\newtheorem{theorem}{Theorem}[section]
\newtheorem{lemma}[theorem]{Lemma}
\newtheorem{proposition}[theorem]{Proposition}
\newtheorem{corollary}[theorem]{Corollary}
\theoremstyle{plain}
\theoremstyle{definition}
\newtheorem{definition}[theorem]{Definition}
\numberwithin{equation}{section}
\renewcommand{\labelenumi}{\textup{(\theenumi)}}
\title{One-sided topological conjugacy of normal subshifts and
 gauge actions on the associated $C^*$-algebras}
\author{Kengo Matsumoto \\
Department of Mathematics \\
Joetsu University of Education \\
Joetsu, 943-8512, Japan
}
\begin{document}
\maketitle

\date{}

\begin{abstract}
The class of normal subshifts includes irreducible nontrivial topological Markov shifts,
 irreducible nontrivial sofic shifts, synchronized systems,
Dyck shifts, $\beta$-shifts, substitution minimal shifts, and so on.
We will characterize one-sided topological conjugacy classes
of normal subshifts in terms of the associated $C^*$-algebras 
and its gauge actions with potentials.
\end{abstract}

{\it Mathematics Subject Classification}:
Primary 37A55, 46L35; Secondary  37B10, 28D20.

Keywords: subshift, sofic shift, topological Markov shift,
$C^*$-algebra, one-sided topological conjugacy,
$\lambda$-graph system.



\newcommand{\R}{\mathbb{R}}
\newcommand{\T}{\mathbb{T}}
\newcommand{\Z}{\mathbb{Z}}
\newcommand{\N}{\mathbb{N}}
\newcommand{\Zp}{{\mathbb{Z}}_+}

\def\LLL{{ {\frak L}^{\lambda(\Lambda)} }}
\def\LLTL{{ {\frak L}^{\lambda(\widetilde{\Lambda})} }}

\def\OA{{ {\mathcal{O}}_A }}
\def\OB{{ {\mathcal{O}}_B }}
\def\DA{{ {\mathcal{D}}_A }}
\def\DB{{ {\mathcal{D}}_B }}

\def\OL{{ {\mathcal{O}}_{\frak L} }}
\def\DL{{ {\mathcal{D}}_{\frak L}}}
\def\AL{{ {\mathcal{A}}_{\frak L}}}
\def\OLmin{{ {\mathcal{O}}_{{\Lambda}^{\operatorname{min}}} }}
\def\LLmin{{ {{\frak L}_\Lambda^{\operatorname{min}}} }}
\def\OLamonemin{{ {\mathcal{O}}_{{\Lambda_1}^{\operatorname{min}}} }}
\def\LLamonemin{{ {{\frak L}_{\Lambda_1}^{\operatorname{min}}} }}
\def\OLamtwomin{{ {\mathcal{O}}_{{\Lambda_2}^{\operatorname{min}}} }}
\def\LLamtwomin{{ {{\frak L}_{\Lambda_2}^{\operatorname{min}}} }}
\def\LLamptwomin{{ {{\frak L}_{\Lambda'_2}^{\operatorname{min}}} }}

\def\DLLamtwomin{{ {\mathcal{D}}_{{\frak L}_{\Lambda_2}^{\operatorname{min}} } }}
\def\DLamtwo{{ {\mathcal{D}}_{{\Lambda_2} } }}
\def\DLamtwomin{{ {\mathcal{D}}_{{\Lambda_2}^{\operatorname{min}}} }}

\def\OLamptwomin{{ {\mathcal{O}}_{{\Lambda'_2}^{\operatorname{min}}} }}
\def\DLLamptwomin{{ {\mathcal{D}}_{{\frak L}_{\Lambda'_2}^{\operatorname{min}}} }}
\def\DLamptwo{{ {\mathcal{D}}_{{\Lambda'_2}  } }}
\def\DLamptwomin{{ {\mathcal{D}}_{{\Lambda'_2}^{\operatorname{min}}} }}
\def\ALLamptwomin{{ {\mathcal{A}}_{{\frak L}_{\Lambda'_2}^{\operatorname{min}}} }}

\def\OhatLamtwomin{{ \widehat{\mathcal{O}}_{{\Lambda'_2}^{\operatorname{min}}} }}
\def\DhatLamtwo{{ \widehat{\mathcal{D}}_{{\Lambda'_2}} }}
\def\DhatLamtwomin{{ \widehat{\mathcal{D}}_{{\Lambda'_2}^{\operatorname{min}}} }}
\def\DhatLLamtwomin{{ 
\widehat{\mathcal{D}}_{{\frak L}_{\Lambda'_2}^{\operatorname{min}}} 
}}

\def\ALLamptwomin{{ {\mathcal{A}}_{{\frak L}_{\Lambda'_2}^{\operatorname{min}}} }}
\def\AhatLLamtwomin{{ 
\widehat{\mathcal{A}}_{{\frak L}_{\Lambda'_2}^{\operatorname{min}}} 
}}

\def\rhoLamtwo{ \rho^{{\Lambda_2}} }
\def\rhoLamptwo{ \rho^{{\Lambda'_2}} }
\def\rhohatLamtwo{{ {\hat{\rho}}^{{\Lambda'_2}} }}
\def\rhohatLamtwomin{{ {\hat{\rho}}^{{\Lambda'_2}^{\operatorname{min}}} }}
\def\rhoLamtwomin{ {\rho}^{{\Lambda_2}^{\operatorname{min}}} }
\def\rhoLamptwomin{ {\rho}^{{\Lambda'_2}^{\operatorname{min}}} }
    
\def\FLamtwomin{{ {\mathcal{F}}_{{\Lambda_2}^{\operatorname{min}}} }}
\def\FhatLamtwomin{{ \widehat{\mathcal{F}}_{{\Lambda'_2}^{\operatorname{min}}} }}
\def\FLLamptwomin{{ {\mathcal{F}}_{{\frak L}_{\Lambda'_2}^{\operatorname{min}}} }}

\def\FL{{{\mathcal{F}}_{\frak L}}}
\def\FLmin{{ {\mathcal{F}}_{{\Lambda}^{\operatorname{min}}} }}

\def\DLmin{{ {\mathcal{D}}_{{\frak L}_\Lambda^{\operatorname{min}}} }}

\newcommand{\K}{\mathcal{K}}
\newcommand{\C}{\mathcal{C}}
\def\DLam{{ {\mathcal{D}}_{\Lambda}}}

\def\L{{\frak L}}

\def\OLL{{ {\cal O}_{\lambda(\Lambda)}  }}
\def\OLTL{{ {\cal O}_{\lambda(\widetilde{\Lambda})}  }}
\def\ALL{{ {\cal A}_{\lambda(\Lambda)}  }}
\def\ALTL{{ {\cal A}_{\lambda(\widetilde{\Lambda})}  }}
\def\DLL{{ {\cal D}_{\lambda(\Lambda)}  }}
\def\DLTL{{ {\cal D}_{\lambda(\widetilde{\Lambda})}  }}

\def\FKL{{ {\cal F}_k^{l} }}
\def\A{{ {\mathcal{A}} }}
\def\D{{ {\mathcal{D}} }}
\def\F{{ {\mathcal{F}} }}
\def\Ext{{{\operatorname{Ext}}}}
\def\Im{{{\operatorname{Im}}}}
\def\Aut{{{\operatorname{Aut}}}}
\def\Ad{{{\operatorname{Ad}}}}
\def\Hom{{{\operatorname{Hom}}}}
\def\Ker{{{\operatorname{Ker}}}}
\def\dim{{{\operatorname{dim}}}}
\def\min{{{\operatorname{min}}}}
\def\id{{{\operatorname{id}}}}
\def\OLF{{{\cal O}_{{\frak L}^{Ch(D_F)}}}}
\def\OLN{{{\cal O}_{{\frak L}^{Ch(D_N)}}}}
\def\OLA{{{\cal O}_{{\frak L}^{Ch(D_A)}}}}
\def\LCHDA{{{{\frak L}^{Ch(D_A)}}}}
\def\LCHDF{{{{\frak L}^{Ch(D_F)}}}}
\def\LCHLA{{{{\frak L}^{Ch(\Lambda_A)}}}}
\def\LWA{{{{\frak L}^{W(\Lambda_A)}}}}


\bigskip

\section{Introduction}
Let $\Sigma$ be a finite set $\{ 1,2,\dots, N\}, N\ge 2$.
We denote by $\Sigma^\N $ the set of right infinite sequences of $\Sigma$ 
with its infinite product topology, so that $\Sigma^\N $  
is homeomorphic to a Cantor discontinuum.
Let $\sigma:\Sigma^\N \longrightarrow \Sigma^\N$ be the shift defined by
$\sigma((x_i)_{i \in \N}) = (x_{i+1})_{i\in \N}$ for $(x_i)_{i\in \N} \in \Sigma^\N$.
It gives rise to a continuous surjection on $\Sigma^\N$.
The topological dynamical system $(\Sigma^\N, \sigma)$ 
is called the one-sided full shift of order $N$.
The two-sided full shift $(\Sigma^\Z, \bar{\sigma})$ 
is similarly defined with the shift homeomorphism
$\bar{\sigma}((x_i)_{i \in \Z}) = (x_{i+1})_{i\in \Z}$ 
for $(x_i)_{i\in \Z} \in \Sigma^\Z$.
A closed $\bar{\sigma}$-invariant subset $\Lambda$ of $\Sigma^\Z$, 
that is $\bar{\sigma}(\Lambda) = \Lambda$, 
yields a topological dynamical system
$(\Lambda, \bar{\sigma}_{\Lambda})$ by the homeomorphism $\bar{\sigma}_{\Lambda}$
restricting $\bar{\sigma}$ to $\Lambda$.
The topological dynamical system  $(\Lambda, \bar{\sigma}_{\Lambda})$
is called a two-sided subshift, or a subshift for brevity.
Its one-sided subshift $(X_\Lambda,\sigma_\Lambda)$ is defined
by the shift space
$
X_\Lambda =\{(a_n)_{n\in \N} \in \Sigma^\N \mid (a_n)_{n \in \Z} \in \Lambda \}
$
with the shift $\sigma_\Lambda = \sigma|_{X_\Lambda}$.  
Two-sided subshift $(\Lambda, \bar{\sigma}_{\Lambda})$ and its one-sided subshift
 $(X_\Lambda,\sigma_\Lambda)$ are often written $\Lambda$ and $X_\Lambda$, 
 respectively for brevity.
The dynamical systems of subshifts are also  called symbolic dynamical systems.
The symbolic dynamical systems form 
a bilding block of the theory of topological dynamical systems.
For general theory of symbolic dynamical systems, 
see the text books \cite{Kitchens}, \cite{Kurka} \cite{LM}.

In symbolic dynamical systems, topological Markov shifts, 
often called shifts of finite type,
form a basic class of subshifts.
For an $N\times N$ matrix $A =[A(i,j)]_{i,j=1,\dots,N}$ with entries in $\{0,1\}$,
the two-sided topological Markov shift $(\Lambda_A,\bar{\sigma}_A)$ 
is defined as a subshift by setting
$$\Lambda_A = \{(a_n)_{n\in \Z} \in \{1,2,\dots,N\}^\Z
\mid A(a_n, a_{n+1}) = 1, n\in \Z\}
$$
and
$\bar{\sigma}_A = \bar{\sigma}|_{\Lambda_A}$. 
Its one-sided shift space $X_{\Lambda_A}$ is denoted by $X_A$.
Cuntz--Krieger in \cite{CK} introduced a class of purely infinite simple $C^*$-algebras
associated with the topological Markov shifts. 
For an $N\times N$ matrix $A =[A(i,j)]_{i,j=1,\dots,N}$ with entries in $\{0,1\}$,
the Cuntz--Krieger algebra $\OA$ is defined by the universal $C^*$-algebra 
generated by $N$ partial isometries $S_1,\dots, S_N$
subject to the relations 
$\sum_{j=1}^N S_j S_j^* =1$ 
and
$S_i^* S_i = \sum_{j=1}^N A(i,j) S_j S_j^*, i=1,2,\dots, N$.
The $C^*$-subalgebra  generated by the projections of the form
$S_{i_1} \cdots S_{i_n}S_{i_n}^*\cdots S_{i_1}^*$, 
$i_1,\dots, i_n \in \{1,2,\dots,N\}$
is naturally isomorphic to the commutative $C^*$-algebra $C(X_A)$ 
of complex valued continuous functions on $X_A$, 
that is denoted by $\DA$. 
The gauge action $\rho^A$ of $\T$ on $\OA$ 
plays a crucial role analyzing the algebra 
$\OA$ that is defined by the automorphisms satisfying 
$\rho^A_t(S_j) = \exp{(2\pi\sqrt{-1}t)} S_j, j=1,2,\dots,N, t \in \R/\Z =\T$.
Among other things, Cuntz--Krieger proved in \cite{CK} 
the following result:
\begin{proposition}[{\cite[Proposition 2.17]{CK}}]\label{prop:CK}
Let $A$ and $B$ be irreducible non-permutation matrices with entries in $\{0,1\}$.
If the one-sided topological Markov shifts 
$(X_A, \sigma_A)$ and $(X_B,\sigma_B)$ 
are topologically conjugate,      
then there exists an isomorphism $\Phi:\OA\longrightarrow \OB$
of $C^*$-algebras such that 
$\Phi(\DA) = \DB$ and $\Phi\circ \rho^A_t = \rho^B_t \circ \Phi, t \in \T$.
\end{proposition}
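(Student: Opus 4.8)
The plan is to realize both $\OA$ and $\OB$ as groupoid $C^*$-algebras of the associated Deaconu--Renault groupoids and to transport the conjugacy to an isomorphism of those groupoids. The irreducibility together with the non-permutation assumption forces $A$ (and $B$) to satisfy Cuntz--Krieger's condition (I), equivalently $X_A$ is infinite, so that $\OA$ is canonically isomorphic to the $C^*$-algebra $C^*(G_A)$ of the \'etale groupoid
$$
G_A = \{ (x,n,y) \in X_A \times \Z \times X_A \mid \exists\, k,l \in \Zp,\ n = k-l,\ \sigma_A^k(x) = \sigma_A^l(y) \}.
$$
Under this identification $\DA$ is the commutative subalgebra $C(X_A)$ of functions supported on the unit space $G_A^{(0)} = X_A$, and the gauge action $\rho^A$ is the dual action of $\T = \widehat{\Z}$ attached to the continuous cocycle $c_A : G_A \to \Z$, $c_A(x,n,y) = n$; indeed $S_j$ is supported where $n=1$, so $\rho^A_t(S_j) = \exp(2\pi\sqrt{-1}t) S_j$ is recovered. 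The same description holds for $B$, so it suffices to produce an isomorphism $G_A \cong G_B$ carrying unit space to unit space and intertwining $c_A$ with $c_B$.

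Let $h : X_A \to X_B$ be a topological conjugacy, so $h$ is a homeomorphism with $h \circ \sigma_A = \sigma_B \circ h$; then $h^{-1}$ is likewise a conjugacy. I would define $\tilde{h} : G_A \to G_B$ by $\tilde{h}(x,n,y) = (h(x), n, h(y))$. The decisive point is that $h$ intertwines all powers of the shift, so $\sigma_A^k(x) = \sigma_A^l(y)$ holds if and only if $\sigma_B^k(h(x)) = \sigma_B^l(h(y))$; consequently $\tilde{h}$ lands in $G_B$ with the very same value $n = k-l$, is a homeomorphism (its inverse built from $h^{-1}$ in the same way), and respects the groupoid operations since these are inherited from composition of shift iterates. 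By construction $\tilde{h}$ restricts to $h$ on the unit spaces and satisfies $c_B \circ \tilde{h} = c_A$.

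From $\tilde{h}$ one obtains the induced $*$-isomorphism $\Phi = \tilde{h}_* : C^*(G_A) \to C^*(G_B)$, that is $\Phi : \OA \to \OB$. Since $\tilde{h}$ maps $G_A^{(0)}$ onto $G_B^{(0)}$ via $h$, the restriction of $\Phi$ to $C(X_A)$ is the pullback $f \mapsto f \circ h^{-1}$, whence $\Phi(\DA) = \DB$. Since $c_B \circ \tilde{h} = c_A$, the $\Z$-grading carried by the cocycle is preserved, and passing to the dual $\T$-actions yields $\Phi \circ \rho^A_t = \rho^B_t \circ \Phi$ for every $t \in \T$, which completes the argument.

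I expect the main obstacle to be the verification that the gauge actions are intertwined \emph{exactly}, and this is precisely where one-sided topological conjugacy is indispensable: the equality $c_B \circ \tilde{h} = c_A$ rests on $h$ commuting with $\sigma_A$ and $\sigma_B$ on the nose. Were $h$ merely a continuous orbit equivalence, the shift powers would match only after adding continuous cocycle terms, so $c_A$ and $c_B \circ \tilde{h}$ would differ by a coboundary and the gauge actions would agree only up to a potential --- exactly the refinement that the present paper develops for general normal subshifts. A secondary technical point is to justify the identification $\OA \cong C^*(G_A)$ through condition (I), using amenability of $G_A$ so that the universal Cuntz--Krieger algebra coincides with the groupoid $C^*$-algebra and $\Phi$ is genuinely an isomorphism of $\OA$ onto $\OB$.
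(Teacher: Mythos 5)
Your proof is correct, and although the paper itself offers no proof of this proposition (it is quoted from Cuntz--Krieger), your groupoid--cocycle argument is essentially the same strategy the paper runs in Section~3 for the general normal-subshift version: transport the conjugacy $h$ to an isomorphism of Deaconu--Renault groupoids preserving the unit space and the canonical cocycle $c(x,n,y)=n$, then pass to the $C^*$-level to get a diagonal-preserving, gauge-equivariant isomorphism. Your version is just the special case of that machinery where the covering space $X_{\frak L}$ collapses to $X_A$ and the cocycle correction terms $k_1,l_1$ are trivial, and your closing remarks correctly identify where exact conjugacy (as opposed to orbit equivalence) is used.
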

The two-sided analogue of the implication above was also proved in 
\cite[Theorem 3.8]{CK}, and its converse was proved by
T. M. Carlsen--J. Rout \cite{CR} (cf. \cite{BC2019}) for more general setting. 
One-sided topological Markov shifts 
$(X_A, \sigma_A)$ and $(X_B,\sigma_B)$ 
are said to be eventually conjugate (\cite{MaPAMS2017}) 
if there exist a homeomorphism
$h:X_A\longrightarrow X_B$ and a nonnegative integer $K$ 
such that 
\begin{align*}
\sigma_B^K(h(\sigma_A(a)))=&  \sigma_B^{K+1}(h(a)), \qquad a \in X_A, \\
\sigma_A^K(h^{-1}(\sigma_B(b))) =&  \sigma_A^{K+1}(h^{-1}(b)), \qquad b \in X_B.\\
\end{align*}
For the converse implication of Proposition \ref{prop:CK},
the author proved the following result:
\begin{proposition}[{\cite{MaPAMS2017}}]\label{prop:MaPAMS2017}
Let $A$ and $B$ be irreducible non-permutation matrices with entries in $\{0,1\}$.
The one-sided topological Markov shifts 
$(X_A, \sigma_A)$ and $(X_B,\sigma_B)$ 
are eventually conjugate
if and only if there exists an isomorphism $\Phi:\OA\longrightarrow \OB$
of $C^*$-algebras such that 
$\Phi(\DA) = \DB$ and $ \Phi \circ \rho^A_t = \rho^B_t \circ \Phi, t \in \T$.
\end{proposition}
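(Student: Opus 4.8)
The plan is to pass to groupoid models of the two Cuntz--Krieger algebras and to read off eventual conjugacy as exact preservation of the canonical integer-valued cocycle. Realize $\OA$ as the groupoid $C^*$-algebra $C^*(G_A)$ of the Deaconu--Renault groupoid
\[
G_A = \{ (x,n,y) \in X_A \times \Z \times X_A \mid \exists\, k,l \ge 0,\ n = k-l,\ \sigma_A^k(x) = \sigma_A^l(y) \},
\]
an \'etale groupoid with unit space $G_A^{(0)} = X_A$, under which $\DA$ becomes $C(X_A) = C(G_A^{(0)})$ and the gauge action $\rho^A$ becomes the action dual to the continuous cocycle $c_A(x,n,y) = n$. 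Because $A$ is irreducible and not a permutation matrix, $X_A$ is infinite and $G_A$ is effective (topologically principal); this is precisely the hypothesis that makes the reconstruction machinery for diagonal-preserving isomorphisms applicable. The same conventions fix $G_B$, $c_B$.

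For the harder ``if'' direction, suppose $\Phi : \OA \to \OB$ satisfies $\Phi(\DA) = \DB$ and $\Phi \circ \rho^A_t = \rho^B_t \circ \Phi$. Restricting $\Phi$ to the diagonals gives an isomorphism $C(X_A) \cong C(X_B)$, hence a homeomorphism $h : X_A \to X_B$. Since both groupoids are effective, Renault's reconstruction theorem shows that the diagonal-preserving isomorphism $\Phi$ is spatially implemented by a topological groupoid isomorphism $\Psi : G_A \to G_B$ whose restriction to unit spaces is $h$. The intertwining relation says $\Phi$ preserves the $\Z$-gradings, so $\Psi$ preserves the cocycles \emph{exactly}, $c_B \circ \Psi = c_A$, forcing $\Psi(x,n,y) = (h(x), n, h(y))$. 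Evaluating on the clopen bisection $\{ (x,1,\sigma_A(x)) : x \in X_A \}$ (the graph of $\sigma_A$, lying in $c_A^{-1}(1)$), membership of $(h(x),1,h(\sigma_A(x)))$ in $G_B$ means there are $q \ge 0$ with $\sigma_B^{q+1}(h(x)) = \sigma_B^{q}(h(\sigma_A(x)))$. Its image is a compact subset of $c_B^{-1}(1) = \bigcup_{q \ge 0} \{ g \mid \sigma_B^{q+1}(r(g)) = \sigma_B^{q}(s(g)) \}$, an increasing union of open sets, so compactness yields a single $K$ with $\sigma_B^{K}(h(\sigma_A(x))) = \sigma_B^{K+1}(h(x))$ for all $x$. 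Applying the same argument to $\Psi^{-1}$ gives the companion identity, so $(h,K)$ is an eventual conjugacy.

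For the ``only if'' direction, suppose $h$ and $K$ implement an eventual conjugacy. Iterating $\sigma_B^{K}(h(\sigma_A(a))) = \sigma_B^{K+1}(h(a))$ gives $\sigma_B^{K}(h(\sigma_A^m(a))) = \sigma_B^{K+m}(h(a))$ for all $m \ge 0$, whence, for any $(x,n,y) \in G_A$ with $\sigma_A^k(x) = \sigma_A^l(y)$, one gets $\sigma_B^{K+k}(h(x)) = \sigma_B^{K+l}(h(y))$ and so $(h(x), n, h(y)) \in G_B$. Thus $\Psi(x,n,y) := (h(x), n, h(y))$ is a well-defined, cocycle-preserving groupoid homomorphism, the $h^{-1}$ half of the eventual conjugacy furnishes its inverse, and the induced isomorphism $\Phi := \Psi_{*} : C^*(G_A) \to C^*(G_B)$ carries $\DA$ onto $\DB$ (it fixes unit spaces) and intertwines the gauge actions (it preserves the cocycles). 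This direction is essentially a verification.

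The main obstacle is the ``if'' direction, and it splits into two genuinely nontrivial points. First, one must know that an abstract diagonal-preserving isomorphism is implemented by a groupoid isomorphism; this is where effectiveness of $G_A$ and $G_B$ --- guaranteed by the irreducible non-permutation hypothesis through the Cuntz--Krieger condition (II) --- is indispensable, and without it the conclusion can fail. Second, one must manufacture the single uniform integer $K$ from the pointwise cocycle data, which is exactly the compactness argument applied to the image of the graph of $\sigma_A$; this is the step that converts strict cocycle preservation at the groupoid level into \emph{eventual} rather than exact conjugacy of the one-sided shifts. A more hands-on alternative, avoiding groupoids, would reconstruct $h$ directly from the Gelfand spectrum of $\Phi|_{\DA}$ and extract $K$ from continuity and norm estimates on the images $\Phi(S_j)$ of the generators, but the bookkeeping there is considerably heavier.
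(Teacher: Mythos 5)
Your argument is essentially correct, but note that the paper itself does not prove this proposition: it is quoted verbatim from \cite{MaPAMS2017} as background, and the closest the paper comes to proving it is the generalization to normal subshifts carried out in Sections 3 and 4. Your route is the groupoid-reconstruction one in the style of Carlsen--Rout \cite{CR}: for the hard direction you invoke Renault's theorem to promote the diagonal-preserving isomorphism $\Phi$ to an isomorphism $\Psi$ of the Deaconu--Renault groupoids, use gauge-equivariance to force $c_B\circ\Psi=c_A$, and then extract the uniform delay $K$ by a compactness argument on the image of the graph of $\sigma_A$ inside the increasing open cover of $c_B^{-1}(1)$; all of these steps are sound, and the easy direction (building $\Psi(x,n,y)=(h(x),n,h(y))$ from an eventual conjugacy) matches the specialization of the paper's Lemma 3.2 to constant $k_1=K$, $l_1=K+1$. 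By contrast, the original proof in \cite{MaPAMS2017}, and the paper's own treatment of the analogous converse for normal subshifts (Lemmas 4.1--4.2), avoid the Weyl-groupoid reconstruction theorem in the hard direction and instead work directly with the generating partial isometries: one shows that an automorphism fixing the diagonal pointwise is implemented by a unitary $U_\alpha=\sum_j\alpha(S_j)S_j^*$ in the larger diagonal, and reads off the orbit-equivalence data from $\Phi(S_i)$. Your approach buys conceptual transparency and portability to other graded groupoid $C^*$-algebras at the cost of importing the Cartan-subalgebra machinery; the generator-level approach is more elementary and self-contained, which is presumably why the paper follows it in Section 4. Two small points: the relevant Cuntz--Krieger hypothesis ensuring that $\DA$ is Cartan (equivalently that $G_A$ is topologically principal) is condition (I), not condition (II); and in the compactness step you should record that the graph $\{(x,1,\sigma_A(x)):x\in X_A\}$ is compact because it is the finite union over symbols $j$ of the compact open bisections sitting over the cylinders $\{x: x_1=j\}$, on which $\sigma_A$ is injective.
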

In \cite{BC2017}, T. M. Brix--Carlsen found an example of a pair of irreducible Markov shifts 
that are eventually conjugate but not topologically conjugate.
For an integer valued continuous map $f \in C(X_A,\Z)$ on $X_A$, 
the gauge action $\rho^{A,f}_t$ with potential $f$
is defined by the automorphisms satisfying 
$\rho^{A,f}_t(S_i) = \exp{(2 \pi \sqrt{-1} f t)}S_i, i=1,2,\dots, N$.
The action $\rho^{A,f}$ with potential $f\equiv 1$ coincides with the gauge action
$\rho^A$.
 As a complete characterization of topological conjugcy of one-sided topological Markov shifts
in terms of their Cuntz--Krieger algebras  and its gauge actions,
the author recently proved the following result:
\begin{proposition}[{\cite{MaPre2020b}}]\label{prop:MaPre2020b}
Let $A$ and $B$ be irreducible non-permutation matrices with entries in $\{0,1\}$.
The one-sided topological Markov shifts 
$(X_A, \sigma_A)$ and $(X_B,\sigma_B)$ 
are topologically  conjugate
if and only if 
there exists an isomorphism $\Phi:\OA\longrightarrow \OB$
of $C^*$-algebras such that 
$\Phi(\DA) = \DB$ and 
$$
\Phi \circ \rho^{A, f}_t= \rho^{B, \Phi(f)}_t \circ \Phi 
\qquad \text{ for all } f \in C(X_A,\Z), \, \, t \in \T.
$$
\end{proposition}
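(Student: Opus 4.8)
The plan is to pass to the groupoid model of the Cuntz--Krieger algebras and read the intertwining condition as a statement about continuous cocycles. Recall that $\OA$ is canonically isomorphic to the groupoid $C^*$-algebra of the Deaconu--Renault groupoid
$$
G_A = \{ (x,n,y) \in X_A\times\Z\times X_A : \exists\, k,l\ge 0,\ n=k-l,\ \sigma_A^k(x)=\sigma_A^l(y)\},
$$
under which $\DA$ corresponds to $C(X_A)=C(G_A^{(0)})$. For $f\in C(X_A,\Z)$ the gauge action $\rho^{A,f}$ is exactly the $\T$-action determined by the continuous cocycle
$$
c_f^A(x,n,y)=\sum_{j=0}^{k-1} f(\sigma_A^j x)-\sum_{j=0}^{l-1} f(\sigma_A^j y)\qquad(\sigma_A^k x=\sigma_A^l y,\ n=k-l),
$$
which is well defined since the two orbits agree past their meeting point; in particular $c_f^A(x,1,\sigma_A x)=f(x)$. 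Thus $\Phi\circ\rho^{A,f}_t=\rho^{B,\Phi(f)}_t\circ\Phi$ for all $f$ is equivalent to saying that the groupoid isomorphism underlying $\Phi$ carries $c_f^A$ to $c_{\Phi(f)}^B$.

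For the forward implication I would start from a topological conjugacy $h\colon X_A\to X_B$, so $h\circ\sigma_A^j=\sigma_B^j\circ h$ for all $j$, and set $\psi(x,n,y)=(h(x),n,h(y))$. This is a well-defined isomorphism $G_A\to G_B$, and using $\sigma_B^j h=h\sigma_A^j$ one checks directly that $c_{f\circ h^{-1}}^B(\psi(x,n,y))=c_f^A(x,n,y)$ for every $f\in C(X_A,\Z)$. The induced isomorphism $\Phi=\psi_*\colon\OA\to\OB$ then satisfies $\Phi(\DA)=\DB$, $\Phi(f)=f\circ h^{-1}$ on $\DA$, and the full intertwining of all potential gauge actions.

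For the converse, assume $\Phi$ is given with $\Phi(\DA)=\DB$ and the full intertwining. Taking $f\equiv 1$ recovers the standard gauge action, so by Proposition \ref{prop:MaPAMS2017} the shifts are eventually conjugate: the homeomorphism $h$ induced by $\Phi|_{\DA}$ (with $\Phi(f)=f\circ h^{-1}$) together with some $K\ge 0$ satisfies $\sigma_B^{K}(h(\sigma_A x))=\sigma_B^{K+1}(h(x))$ for all $x$. Since $A$ is irreducible and non-permutation, $G_A$ is effective, so by the reconstruction theorem $\Phi$ is induced by an isomorphism $\psi$ with $\psi|_{G_A^{(0)}}=h$, and the full intertwining forces $c_f^A=c_{f\circ h^{-1}}^B\circ\psi$ for all $f$. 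Evaluating at $\tau_x=(x,1,\sigma_A x)$, whose image is $\psi(\tau_x)=(h(x),1,h(\sigma_A x))$ realized by the witness $\sigma_B^{K+1}(h x)=\sigma_B^K(h\sigma_A x)$, yields
$$
f(x)=\sum_{j=0}^{K} f\bigl(h^{-1}\sigma_B^j h x\bigr)-\sum_{j=0}^{K-1} f\bigl(h^{-1}\sigma_B^j h\sigma_A x\bigr)\qquad\text{for all } f\in C(X_A,\Z).
$$
As $C(X_A,\Z)$ separates points, this is an identity of integer atomic measures; cancelling the common term $\delta_x$ (the $j=0$ summand of the first sum) against the left side gives the multiset identity $\{\sigma_B^j hx: 1\le j\le K\}=\{\sigma_B^j h\sigma_A x: 0\le j\le K-1\}$, so $h(\sigma_A x)=\sigma_B^{j_0}(hx)$ for some $1\le j_0\le K$, whence $\sigma_B^{K+1}(hx)=\sigma_B^{K+j_0}(hx)$.

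To finish, note that whenever $hx$ is not eventually periodic this last equality forces $j_0=1$, i.e. $\sigma_B(hx)=h(\sigma_A x)$; and the case $K=0$ already gives this directly. Because an irreducible non-permutation shift of finite type is an infinite Cantor system whose eventually periodic points are countable, the non-eventually-periodic points are dense in $X_B$, so $\sigma_B\circ h=h\circ\sigma_A$ holds on a dense set and hence everywhere by continuity, making $h$ a topological conjugacy. I expect the main obstacle to be the converse: one must justify the reconstruction of $\psi$ from $\Phi(\DA)=\DB$ and the exact correspondence between the potential gauge actions and the cocycles $c_f$, and must verify that $\psi(\tau_x)$ is realized precisely through the eventual-conjugacy window, so that the displayed measure identity is the one needed; once this is secured, the density of non-eventually-periodic points closes the argument cleanly.
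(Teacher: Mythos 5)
Your overall strategy coincides with the one this paper uses to prove its generalization, Theorem \ref{thm:main} (the proposition itself is only quoted here from \cite{MaPre2020b}): the forward direction via the groupoid isomorphism $\psi(x,n,y)=(h(x),n,h(y))$ and the correspondence of the cocycles $c_f$, and the converse by first reducing to eventual conjugacy with $f\equiv 1$ via Proposition \ref{prop:MaPAMS2017}, then extracting the identity $f(x)=\sum_{j=0}^{K}f(h^{-1}\sigma_B^{j}hx)-\sum_{j=0}^{K-1}f(h^{-1}\sigma_B^{j}h\sigma_A x)$ and running the multiset and eventually-periodic-point argument; this last step is essentially the paper's final lemma of Section 4 verbatim, and your handling of the $K=0$ case and of density of non-eventually-periodic points is correct. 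Where you genuinely diverge is the passage from the operator-algebraic intertwining to the cocycle identity $c_f^A=c^B_{f\circ h^{-1}}\circ\psi$: you invoke the Weyl-groupoid (Cartan) reconstruction theorem, which is legitimate here because $\DA$ is a Cartan subalgebra of $\OA$ for an irreducible non-permutation matrix, but reconstruction determines $\Phi$ only as $\psi_*$ composed with an automorphism coming from a continuous $\T$-valued cocycle on $G_A$ fixing $\DA$ pointwise; since such an automorphism commutes with every $\rho^{A,f}_t$, the cocycle identity survives, and this is precisely the obstacle you flagged without closing. The paper avoids reconstruction altogether: it builds a second isomorphism $\Phi_1$ directly from the eventual conjugacy (Lemma \ref{lemma:3.8}), observes that $\Phi^{-1}\circ\Phi_1$ fixes the small diagonal pointwise, and shows by hand (Lemmas \ref{lem:alpha} and \ref{lem:ualpha}) that such an automorphism is implemented by a unitary in the larger diagonal, hence commutes with all potential gauge actions. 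That route buys generality: for normal subshifts $\DLam$ is not maximal abelian in $\OLmin$ (its relative commutant is $\DLmin$), so the Cartan reconstruction you rely on would not apply beyond the Cuntz--Krieger case, whereas your argument, once the twist issue is spelled out, is a clean and slightly shorter proof of the proposition as stated.
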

In \cite{MaPre2020a}, 
the author introduced a class of irreducible subshifts called normal subshifts.
The class of normal subshifts includes the class of 
 irreducible nontrivial shifts of finite type,
 irreducible nontrivial sofic shifts, synchronizing systems, 
Dyck shifts, Motzkin shfts, primitive substitution subshifts, $\beta$-shifts, and so on.
A lot of important  subshifts are contained in the class of normal subshifts. 
A normal subshift $\Lambda$ 
has a unique minimal presentation $\LLmin$ of $\lambda$-graph system
that is a generalization of  the Fischer cover graphs for sofic shifts.
A $\lambda$-graph system $\L = (V, E, \lambda, \iota)$ introduced in \cite{MaDocMath1999}
is a labeled Bratteli diagram $(V, E, \lambda)$ equipping with
a shift operation $\iota$, 
and it yields a $C^*$-algebra  written $\OL$ as in \cite{MaDocMath2002}.
For a normal subshift $\Lambda$ and its minimal presentation $\LLmin$ of $\lambda$-graph system,
the author studied in \cite{MaPre2020a} the associated $C^*$-algebra $\mathcal{O}_{\LLmin}$ 
that is written as 
$\OLmin$.
The $C^*$-algebras are generalization of Cuntz--Krieger algebras.
The algebra $\OLmin$ contains the commutative $C^*$-algebra $\D_{\Lambda}$
isomorphic to $C(X_\Lambda)$ as a subalgebra of $\OLmin$, 
and has a natural gauge actin $\rho^\Lambda$ of $\T$.   
As a generalization of Proposition \ref{prop:MaPAMS2017},
 the following result was proved in \cite{MaPre2020a}.
\begin{proposition}[{\cite{MaPre2020a}}]\label{prop:MaPre2020a}
Let $\Lambda_1$ and $\Lambda_2$ be normal subshifts.
The one-sided  subshifts
$(X_{\Lambda_1},\sigma_{\Lambda_1})$ 
and 
$(X_{\Lambda_2},\sigma_{\Lambda_2})$ 
are eventually conjugate
if and only if 
there exists an isomorphism
$\Phi:\OLamonemin\longrightarrow\OLamtwomin$ 
of  $C^*$-algebras such that 
$\Phi({\mathcal{D}}_{\Lambda_1}) ={\mathcal{D}}_{\Lambda_2}$
and
\begin{equation*}
\Phi\circ\rho^{\Lambda_1}_t = \rho^{\Lambda_2}_t\circ\Phi
\qquad
t \in \T.
\end{equation*}
\end{proposition}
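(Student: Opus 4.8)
The plan is to model both $C^*$-algebras as groupoid $C^*$-algebras and to translate the analytic hypotheses into a statement about the underlying \'etale groupoids, exactly as in the Cuntz--Krieger case underlying Proposition \ref{prop:MaPAMS2017}. Specifically, for a normal subshift $\Lambda$ I would realize $\mathcal{O}_{\LLmin}$ as $C^*(G_\Lambda)$, where $G_\Lambda$ is a Deaconu--Renault-type \'etale groupoid built over the one-sided shift space $X_\Lambda$ from the minimal presentation $\LLmin$, so that the commutative subalgebra $\mathcal{D}_\Lambda \cong C(X_\Lambda)$ is identified with $C(G_\Lambda^{(0)})$ and the gauge action $\rho^\Lambda$ is the dual action of the canonical $\Z$-valued continuous cocycle $c_\Lambda : G_\Lambda \to \Z$ that records the difference of shift lengths. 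The first, and I expect the main, task is to establish this groupoid picture and to prove that $G_\Lambda$ is topologically principal (effective); the normality hypothesis on $\Lambda$ should be exactly what guarantees effectiveness, playing the role that the non-permutation/aperiodicity (condition (I)) assumption plays for $\OA$.

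Granting the groupoid model, I would prove the backward implication first. Suppose $\Phi : \OLamonemin \to \OLamtwomin$ is an isomorphism with $\Phi(\mathcal{D}_{\Lambda_1}) = \mathcal{D}_{\Lambda_2}$ and $\Phi \circ \rho^{\Lambda_1}_t = \rho^{\Lambda_2}_t \circ \Phi$ for all $t \in \T$. Since each $G_{\Lambda_i}$ is topologically principal, Renault's reconstruction theorem recovers the groupoid from the pair $(C^*(G_{\Lambda_i}), C(G_{\Lambda_i}^{(0)}))$; because $\Phi$ carries diagonal to diagonal it induces a homeomorphism $h : X_{\Lambda_1} \to X_{\Lambda_2}$ of unit spaces together with an isomorphism $\varphi : G_{\Lambda_1} \to G_{\Lambda_2}$ of \'etale groupoids extending $h$. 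The gauge-equivariance of $\Phi$ (for the single action $\rho$, i.e. the cocycle with potential $\equiv 1$) forces $\varphi$ to intertwine the canonical cocycles, $c_{\Lambda_2} \circ \varphi = c_{\Lambda_1}$. It then remains to unwind this cocycle-preserving isomorphism into the eventual-conjugacy relations: elements of $G_{\Lambda_i}$ of cocycle value $0$ correspond to pairs that become equal after equally many applications of the shift, and tracking a uniform bound on the shift-lengths needed to witness membership in $\varphi$ produces the lag $K$ in the identities $\sigma_{\Lambda_2}^K(h(\sigma_{\Lambda_1}(a))) = \sigma_{\Lambda_2}^{K+1}(h(a))$ and its inverse.

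For the forward implication, suppose the one-sided subshifts are eventually conjugate via $h$ with lag $K$. I would define a map of groupoids by sending a shift-tail-equivalence triple $(x,n,y) \in G_{\Lambda_1}$ to the triple over $(h(x),h(y))$ carrying the same cocycle value $n$, using the eventual-conjugacy identities (with their lag $K$) to verify that shift-tail-equivalence with a prescribed shift-length difference is preserved by $h$, so that $\varphi : G_{\Lambda_1} \to G_{\Lambda_2}$ is a well-defined continuous isomorphism restricting to $h$ on units and satisfying $c_{\Lambda_2} \circ \varphi = c_{\Lambda_1}$. Functoriality of the groupoid $C^*$-algebra construction then yields an isomorphism $\Phi = \varphi_* : \OLamonemin \to \OLamtwomin$; it automatically satisfies $\Phi(\mathcal{D}_{\Lambda_1}) = \mathcal{D}_{\Lambda_2}$ because $\varphi$ maps unit space onto unit space via $h$, and it intertwines the gauge actions because $\varphi$ preserves the cocycles.

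The hard part throughout is the groupoid realization of $\mathcal{O}_{\LLmin}$ together with the effectiveness of $G_\Lambda$: unlike the finite-type case, $\mathcal{O}_{\LLmin}$ is defined from the inverse system of vertex sets and shift maps of the minimal $\lambda$-graph presentation rather than from $(X_\Lambda,\sigma_\Lambda)$ alone, so identifying its canonical diagonal with $C(X_\Lambda)$, exhibiting the Deaconu--Renault groupoid, and extracting from normality the aperiodicity needed for topological principality are the substantive steps. The secondary delicate point is the precise bookkeeping of shift-lengths that converts the cocycle condition $c_{\Lambda_2} \circ \varphi = c_{\Lambda_1}$ into a single uniform lag $K$; once the groupoid framework and the reconstruction theorem are in place, the two implications reduce to a translation between cocycle-preserving groupoid isomorphisms and eventual conjugacy, paralleling the Markov case.
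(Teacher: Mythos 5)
Your overall strategy (diagonal-preserving groupoid reconstruction plus preservation of the canonical cocycle) is the right one and is indeed the route of \cite{MaPre2020a}, but there is a genuine gap at the very first step: you realize $\OLmin$ over a groupoid whose unit space is $X_\Lambda$ and identify $\mathcal{D}_\Lambda\cong C(X_\Lambda)$ with $C(G^{(0)})$. For a general normal subshift this is false. The algebra $\OLmin$ is $C^*_r(G_{\LLmin})$ for the Deaconu--Renault groupoid of the shift on the path space $X_{\LLmin}$ of the minimal $\lambda$-graph system; its unit space is $X_{\LLmin}$, the canonical Cartan subalgebra is $\DLmin\cong C(X_{\LLmin})$, and $\DLam\cong C(X_\Lambda)$ is a \emph{proper} subalgebra of it whenever the factor map $\pi_{\LLmin}:X_{\LLmin}\to X_\Lambda$ fails to be injective (which happens as soon as $\Lambda$ is not of finite type). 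In particular $\DLam$ is not maximal abelian --- its relative commutant is $\DLmin$ by Proposition \ref{prop:2.4} --- so Renault's reconstruction theorem cannot be applied to the pair $(\OLmin,\DLam)$ as you propose. The repair is to pass to relative commutants first: $\Phi(\mathcal{D}_{\Lambda_1})=\mathcal{D}_{\Lambda_2}$ forces $\Phi(\mathcal{D}_{{\frak L}_{\Lambda_1}^{\min}})=\mathcal{D}_{{\frak L}_{\Lambda_2}^{\min}}$, reconstruction then yields a cocycle-preserving isomorphism $G_{{\frak L}_{\Lambda_1}^{\min}}\to G_{{\frak L}_{\Lambda_2}^{\min}}$ restricting to a homeomorphism $h_{\L}$ of the \emph{path spaces}, and only then does the hypothesis on the smaller subalgebras produce a homeomorphism $h_\Lambda:X_{\Lambda_1}\to X_{\Lambda_2}$ satisfying $\pi_{{\frak L}_2}\circ h_{\L}=h_\Lambda\circ\pi_{{\frak L}_1}$, from which the eventual conjugacy of the subshifts themselves is extracted. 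This two-level bookkeeping (an equivalence of the $X_{\L}$'s lying over an equivalence of the $X_\Lambda$'s) is exactly the structure formalized in Section 3 of the present paper and is not visible in your proposal.

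The forward direction has the mirror-image gap: an eventual conjugacy $h$ between $(X_{\Lambda_1},\sigma_{\Lambda_1})$ and $(X_{\Lambda_2},\sigma_{\Lambda_2})$ does not by itself define a map of the groupoids $G_{{\frak L}_{\Lambda_i}^{\min}}$, since these live over the path spaces rather than over the shift spaces; one must first lift $h$ to a homeomorphism $h_{\L}:X_{{\frak L}_{\Lambda_1}^{\min}}\to X_{{\frak L}_{\Lambda_2}^{\min}}$ covering it. That lifting is a substantive step which uses normality and the canonical (minimal, predecessor-separated) nature of $\LLmin$ --- it is the content of the $({\frak L}_1,{\frak L}_2)$-conjugacy results of \cite{MaPre2020a} invoked in the proof of Theorem \ref{thm:main1} --- and is not automatic. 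Once this two-level framework is in place, your cocycle bookkeeping and the compactness argument producing a uniform lag $K$ do go through essentially as you describe.
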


In this paper, we will prove the following theorem generalizing 
Proposition \ref{prop:MaPre2020b}.
It characterizes one-sided topological conjugacy of normal subshifts
in terms of the associated $C^*$-algebras $\OLmin$ and its gauge actions with potentials.
\begin{theorem}
\label{thm:main}
Let $\Lambda_1$ and $\Lambda_2$ be normal subshifts.
The following assertions are equivalent. 
\begin{enumerate}
\renewcommand{\theenumi}{\roman{enumi}}
\renewcommand{\labelenumi}{\textup{(\theenumi)}}
\item
Their one-sided  subshifts
$(X_{\Lambda_1},\sigma_{\Lambda_1})$ 
and 
$(X_{\Lambda_2},\sigma_{\Lambda_2})$ 
are topologically conjugate.
\item
There exists an isomorphism
$\Phi:\OLamonemin\longrightarrow\OLamtwomin$ 
of  $C^*$-algebras such that 
$\Phi({\mathcal{D}}_{\Lambda_1}) ={\mathcal{D}}_{\Lambda_2}$
and
\begin{equation*}
\Phi\circ\rho^{\Lambda_1, f}_t = \rho^{\Lambda_2,\Phi(f)}_t\circ\Phi
\qquad
\text{ for all } \quad f \in C(X_{\Lambda_1},\Z), \quad t \in \T,
\end{equation*}
\end{enumerate}
\end{theorem}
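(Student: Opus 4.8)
The plan is to establish both implications through the étale groupoid models of the algebras, using Proposition~\ref{prop:MaPre2020a} as a backbone and isolating the extra rigidity contributed by the potentials. Since $\Lambda_1,\Lambda_2$ are normal, each $\mathcal{O}_{\Lambda_i^{\min}}$ is the $C^*$-algebra of a topologically principal étale groupoid $G_{\Lambda_i}$ with unit space $X_{\Lambda_i}$ and canonical diagonal $\mathcal{D}_{\Lambda_i}\cong C(X_{\Lambda_i})$, realized as the Deaconu--Renault-type groupoid $G_{\Lambda_i}=\{(u,k-l,v): k,l\ge 0,\ \sigma^k u=\sigma^l v\}$. A potential $f\in C(X_{\Lambda_i},\Z)$ determines the continuous cocycle
\[
c^i_f(u,k-l,v)=\sum_{j=0}^{k-1}f(\sigma^j u)-\sum_{j=0}^{l-1}f(\sigma^j v),
\]
which does not depend on the witnessing pair $(k,l)$, and $\rho^{\Lambda_i,f}$ is the gauge action determined by $c^i_f$; the choice $f\equiv 1$ returns the degree cocycle and the standard gauge action. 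By the reconstruction theory for diagonal-preserving isomorphisms, a $\Phi$ as in (ii) is induced by a groupoid isomorphism $\varphi:G_{\Lambda_1}\to G_{\Lambda_2}$ whose restriction to unit spaces is a homeomorphism $h:X_{\Lambda_1}\to X_{\Lambda_2}$ with $\Phi(f)=f\circ h^{-1}$, and the intertwining relation in (ii) is equivalent to $c^2_{f\circ h^{-1}}\circ\varphi=c^1_f$ for every $f$.

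For (i)$\Rightarrow$(ii) I would start from a topological conjugacy $h$, so $h\circ\sigma_{\Lambda_1}=\sigma_{\Lambda_2}\circ h$. Then $\sigma_{\Lambda_1}^k u=\sigma_{\Lambda_1}^l v$ holds if and only if $\sigma_{\Lambda_2}^k h(u)=\sigma_{\Lambda_2}^l h(v)$, so $\varphi(u,m,v):=(h(u),m,h(v))$ is a well-defined isomorphism $G_{\Lambda_1}\to G_{\Lambda_2}$ over $h$. Using $h^{-1}\circ\sigma_{\Lambda_2}^j\circ h=\sigma_{\Lambda_1}^j$, the two Birkhoff-type sums defining $c^2_{f\circ h^{-1}}(\varphi(u,m,v))$ and $c^1_f(u,m,v)$ agree term by term, so $\varphi$ preserves all the cocycles $c_f$. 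By functoriality of groupoid $C^*$-algebras the induced $\ast$-isomorphism $\Phi$ then satisfies $\Phi(\mathcal{D}_{\Lambda_1})=\mathcal{D}_{\Lambda_2}$ and the full intertwining of gauge actions with potentials, which is (ii).

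For (ii)$\Rightarrow$(i), given $\Phi$ I pass to $\varphi$ and $h$. Taking $f\equiv 1$ shows $\varphi$ preserves the degree cocycle, so by Proposition~\ref{prop:MaPre2020a} the map $h$ is an eventual conjugacy and $\varphi$ sends degree-one elements to degree-one elements. The heart of the argument is to promote this to an honest conjugacy using all potentials. I would evaluate the cocycle identity on the canonical degree-one element $\gamma_x=(x,1,\sigma_{\Lambda_1}x)$: degree preservation forces $\varphi(\gamma_x)=(h(x),1,h(\sigma_{\Lambda_1}x))$, and choosing a witness $\sigma_{\Lambda_2}^p h(x)=\sigma_{\Lambda_2}^{p-1}h(\sigma_{\Lambda_1}x)$, the identity $c^2_{f\circ h^{-1}}(\varphi(\gamma_x))=c^1_f(\gamma_x)=f(x)$, valid for all $f\in C(X_{\Lambda_1},\Z)$, translates into the signed-measure equation
\[
\sum_{j=0}^{p-1}\delta_{h^{-1}\sigma^j h(x)}-\sum_{j=0}^{p-2}\delta_{h^{-1}\sigma^j h(\sigma_{\Lambda_1}x)}=\delta_x .
\]
Because $C(X_{\Lambda_1},\Z)$ contains the indicators of all clopen sets and hence separates points of the Cantor set $X_{\Lambda_1}$, this equation holds as an identity of signed measures; cancelling the $j=0$ atom on the left, which is $\delta_x$, leaves a forced matching of the remaining atoms, and on the dense set of points without spurious periodicity this matching yields $h(\sigma_{\Lambda_1}x)=\sigma_{\Lambda_2}h(x)$. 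By continuity of $h,\sigma_{\Lambda_1},\sigma_{\Lambda_2}$ the relation $h\circ\sigma_{\Lambda_1}=\sigma_{\Lambda_2}\circ h$ holds everywhere, and symmetrically for $h^{-1}$, so $h$ is a topological conjugacy, which is (i).

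I expect the principal obstacle to lie in the faithful passage between the algebraic data of $\mathcal{O}_{\Lambda^{\min}}$ and the groupoid-cocycle data: unlike the Cuntz--Krieger case of Proposition~\ref{prop:MaPre2020b}, the algebra arises from the inductive $\lambda$-graph-system structure and is not presented by finitely many partial isometries with a single relation, so one must verify carefully that the reconstruction of $\varphi$ from a diagonal-preserving $\Phi$ genuinely applies for normal subshifts and that the degree-one elements $\gamma_x$ are correctly detected inside $\mathcal{O}_{\Lambda^{\min}}$. The delicate combinatorial point is the cancellation in the signed-measure identity: at points with periodic tails the matching of atoms admits spurious higher-lag solutions, and ruling these out—so that genuine conjugacy rather than mere eventual conjugacy is forced—requires the density of aperiodic points guaranteed by normality together with the continuity extension.
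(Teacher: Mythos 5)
Your overall plan follows the right contours, and your final atom--matching argument is essentially the paper's concluding lemma, but there is a genuine gap at the foundation of both directions: you model $\OLmin$ as the $C^*$-algebra of a Deaconu--Renault groupoid with unit space $X_\Lambda$ and with $\D_\Lambda\cong C(X_\Lambda)$ as its canonical diagonal. For a general normal subshift this is false. The shift $\sigma_\Lambda$ on $X_\Lambda$ need not be a local homeomorphism; the groupoid actually underlying $\OLmin$ is the Deaconu--Renault groupoid of the covering system $(X_{\L_\Lambda^{\operatorname{min}}},\sigma_{\L_\Lambda^{\operatorname{min}}})$, whose unit space is $X_{\L_\Lambda^{\operatorname{min}}}$ and whose canonical diagonal is $\DLmin\cong C(X_{\L_\Lambda^{\operatorname{min}}})$. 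The subalgebra $\D_\Lambda$ appearing in assertion (ii) is in general a \emph{proper} subalgebra of $\DLmin$ and is not maximal abelian (Proposition \ref{prop:2.4} gives $\D_\Lambda^\prime\cap\OLmin=\DLmin$). Consequently the hypothesis $\Phi(\D_{\Lambda_1})=\D_{\Lambda_2}$ is not a Cartan-preserving hypothesis, and your appeal to ``reconstruction theory for diagonal-preserving isomorphisms'' to produce a groupoid isomorphism $\varphi$ over $h$ does not apply. You flag this yourself as something ``to verify carefully,'' but it is not a verification issue: it is the central obstruction, and the paper's Section 4 is organized precisely around circumventing it. Likewise, in (i)$\Rightarrow$(ii) your map $\varphi(u,m,v)=(h(u),m,h(v))$ is defined on a groupoid that is not the one presenting $\OLmin$; one must first lift the conjugacy $h$ to a conjugacy $h_\L$ of the cover spaces compatible with the factor maps $\pi_{\L_i}$, which is supplied by \cite[Proposition 7.5]{MaPre2020a} and is where the minimality of $\L_\Lambda^{\operatorname{min}}$ is actually used.

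The paper's route through these obstacles is worth internalizing. For (ii)$\Rightarrow$(i) it first specializes to $f\equiv 1$ and invokes \cite[Proposition 8.9]{MaPre2020a} to obtain an eventual conjugacy, hence a properly continuous orbit equivalence realized by some homeomorphism $h_\Lambda$; from this geometric data Lemma \ref{lemma:3.8} produces a \emph{second} isomorphism $\Phi_1$ that provably intertwines the potentials via the transfer map $\Psi_{h_\Lambda}$. The key algebraic input is then Lemmas \ref{lem:alpha} and \ref{lem:ualpha}: any automorphism of $\OLamonemin$ that fixes $\D_{\Lambda_1}$ pointwise automatically fixes the larger diagonal $\mathcal{D}_{{\frak L}_{\Lambda_1}^{\operatorname{min}}}$ and is implemented by a unitary therein. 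Applying this to $\Phi^{-1}\circ\Phi_1$ transfers the intertwining property from $\Phi_1$ to the given $\Phi$ and yields $\Psi_{h_\Lambda}(f)=f\circ h_\Lambda$ for all $f$; only at that point does your signed-measure cancellation (identical to the paper's final lemma, including the density argument at eventually periodic points) take over. Without an argument replacing Lemmas \ref{lem:alpha}--\ref{lem:ualpha}, your proof of (ii)$\Rightarrow$(i) does not get off the ground.
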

where
$\rho^{\Lambda_1, f}_t , \rho^{\Lambda_2,\Phi(f)}_t$
are gauge actions with potential $f$ and $\Phi(f)$, respectively.
They are defined in a similar fashion to those of Cuntz--Krieger algebras.

We have to remark here that Brix--Carlsen in \cite{BC2019} characterized topological conjugacy
of general subshifts in terms of certain $C^*$-algebras with its gauge actions and some additional structure
written $\tau$ of completely positive maps. 
The $C^*$-algebras treated in their paper \cite{BC2019} are different from our $C^*$-algebras in this paper
if the subshifts are shifts of finite type.
In case of irreducible nontrivial sofic shifts, 
our $C^*$-algebras $\OLmin$ are simple whereas their $C^*$-algebras 
in \cite{BC2019} are not simple in general.

In what follows, we denote by $\Zp$ and $\N$ 
the set of nonnegative integers and the set of positive integers, respectively.

\section{Preliminary}

In this section, 
we will briefly review  normal sunshifts,
$\lambda$-graph systems, its \'etale groupoids  and the associated $C^*$-algebras.

\medskip

{\bf 1. Normal subshifts.}
Let $\Lambda$ be a subshift over alphabet 
$\Sigma =\{1,2,\dots, N\}$ with $N\ge 2$.
Denote by $X_\Lambda$ its right one-sided subshift.
Let us denote by $B_k(\Lambda)$ the set of admissible words 
$\{(a_1, \dots, a_k) \in \Sigma^k \mid (a_n)_{n\in \Z}\in \Lambda \}$
of $\Lambda$ with its length $k$. 
We put $B_*(\Lambda) = \cup_{k=0}^\infty B_k(\Lambda)$, where 
$B_0(\Lambda)$ denotes the empty word.
The length $m$ of  a word $(\mu_1,\dots,\mu_m) \in B_m(\Lambda)$ 
is denoted by $|\mu|$. 
For two words $\mu=(\mu_1,\dots,\mu_m),\nu=(\nu_1,\dots,\nu_n)$,
let us denote by
$\mu\nu $ its concatenation 
$(\mu_1,\dots,\mu_m,\nu_1,\dots,\nu_n)$.
For $a = (a_n)_{n \in \N} \in X_\Lambda$ and $k,l \in \N$ with $k\le l$, 
we put
$a_{[k,l]} =(a_k, a_{k+1},\dots, a_l)\in B_{l-k+1}(\Lambda),
\,a_{[k,l)} =(a_k, a_{k+1},\dots, a_{l-1})\in B_{l-k}(\Lambda),
$
and
$a_{[k,\infty)} =(a_k, a_{k+1},\dots )\in X_\Lambda.$
Let us define 
the predecessor set $\Gamma_l^-(\mu) $
and the follower set $\Gamma_l^+(\mu) $ of a word $\mu \in B_m(\Lambda)$ by 
\begin{equation*}
\Gamma_l^-(\mu) = \{ \nu \in B_l(\Lambda) \mid \nu \mu \in B_{l+m}(\Lambda)\}
\quad
\text{ and }
\quad
\Gamma_l^+(\mu) = \{ \nu \in B_l(\Lambda) \mid \mu \nu \in B_{l+m}(\Lambda)\}.
\end{equation*}
We put
$\Gamma_*^-(\mu) = \cup_{l=0}^\infty \Gamma_l^-(\mu)
$ and
$
\Gamma_*^+(\mu) = \cup_{l=0}^\infty \Gamma_l^+(\mu).
$
The notion of $l$-synchronizing word was introduced in 
\cite{KMAAA2013} (cf.  \cite{MaIsrael2013}, \cite{MaJAMS2013})
in the following way.
 A word $\mu \in B_*(\Lambda)$ is said to be $l$-{\it synchronizing}\/
if the equality
$\Gamma_l^-(\mu) = \Gamma_l^-(\mu \xi)$ holds
for any word $\xi \in \Gamma_*^+(\mu)$. 
We denote by $S_l(\Lambda)$ 
the set of $l$-synchronizing words of $\Lambda$.
Any admissible word of $\Lambda$ 
is defined to be $0$-synchronizing so that $S_0(\Lambda) = B_*(\Lambda)$. 
Recall that a subshift $\Lambda$ is said to be irreducible if for any ordered pair 
$\mu, \nu $ of $B_*(\Lambda)$, there exists a word $\eta\in B_*(\Lambda)$
such that $\mu \eta \nu \in B_*(\Lambda)$ (cf. \cite{LM}).
An  irreducible subshift $\Lambda$ is said to be  
$\lambda$-{\it synchronizing}\/ 
if  for any word $\nu \in B_l(\Lambda)$
and an integer $k \in \N$ with $k \ge l$, 
there exists $\mu \in S_{k}(\Lambda)$ such that  
$\nu \mu \in S_{k-l}(\Lambda)$ (\cite{KMAAA2013},  \cite{MaIsrael2013}, \cite{MaJAMS2013}).
A subshift $\Lambda$ is said to be {\it nontrivial}\/ if the cardinality of $\Lambda$ is not finite as a set.

\begin{definition}[{\cite{MaPre2020a}}]
A nontrivial subshift $\Lambda$ is said to be  {\it normal}\/
if it is irreducible and $\lambda$-synchronizing. 
\end{definition}
The class of normal subshifts 
contains the class of irreducible nontrivial sofic shifts $\Lambda$,
and hence it contains topological Markov shifts for irreducible non-permutation matrices.
It also includes a lot of nonsofic subshifts such as 
Dyck shifts, substitution minimal shifts, $\beta$-shitfs and so on.
See \cite{MaPre2020a} for the detail of normal subshifts. 

\medskip

{\bf 2. $\lambda$-graph system $\L$.}
We will next recall the notion of $\lambda$-graph system
introduced in \cite{MaDocMath1999}. 
A $\lambda$-graph system $\L =(V, E, \lambda,\iota)$ over $\Sigma$
consists of a sequence of vertex set $V = \cup_{l=0}^\infty V_l$,
 edge set $E = \cup_{l=0}^\infty E_{l,l+1}$, 
 a labeling map $\lambda: E \longrightarrow \Sigma$ 
and a surjective map $\iota: V_{l+1} \longrightarrow V_l, l \in \Zp$
that satisfy some compatibility condition called the local property of 
$\lambda$-graph system 
(see \cite{MaIsrael2013}, \cite{MaJAMS2013}, \cite{MaPre2020a}
for the local property of $\lambda$-graph system).
Each edge $e \in E_{l,l+1}$ has its source $s(e) $ in $V_l$,
its terminal $t(e)$ in $V_{l+1}$ and its label $\lambda(e)$ in $\Sigma$.
The first three objects $(V, E, \lambda)$
yield a labeled Bratteli diagram.
The fouth one $\iota$ plays a role of shift on the labeled Bratteli diagram.
Any $\lambda$-graph system $\L$ presents a subshift written $\Lambda_{\L}$
whose admissible words $B_*(\Lambda_\L)$ 
 consist of finite label sequences appearing in 
the labeled Bratteli diagram of the $\lambda$-graph system.   
A $\lambda$-graph system $\L$ is said to be {\it left-resolving}\/ if 
two edges $e, f \in E_{l,l+1}$
satisfy
$\lambda(e) = \lambda(f)$ and $t(e) = t(f),$
then $e=f$.
For a vertex $v \in V_l$, let us define the predecessor set $\Gamma_l^-(v)$  of $v$ by
\begin{align*}
 & \Gamma_l^-(v) \\ 
=& \{ (\lambda(e_1),\dots,\lambda(e_l)) \in B_l(\Lambda_\L)
\mid e_i \in E_{i-1,i}, t(e_i) = s(e_{l+1}), i=1,2,\dots,l-1, \, t(e_l)= v\},
\end{align*}
that is 
the set of words of $B_l(\Lambda_\L)$
of label sequences 
ending at the vertex $v$.
The $\lambda$-graph system $\L$ is said to be {\it predecessor-separated}\/ if
every distinct vertices  $v, u \in V_l$ must satisfy
$\Gamma_l^-(v) \ne \Gamma_l^-(u). $
Any subshift $\Lambda$  may be presented by several kinds of different $\lambda$-graph systems. 
In particular if a subshift $\Lambda$ is normal, 
it has a minimal presentation $\L_\Lambda^\min$
of $\lambda$-graph system.
Let us recall its construction following \cite{MaIsrael2013}.
Two words $\mu, \nu \in S_l(\Lambda)$ are said to be $l$-past equivalent if
$\Gamma_l^-(\mu) = \Gamma_l^-(\nu)$.
The equivalence class is denoted by $[\mu]_l$.
Let  
$V_l^\min$ be the set of $l$-past equivalence classes
of $S_l(\Lambda)$. 
That will be a vertex set of the $\lambda$-graph system $\LLmin$.
For $\nu \in S_{l+1}(\Lambda)$ and $\alpha \in \Sigma$,
we define an edge from the vertex $[\alpha\nu]_l \in V_l^\min$ 
to the vertex $[\nu]_{l+1} \in V_{l+1}^\min$
that is labeled  $\alpha$.
The set of such edges is denoted by $E^\min_{l,l+1}$.
A word $\nu \in S_{l+1}(\Lambda)$ naturally defines a correspondence 
from $[\nu]_{l+1}\in V_{l+1}^\min$ to $[\nu]_l \in V_l^\min$,
that yields a surjective map
$\iota^\min:V_{l+1}^\min \longrightarrow V_l^\min$.
We have a $\lambda$-graph system 
$\L_\Lambda^\min =(V^\min, E^\min, \lambda^\min, \iota^\min)$ over $\Sigma$
that presents the original subshift $\Lambda$.
The $\lambda$-graph system 
$\L_\Lambda^\min$ is left-resolving and predecessor-separated.
Since $\L_\Lambda^\min$ does not
have  any proper  $\lambda$-graph subsystem presenting the original subshift $\Lambda$
(\cite[Proposition 3.10]{MaIsrael2013}), 
it is called the minimal presentation of $\Lambda$.

\medskip

{\bf 3. \'Etale groupoid $G_\L$ from $\L$.}
To construct a $C^*$-algebra from a left-resolving $\lambda$-graph system $\L$, 
we provide an \'etale groupoid written $G_\L$ from $\L$.  
Suppose that a $\lambda$-graph system $\L$ is left-resolving.
Let $\Omega_\L$ be the set of projective  limit of the system
$\iota: V_{l+1} \longrightarrow V_l, l \in \Zp$, that is defined by 
\begin{equation*}
\Omega_\L = \{ (v^l)_{l\in \Zp} \in 
\prod_{l\in \Zp} V_l \mid \iota(v^{l+1}) = v^l, l \in \Zp\}.
\end{equation*}
It is endowed with its projective limit topology, so that 
it is a compact Hausdorff space.
Let $E_\L$ be the set of triples 
$(u, \alpha, v) \in \Omega_\L\times\Sigma\times\Omega_\L$
such that  
there exists a sequence of edges 
$e_{l,l+1} \in E_{l,l+1}$ such that 
$$
s(e_{l,l+1}) = u_l, \qquad
t(e_{l,l+1}) = v_{l+1}, \qquad  
\lambda(e_{l,l+1}) = \alpha,
\qquad l\in \Zp,
$$
where $u=(u^l)_{l\in \Zp}, v=(v^l)_{l\in \Zp}\in \Omega_\L$.
It becomes a continuous graph in the sense of V. Deaconu
(cf. \cite{De}, \cite{De2}).
Let $X_\L$ be the set of one-sided paths of $E_\L$ defined by 
\begin{align*}
X_\L = \{ (\alpha_i, v_i)_{i\in \N} \in \prod_{i\in \N}(\Sigma\times\Omega_\L) & \mid
(v_i, \alpha_{i+1}, v_{i+1})\in E_\L \text{ for all } i\in \N, \\
& (v_0, \alpha_1, v_1) \in E_\L \text{ for some } v_0\in \Omega_\L\}. 
\end{align*}
It is endowed with its relative topology from
 $\prod_{i\in \N}(\Sigma\times\Omega_\L)$
 so that it becomes a zero-dimensional compact Hausdorff space.
The shift map
$\sigma_\L : X_\L \longrightarrow X_\L$ is defined by 
$\sigma_\L((\alpha_i, u_i)_{i\in \N}) =
(\alpha_{i+1}, u_{i+1})_{i\in \N}.
$
It gives rise to a continuous surjection on $X_\L$.
Let us denote by $\Lambda$ the presented subshift
$\Lambda_\L$ by $\L$.
Its one-sided subshift is denoted by $X_\Lambda$. 
The map $\pi_\L: X_\L\longrightarrow X_\Lambda$ 
defined by 
\begin{equation}
\pi_\L((\alpha_i, v_i)_{i\in \N}) = (\alpha_i)_{i\in \N}, \qquad (\alpha_i, v_i)_{i\in \N}\in X_\L
\label{eq:pilambda}
\end{equation}
is a continuous surjection satisfying 
$\pi_\L\circ\sigma_\L = \sigma_\Lambda\circ\pi_\L$.

Let us denote by
$G_\L$ so-called the Deaconu-Renault groupoid  for the shift dynamical system
$(X_\L, \sigma_\L)$
(cf. \cite{De}, \cite{De2}, \cite{Renault}, \cite{Renault2000}, \cite{Renault2}, \cite{Renault3}, \cite{Sims}).
It is defined by 
\begin{equation*}
G_\L = \{ (x, p-q, z) \in X_\L\times\Z\times X_\L \mid 
\sigma_\L^p(x) = \sigma_\L^q(z) \}.
\end{equation*}
The open neighborhood basis  of $G_\L$
consists of the set of the form
$
\{ (x, p-q, z) \in G_\L \mid 
x \in U, z \in V, \sigma_\L^p(x) = \sigma_\L^q(z) \}
$
 for open sets $U, V $ of $X_\L$ such that 
 $\sigma_\L^p|_U$ and $\sigma_\L^q|_V$ are homeomorphisms 
 on the same open range.
Its unit space 
$G_\L^{(0)} =\{(x,0,x) \in G_\L \mid x \in X_\L\}$ 
is naturally identified 
with the original space $X_\L$ as topological spaces.
The source map and  range map of $G_\L$ are defined by
$s(x,n, z) =z, r(x,n,z) =x$. 
The product and inverse operations are defined by
$(x,n,z)(z,m,w) = (x, n+m, w)$
and
$(x,n,z)^{-1} = (z,-n,x)$.
It is known that the groupoid $G_\L$ is \'etale and amenable 
(cf. \cite{MaDocMath2002}, \cite{MaDynam2020}, \cite{Renault2}, \cite{Renault3}, \cite{Sims}).

\medskip

{\bf 4. $C^*$-algebra $\OL$.}
  Let us briefly review the construction of the $C^*$-algebra
 $C^*(G_\L)$ of the \'etale groupoid $G_\L$.  
Let $C_c(G_\L)$ be the $*$-algebra of complex valued continuous functions on $G_\L$ with compact support.
Its product structure and $*$-involution are defined by  
\begin{gather*}
(f*g)(t) = \sum_{s\in G_\L, r(s) = r(t)} f(s)g(s^{-1}t),\qquad
f^*(t) = \overline{f(t^{-1})}
\end{gather*} 
for $f, g \in C_c(G_\L), t \in G_\L$.
For $\xi, \eta \in C_c(G_\L)$ and $f\in C(G_\L^{(0)})$, 
by setting
\begin{gather*}
(\xi f)(x,n,z) = \xi(x,n,z) f(z), \qquad
<\xi, \eta>(z) = \sum_{(x,n,z) \in G_\L} \overline{\xi(x,n,z)}\eta(x,n,z)
\end{gather*}
the $*$-algebra $C_c(G_\L)$ has $C(G_\L^{(0)})$-right module with 
$C(G_\L^{(0)})$-valued inner product.
The completion denoted by $\ell^2(G_\L)$
of $C_c(G_\L)$ by the inner product becomes a Hilbert $C^*$-right module over 
the commutative $C^*$-algebra $C(G_\L^{(0)})$. 
For $f \in C_c(G_\L)$, 
a bounded adjointable $C(G_\L^{(0)})$-module map
$\pi(f)$ on $\ell^2(G_\L)$ is defined by setting
$\pi(f)\xi = f* \xi$ 
for $ \xi \in C_c(G_\L)$.
Then the reduced $C^*$-algebra $C^*_r(G_\L)$ of the groupoid $G_\L$ is defined by 
the closure of $\pi(C_c(G_\L))$ in 
the $C^*$-algebra  $B(\ell^2(G_\L))$ of bounded adjointable $C_0(G_\L^{(0)})$-module maps
on $\ell^2(G_\L).$
\begin{definition}[{\cite{MaDocMath2002}}]
The $C^*$-algebra $\OL$ is defined by the reduced $C^*$-algebra $C^*_r(G_\L)$
of the groupoid $G_\L$.
\end{definition}
Let us denote by $\{ v_1^l, \dots, v_{m(l)}^l\}$ 
the set $V_l$ of vertices at level $l$ in the $\lambda$-graph system
$\L$.  
For $x =(\alpha_i, v_i)_{i\in \N}\in X_\L,$
we set 
$\lambda_i(x) =\alpha_i \in \Sigma, i\in \N$.
There exists $v(x)_0 ={(v(x)_0^l)}_{l\in \Zp}\in \Omega_\L$ such that 
$(v(x)_0, \alpha_1, v_1) \in E_\L$. 
For $\alpha \in \Sigma, v_i^l \in V_l$, let us define clopen sets 
$U(\alpha), U(v_i^l) \subset G_\L$ by setting
\begin{gather*}
U(\alpha) = \{ (x, 1, \sigma_\L(x)) \in G_\L \mid \lambda_1(x) = \alpha\}, \\ 
U(v_i^l) = \{ (x,0,x) \in G_\L \mid v(x)_0^l = v_i^l \}.
\end{gather*}
Let us denote by $\chi_F\in C_c(G_\L)$ 
the characteristic function of a clopen set $F\subset G_\L$.
We define elements $S_\alpha, E_i^l$ of $\OL$ by setting 
\begin{equation*}
S_\alpha = \pi(\chi_{U(\alpha)}), \qquad 
E_i^l = \pi(\chi_{U(v_i^l)}) \qquad \text{ in } \pi(C_c(G_\L)). 
\end{equation*}
For the $\lambda$-graph system $\L$, its transition matrix system
$(A_{l,l+1}, I_{l,l+1})_{l\in \Zp}$ 
defined below determines the structure 
of $\L$: 
\begin{align*}
A_{l,l+1}(i, \alpha,j) & =
{\begin{cases}
1 & \text{ if there exists } e \in E_{l,l+1} ; s(e) = v_i^l, t(e)= v_j^{l+1}, \lambda(e) = \alpha, \\
0 & \text{ otherwise,}
\end{cases}} \\
I_{l,l+1}(i,j) & =
{\begin{cases}
1 & \text{ if  } \iota(v_j^{l+1}) = v_i^l,  \\
0 & \text{ otherwise}
\end{cases}} \\
\end{align*}
for $v_i^l \in V_l, v_j^{l+1} \in V_{l+1}, \alpha \in \Sigma.$
 A $\lambda$-graph system $\L$ is said to satisfy {\it condition}\/ (I)
if for every vertex $v \in V$, the set $\Gamma_\infty^+(v)$ 
of infinite concatenated label sequences leaving $v$ in $\L$ 
contains at least two distinct sequences (\cite{MaDocMath2002}).  
For $\mu =(\mu_1,\dots,\mu_m) \in B_m(\Lambda)$, we write
$S_\mu = S_{\mu_1}\cdots S_{\mu_m}$.

\begin{proposition}[{\cite[Theorem 3.6, Theorem 4.3]{MaDocMath2002}}]
Let $\L$ be a left-resolving $\lambda$-graph system.
The $C^*$-algebra $\OL$ is realized as a universal $C^*$-algebra generated by partial isometries
$S_\alpha, \alpha \in \Sigma$ and projections
$E_i^l, i=1,2,\dots,m(l)$
subject to the relations
\begin{gather*}
\sum_{\beta \in \Sigma} S_\beta S_\beta^* = \sum_{j=1}^{m(l)} E_j^l = 1, \qquad
S_\alpha S_\alpha^* E_i^l = E_i^l S_\alpha S_\alpha^*, \\
E_i^l = \sum_{j=1}^{m(l+1)} I_{l,l+1}(i,j) E_j^{l+1}, \qquad 
S_\alpha^* E_i^l S_\alpha = \sum_{j=1}^{m(l+1)} A_{l,l+1}(i,\alpha, j) E_j^{l+1}, 
\end{gather*}
for $\alpha \in \Sigma, i=1,2,\dots,m(l).$
If in particular $\L$ satisfies  condition (I),
the $C^*$-algebra $\OL$ is a unique $C^*$-algebra subject to the above operator relations.
Furthermore if $\L$ is $\lambda$-irreducible in the sense of \cite{MaJAMS2013},
it is simple and purely infinite.
\end{proposition}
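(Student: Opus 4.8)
I would establish the three assertions in turn, working throughout inside the amenable Deaconu--Renault groupoid $G_\L$ and its reduced algebra $\OL=C^*_r(G_\L)$. First, to verify that the concrete elements $S_\alpha=\pi(\chi_{U(\alpha)})$ and $E_i^l=\pi(\chi_{U(v_i^l)})$ satisfy the displayed relations: each $U(\alpha)$ is a compact open bisection on which the cocycle $c(x,n,z)=n$ equals $1$, while each $U(v_i^l)$ is a clopen subset of the unit space $G_\L^{(0)}=X_\L$. Since every $x\in X_\L$ has a well-defined initial label $\lambda_1(x)\in\Sigma$ and, for each level $l$, a well-defined vertex $v(x)_0^l\in V_l$, the ranges of $\{U(\alpha)\}_\alpha$ and the sets $\{U(v_i^l)\}_i$ each partition $X_\L$, giving $\sum_\beta S_\beta S_\beta^*=\sum_j E_j^l=1$. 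The projections $S_\alpha S_\alpha^*$ and $E_i^l$ both lie in the commutative algebra $C(G_\L^{(0)})$ and hence commute. The compatibility $\iota(v(x)_0^{l+1})=v(x)_0^l$ inside the projective limit $\Omega_\L$ yields $E_i^l=\sum_j I_{l,l+1}(i,j)E_j^{l+1}$, and a direct convolution computation---using that prepending the label $\alpha$ carries the level-$l$ vertex of a path to exactly the level-$(l+1)$ vertices reachable along an $\alpha$-labeled edge---gives $S_\alpha^* E_i^l S_\alpha=\sum_j A_{l,l+1}(i,\alpha,j)E_j^{l+1}$. Finally, since $C_c(G_\L)$ is densely spanned by characteristic functions of compact open bisections, each of which decomposes into pieces on which $c$ is constant and the level-$l$ vertex data is fixed, the elements $S_\mu E_i^l S_\nu^*$ span a dense $*$-subalgebra, so $\{S_\alpha,E_i^l\}$ generate $\OL$.

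For the universal property, universality of the free construction together with the relations just checked yields a surjective $*$-homomorphism $\Phi$ from the universal $C^*$-algebra $\mathcal{A}$ on abstract generators $(s_\alpha,e_i^l)$ onto $\OL$, intertwining the gauge actions. The substance of the uniqueness assertion is that $\Phi$ is injective, equivalently that every $C^*$-algebra generated by nonzero partial isometries and projections obeying the relations is a faithful copy; this is exactly where condition (I) is indispensable, as the degenerate one-symbol case (a single unitary, satisfied both by $C(\T)$ and by $\mathbb{C}$) shows. The fixed-point algebra of $\OL$ under the gauge action $\rho^\Lambda$ is the AF algebra $C^*_r(c^{-1}(0))$, where $c^{-1}(0)$ is the increasing union of the compact principal subgroupoids $\{(x,0,z):\sigma_\L^p(x)=\sigma_\L^p(z)\}$ and whose finite-dimensional structure is dictated by $(A_{l,l+1},I_{l,l+1})$. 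Condition (I)---at least two distinct infinite label sequences in $\Gamma_\infty^+(v)$ leaving every vertex $v$---implies that $G_\L$ is topologically principal, i.e. the points of $X_\L$ with trivial isotropy are dense, which in turn provides, for any finite set of generators and any $\varepsilon>0$, a compressing projection realizing a faithful conditional expectation onto this AF core. The Cuntz--Krieger-type uniqueness theorem for amenable, topologically principal \'etale groupoids \cite{Renault,Sims} then forces $\Phi$ (and any such comparison map) to be injective.

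For $\lambda$-irreducible $\L$, I would deduce simplicity and pure infiniteness from properties of $G_\L$. The $\lambda$-irreducibility of \cite{MaJAMS2013} translates into minimality of $G_\L$---every orbit is dense in $X_\L$---and in particular forces condition (I); combined with topological principality and amenability, the standard criterion gives that $\OL$ is simple. Pure infiniteness follows by exhibiting $G_\L$ as locally contracting: the branching guaranteed by $\lambda$-irreducibility supplies, near any clopen set, a bisection strictly contracting it, so that every hereditary subalgebra of the simple algebra $\OL$ contains an infinite projection and $\OL$ is purely infinite.

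The step I expect to be the main obstacle is the passage, in the uniqueness argument, from condition (I) to the topological principality of $G_\L$. Unlike for ordinary graphs or shifts of finite type, the unit space $X_\L$ is assembled from the projective limit $\Omega_\L$ of the vertex sets, so one must produce, arbitrarily close to a given path $x$, another path carrying the same $\Omega_\L$-data but a different forward label sequence. This is precisely what the ``two distinct sequences leaving each vertex'' form of condition (I) is designed to furnish, but the construction has to be carried out compatibly with the shift maps $\iota$ relating consecutive levels, and controlling this compatibility across all levels is the delicate point.
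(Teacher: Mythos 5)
This proposition is quoted from \cite[Theorem 3.6, Theorem 4.3]{MaDocMath2002} and the present paper gives no proof of it, so the comparison is with the cited source. Your sketch is a coherent and essentially correct outline, but it takes a genuinely different route. Because this paper \emph{defines} $\OL$ as $C^*_r(G_\L)$, you work entirely inside the Deaconu--Renault groupoid: relations checked on characteristic functions of bisections, uniqueness via topological principality of $G_\L$ and the Cuntz--Krieger uniqueness theorem for amenable topologically principal \'etale groupoids, and simplicity/pure infiniteness via minimality and local contractivity. The original proof in \cite{MaDocMath2002} predates the groupoid model: there $\OL$ is constructed concretely from creation operators on a sub-Fock space built from the $\lambda$-graph system, the universal presentation is verified directly, and uniqueness under condition (I) is obtained by the classical Cuntz--Krieger argument --- a faithful conditional expectation onto the AF core $\FL$ (filtered by the subalgebras determined by $(A_{l,l+1},I_{l,l+1})$) together with explicitly constructed compressing projections supplied by condition (I); simplicity and pure infiniteness are likewise proved by exhibiting, for any nonzero positive element, isometries conjugating a compression of it to the identity. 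The identification of that algebra with $C^*_r(G_\L)$ is a separate result of \cite{MaDynam2020}. Your route buys access to the general groupoid machinery of \cite{Renault2}, \cite{Sims} and shorter proofs of simplicity and pure infiniteness; the original route is self-contained and avoids having to prove that condition (I) yields topological principality, which --- as you correctly identify --- is the one step in your sketch that is not routine and requires the level-compatible perturbation argument carried out in \cite{MaDynam2020}. One small caution: you slightly conflate injectivity of the canonical surjection from the universal algebra onto $C^*_r(G_\L)$ (which follows from a gauge-invariant uniqueness argument and does not need condition (I)) with faithfulness of an arbitrary representation of the relations (which is where condition (I) is genuinely needed); keeping these two steps separate is necessary to get the ``unique $C^*$-algebra'' assertion in the stated form.
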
  
We also note that if $\L$ is predecessor-separated, 
each projection $E_i^l$ of generators 
may be written by some products of $S_\mu^* S_\mu$ and $1- S^*_\nu S_\nu$ for
$\mu, \nu \in B_l(\Lambda)$
in the following way:
\begin{equation}
E_i^l = \prod_{\mu \in B_l(\Lambda)} S_\mu^* S_\mu^{\epsilon_i^l(\mu)} \label{eq:Eil}
\end{equation}
where
\begin{equation*}
S_\mu^* S_\mu^{\epsilon_i^l(\mu)}
=
\begin{cases}
S_\mu^* S_\mu & \text{ if } \mu \in \Gamma_l^-(v_i^l),\\
1-S_\mu^* S_\mu & \text{ otherwise, }
\end{cases}
\end{equation*}
so that the algebra $\OL$
is  generated by only the partial isometries
$S_1,\dots,S_N$.  

Let us define commutative $C^*$-subalgebras
$\DL, \DLam$ of $\OL$ by
\begin{align}
\DL & = C^*(S_\mu E_i^l S_\mu^* \mid \mu \in B_*(\Lambda), i=1,2,\dots, m(l), l\in \Zp),
\label{eq:DL} \\
\DLam & = C^*(S_\mu S_\mu^* \mid \mu \in B_*(\Lambda)), 
\label{eq:DLam} 
\end{align}
where $C^*(F)$ for a subset $F \subset \OL$ 
denotes the $C^*$-subalgebra of $\OL$ generated by 
$F$. 
It is easy to see that 
the $C^*$-subalgebras $\DL, \DLam$
are naturally isomorphic to the commutative $C^*$-algebras
$C(X_\L), C(X_{\Lambda})$ of continuous functions on $X_\L, X_{\Lambda}$,
respectively.
We note that the natural inclusion relations  $\DLam\subset \DL $
comes from the continuous surjection 
$\pi_\L: X_\L\longrightarrow X_\Lambda$.
\begin{proposition}[{\cite[Proposition 3.7]{MaPre2020a}}]
Let $\L$ be a left-resolving $\lambda$-graph system satisfying condition (I).
Then  
$\DLam^\prime \cap \OL = \DL,$
where $\DLam^\prime \cap \OL$ denotes the algebra of elements of $\OL$
commuting with all elements of $\DLam.$ 
\end{proposition}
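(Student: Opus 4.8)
The plan is to realise the inclusions $\DLam\subset\DL\subset\OL$ inside the groupoid model $\OL=C^*_r(G_\L)$, $\DL=C(G_\L^{(0)})$, and to read off the relative commutant from the supports of elements regarded as functions on $G_\L$. The inclusion $\DL\subseteq\DLam'\cap\OL$ is immediate: $\DL$ is commutative and contains $\DLam$, so every element of $\DL$ commutes with $\DLam$. The work is the reverse inclusion. Throughout I would use the canonical injective $*$-preserving linear map $j\colon\OL\longrightarrow C_0(G_\L)$ coming from the regular representation, together with the faithful conditional expectation $E\colon\OL\longrightarrow\DL$, $E(a)=j(a)|_{G_\L^{(0)}}$, both available because $G_\L$ is \'etale and amenable.

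First I would translate the commutation condition into a support condition. For $g\in C(G_\L^{(0)})$ and $a\in\OL$ one has $j(ag)(\gamma)=j(a)(\gamma)\,g(s(\gamma))$ and $j(ga)(\gamma)=g(r(\gamma))\,j(a)(\gamma)$, so $a$ commutes with $g$ exactly when $j(a)(\gamma)\big(g(s(\gamma))-g(r(\gamma))\big)=0$ for every $\gamma\in G_\L$. Under the identification $G_\L^{(0)}=X_\L$, the subalgebra $\DLam$ is precisely the pullback $\pi_\L^{*}C(X_\Lambda)$, that is, the functions on $X_\L$ factoring through $\pi_\L$. Since $C(X_\Lambda)$ separates points of $X_\Lambda$, commuting with all of $\DLam$ is equivalent to $\operatorname{supp}j(a)\subseteq H$, where
\[
H=\{\gamma\in G_\L\mid \pi_\L(r(\gamma))=\pi_\L(s(\gamma))\}.
\]
Thus $\DLam'\cap\OL=\{a\in\OL\mid \operatorname{supp}j(a)\subseteq H\}$, and it remains to show any such $a$ is supported on the unit space $G_\L^{(0)}$.

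Next I would slice $H$ by the degree, the continuous cocycle $c(x,n,z)=n$ generating the gauge action. Two observations drive the argument. First, left-resolvingness forces $H\cap c^{-1}(0)=G_\L^{(0)}$: if $(x,0,z)\in G_\L$ with $\pi_\L(x)=\pi_\L(z)$, then $\sigma_\L^{p}(x)=\sigma_\L^{p}(z)$ for some $p$, so $x$ and $z$ share all labels and agree in their $\Omega_\L$-coordinates from position $p$ on; backward determinism of a left-resolving $\lambda$-graph system (the source of an edge is determined by its label and terminal) propagates this equality down to every coordinate, forcing $x=z$. Second, the gauge action is implemented on functions by $j(\rho_t(a))(\gamma)=e^{2\pi\sqrt{-1}\,c(\gamma)t}\,j(a)(\gamma)$, so the $n$-th spectral component $a_n=\int_\T\rho_t(a)e^{-2\pi\sqrt{-1}\,nt}\,dt$ satisfies $j(a_n)=j(a)\cdot\chi_{c^{-1}(n)}$; in particular $j(a_n)$ is a continuous function supported on $H\cap c^{-1}(n)$, and $a_0\in\DL$ by the first observation.

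The main obstacle, and the place where condition (I) enters, is killing the components $a_n$ with $n\neq0$. A point $\gamma=(x,n,z)\in H$ with $n\neq0$ forces the label sequence $\pi_\L(x)$ to be eventually periodic of period $|n|$, so $H\cap c^{-1}(n)$ projects into the set of eventually $|n|$-periodic points of $X_\Lambda$. I would argue that condition (I) makes this set have empty interior: from any vertex one may iterate the branching guaranteed by $\Gamma_\infty^+(v)$ having at least two elements to produce, inside every cylinder of $X_\Lambda$, points with aperiodic label sequences, so no cylinder consists solely of eventually periodic points. Transporting this through $\pi_\L$ shows $H\cap c^{-1}(n)$ has empty interior in $G_\L$ for $n\neq0$. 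A continuous function supported on a closed set with empty interior vanishes, hence $j(a_n)\equiv0$ and $a_n=0$ for all $n\neq0$ by injectivity of $j$. Therefore $a=a_0\in\DL$, completing the reverse inclusion. The points I expect to spend the most care on are the precise empty-interior statement for $n\neq0$ (equivalently, the aperiodicity consequence of condition (I), which is the $\lambda$-graph-system analogue of the Cuntz--Krieger condition that makes $\DL$ a MASA) and the verification that the degree decomposition interacts with $j$ and with $\operatorname{supp}j(a)$ as claimed.
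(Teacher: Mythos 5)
This proposition is not proved in the paper at all: it is quoted verbatim from \cite[Proposition~3.7]{MaPre2020a}, so there is no in-text proof to compare against. The proof in that reference is the operator-algebraic ``Cuntz argument'' (Fourier decomposition with respect to the gauge action, identification of the zeroth component inside the AF core, and condition (I) used to manufacture projections killing the components of nonzero degree). Your proposal is the groupoid-theoretic counterpart of the same strategy --- same two pillars (degree decomposition, condition (I)), different technical implementation (supports of functions on $G_\L$ and an empty-interior argument instead of norm estimates with projections). The skeleton is sound: the translation of the commutation relation into $\operatorname{supp}j(a)\subseteq H$ is correct because $\DLam=\pi_\L^*C(X_\Lambda)$ and cylinder functions separate points of $X_\Lambda$; and the backward-determinism argument showing $H\cap c^{-1}(0)=G_\L^{(0)}$ from left-resolvingness is exactly right.

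Two concrete gaps remain. First, $j(a_n)$ is \emph{not} continuous for a general element $a_n\in\OL$ (only elements of $C_c(G_\L)$ give continuous functions), so ``a continuous function supported on a closed set with empty interior vanishes'' does not literally apply. This is repairable: approximate $a_n$ by $b\in C_c(G_\L)$, use $\|j(a_n)-b\|_\infty\le\|a_n-b\|$, note $b$ is small on the dense complement of the support set and hence everywhere by continuity, and conclude $\|j(a_n)\|_\infty$ is arbitrarily small --- but as written the step is wrong. Second, and more seriously, the crux --- that condition (I) forces $H\cap c^{-1}(n)$ to have empty interior for $n\ne 0$ --- is asserted in one sentence. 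The needed statement is that every nonempty basic open subset of $X_\L$ (not of $X_\Lambda$; ``transporting through $\pi_\L$'' goes the wrong way, since $\pi_\L$ need not be open) contains a point $x$ with $\sigma_\Lambda^p(\pi_\L(x))\ne\sigma_\Lambda^q(\pi_\L(x))$. Condition (I) gives $|\Gamma_\infty^+(v)|\ge 2$ for each single vertex $v\in V_l$, i.e.\ two label paths in the Bratteli diagram from level $l$; to contradict the assumption that all continuations in $X_\L$ past a given finite path carry one fixed eventually periodic label sequence, you must show that label paths from $v_m(x)^L\in V_L$ are actually realizable as continuations in $X_\L$, which requires compatible vertex choices in the projective limit $\Omega_\L$ at \emph{every} level (a decreasing intersection of two-element sets can be a singleton). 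This realizability rests on the local property of $\lambda$-graph systems and is precisely where the cited reference spends its effort; ``iterate the branching'' does not yet deliver it.
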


\medskip

{\bf 5. The $C^*$-algebra $\OLmin$ of a normal subshift $\Lambda$.}
Let $\Lambda$ be a normal subshift.
Let $\LLmin$ be its minimal presentation of $\Lambda$
with transition matrix system  
$(A_{l,l+1}^\min, I_{l,l+1}^\min)_{l\in \Zp}$.
We note that the commutative $C^*$-subalgebra $\DLmin$
of $\OLmin$ is defined in \eqref{eq:DL} for $\L = {\frak L}^\min_\Lambda$.
Since the $\lambda$-graph system $\LLmin$ satisfies condition (I) (\cite{MaPre2020a}),  
we have the following proposition.
\begin{proposition}[{\cite[Proposition 3.9]{MaPre2020a}}] \label{prop:2.4}
The $C^*$-algebra $\OLmin$ is realized as a universal unique
$C^*$-algebra generated by partial isometries
$S_\alpha, \alpha \in \Sigma$ and projections
$E_i^l, i=1,2,\dots,m(l)$
subject to the relations
\begin{gather*}
\sum_{\beta \in \Sigma} S_\beta S_\beta^* = \sum_{j=1}^{m(l)} E_j^l = 1, \qquad
S_\alpha S_\alpha^* E_i^l = E_i^l S_\alpha S_\alpha^*, \\
E_i^l = \sum_{j=1}^{m(l+1)} I_{l,l+1}^\min(i,j) E_j^{l+1}, \qquad 
S_\alpha^* E_i^l S_\alpha = \sum_{j=1}^{m(l+1)} A_{l,l+1}^\min(i,\alpha, j) E_j^{l+1}, 
\end{gather*}
for $\alpha \in \Sigma, i=1,2,\dots,m(l).$
It satisfies $\DLam^\prime\cap\OLmin = \DLmin,$
where $\DLam^\prime\cap\OLmin$ is the $C^*$-subalgebra of $\OLmin$
consisting of elements of $\OLmin$ commuting with all elements of $\DLam$.
\end{proposition}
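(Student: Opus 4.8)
The plan is to deduce both assertions by specializing, to the minimal presentation $\LLmin$, the two general facts about left-resolving $\lambda$-graph systems recalled earlier: the structure theorem of \cite[Theorem 3.6, Theorem 4.3]{MaDocMath2002} for the universal uniqueness, and \cite[Proposition 3.7]{MaPre2020a} for the commutant identity. Accordingly, the entire argument reduces to checking that $\L = \LLmin$ satisfies the hypotheses of these results, namely that it is left-resolving and satisfies condition (I).

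First I would record the two required properties of $\LLmin$. That $\LLmin$ is left-resolving, and indeed predecessor-separated, is built into its construction from the $l$-past equivalence classes of $l$-synchronizing words reviewed in Section 2. The essential hypothesis is condition (I): for every vertex $v$ of $\LLmin$ the set $\Gamma_\infty^+(v)$ of infinite concatenated label sequences leaving $v$ must contain at least two distinct sequences. For a normal subshift this is guaranteed by \cite{MaPre2020a}, where nontriviality and irreducibility together with $\lambda$-synchronization of $\Lambda$ are used. These are precisely the inputs the general theory demands.

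With these hypotheses in place, I would apply the structure theorem to $\L = \LLmin$ equipped with its transition matrix system $(A_{l,l+1}^\min, I_{l,l+1}^\min)_{l\in\Zp}$. Since $\OLmin$ is by definition the reduced groupoid $C^*$-algebra $C^*_r(G_{\LLmin})$ and the \'etale groupoid $G_{\LLmin}$ is amenable (Section 2), the cited theorem applies directly: it yields that $\OLmin$ is generated by the partial isometries $S_\alpha$ and the projections $E_i^l$ subject to the displayed relations, and, because condition (I) holds, that it is the unique $C^*$-algebra subject to those relations. This gives the first assertion. For the commutant identity I would then invoke \cite[Proposition 3.7]{MaPre2020a} with the same $\L = \LLmin$: being left-resolving and satisfying condition (I), this system satisfies $\DLam^\prime \cap \OL = \DL$, which at $\L = \LLmin$ reads $\DLam^\prime \cap \OLmin = \DLmin$ via the identification \eqref{eq:DL}. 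This completes the argument.

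I do not expect a genuine obstacle here, since both assertions are corollaries of the cited general results once the hypotheses are verified. The only point requiring care is the verification of condition (I) for $\LLmin$, which is exactly where the normality of $\Lambda$---its nontriviality together with $\lambda$-synchronization---enters; granting that, as already stated in Section 2, nothing further is needed.
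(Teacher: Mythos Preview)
Your proposal is correct and matches the paper's own treatment: the paper does not give an independent proof but, immediately before stating the proposition, records that $\LLmin$ satisfies condition (I) and then cites the result from \cite{MaPre2020a}, so that it follows at once from the general Propositions~2.3 and~2.4 specialized to $\L=\LLmin$. Your write-up makes this deduction explicit in exactly the intended way.
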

The $C^*$-algebra $\OLmin$ is generated by only the partial isometries 
$S_1,\dots, S_N$, because $\LLmin$ is predecessor-separated,
so that the projection $E_i^l$ is written by using $S_1, \dots, S_N$ 
as in \eqref{eq:Eil}.

\section{Proof of Theorem \ref{thm:main} (i) $\Longrightarrow$ (ii)} 
In this setion, we will give a proof of 
Theorem \ref{thm:main} (i) $\Longrightarrow$ (ii).
Let us fix a left-resolving $\lambda$-graph system $\L$.
Consider the shift dynamical system $(X_\L,\sigma_\L)$
defined in the previous section.
For $f \in C(X_\L,\Z)$ and $n \in \Zp$, define 
$f^n \in C(X_\L,\Z)$ by setting for $x \in X_\L$
\begin{equation*}
f^n(x) := 
\begin{cases}
\sum_{i=0}^{n-1}f(\sigma_\L^i(x)) & \text{ for } n\ge 1, \\
0 & \text{ for } n =0.
\end{cases}
\end{equation*}
It is straightforward to check that the identity
\begin{equation*}
f^{n+m}(x) = f^n(x) + f^m(\sigma_\L^n(x)), \qquad x \in X_\L, \quad n,m \in \Zp
\end{equation*}
holds.
We provide several lemmas.
\begin{lemma}\label{lem:3.1}
For $f \in C(X_{\frak L},\Z)$, define a function 
$f_{G_{\frak L}}:G_{\frak L}\longrightarrow \Z$ by setting
\begin{equation}\label{eq:fGL}
f_{G_{\frak L}}(x, p-q,z) := f^p(x) - f^q(z),  \qquad (x, p-q,z)  \in G_{\frak L}.
\end{equation} 
Then $f_{G_{\frak L}}:G_{\frak L}\longrightarrow \Z$
is a continuous groupoid homomorphism.
\end{lemma}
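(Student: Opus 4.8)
The plan is to verify in turn the three assertions hidden in the statement: that $f_{G_\L}$ is well-defined (the formula uses a choice of representatives $p,q$ with $\sigma_\L^p(x)=\sigma_\L^q(z)$ and only $p-q$ is pinned down by the groupoid element), that it is multiplicative, and that it is continuous. Every step will rest on the cocycle identity $f^{n+m}(x)=f^n(x)+f^m(\sigma_\L^n(x))$ recorded just above the statement, so I would invoke it freely.

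First I would settle well-definedness. Suppose $(x,p-q,z)$ and $(x,p'-q',z)$ denote the same groupoid element, so that $\sigma_\L^p(x)=\sigma_\L^q(z)$, $\sigma_\L^{p'}(x)=\sigma_\L^{q'}(z)$, and $p-q=p'-q'$; interchanging the two representatives if necessary I may assume $p'\ge p$, whence $k:=p'-p=q'-q\ge 0$. The cocycle identity gives $f^{p'}(x)=f^p(x)+f^k(\sigma_\L^p(x))$ and $f^{q'}(z)=f^q(z)+f^k(\sigma_\L^q(z))$, and since $\sigma_\L^p(x)=\sigma_\L^q(z)$ the two correction terms agree and cancel on subtraction, leaving $f^{p'}(x)-f^{q'}(z)=f^p(x)-f^q(z)$. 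Thus the value depends only on the groupoid element.

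Next, for the homomorphism property I would take composable elements $(x,p-q,z)$ and $(z,r-s,w)$, with $\sigma_\L^p(x)=\sigma_\L^q(z)$ and $\sigma_\L^r(z)=\sigma_\L^s(w)$. Applying $\sigma_\L^r$ to the first equality and using the second yields $\sigma_\L^{p+r}(x)=\sigma_\L^{q+s}(w)$, so $(p+r,q+s)$ is an admissible representative of the product $(x,(p-q)+(r-s),w)$ and $f_{G_\L}$ of the product equals $f^{p+r}(x)-f^{q+s}(w)$. Expanding $f^{p+r}(x)=f^p(x)+f^r(\sigma_\L^p(x))=f^p(x)+f^r(\sigma_\L^q(z))$ and $f^{q+s}(w)=f^s(w)+f^q(\sigma_\L^s(w))=f^s(w)+f^q(\sigma_\L^r(z))$, the claim reduces to the single identity $f^r(\sigma_\L^q(z))+f^q(z)-f^r(z)=f^q(\sigma_\L^r(z))$. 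This is precisely the equality of the two cocycle expansions $f^{q+r}(z)=f^q(z)+f^r(\sigma_\L^q(z))$ and $f^{r+q}(z)=f^r(z)+f^q(\sigma_\L^r(z))$ of the same quantity. I expect this composability step to be the main obstacle, or at least the point most easily mishandled: one must produce the correct representative of the product and keep careful track of which base point each partial sum $f^{\bullet}$ is evaluated at, and the freedom to split $f^{q+r}(z)$ in two ways is exactly what forces multiplicativity.

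Finally, continuity is routine and I would dispatch it last. On a basic open set of the form described for $G_\L$, where $\sigma_\L^p|_U$ and $\sigma_\L^q|_V$ are homeomorphisms onto a common open range, the integers $p,q$ may be held fixed, so there $f_{G_\L}(x,p-q,z)=f^p(x)-f^q(z)$ is a finite $\Z$-linear combination of the continuous maps $x\mapsto f(\sigma_\L^i(x))$ and $z\mapsto f(\sigma_\L^j(z))$. As $f$ is integer-valued and continuous it is locally constant, hence $f_{G_\L}$ is locally constant on each such neighborhood, and therefore continuous on all of $G_\L$.
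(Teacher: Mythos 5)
Your proof is correct and follows essentially the same route as the paper's: both arguments verify well-definedness, multiplicativity, and continuity directly from the cocycle identity $f^{n+m}(x)=f^n(x)+f^m(\sigma_{\frak L}^n(x))$. The only differences are cosmetic --- you expand $f^{p+r}(x)-f^{q+s}(w)$ downward to the sum of the two values rather than collecting that sum upward as the paper does, you explicitly check that $(p+r,q+s)$ is an admissible representative of the product (left implicit in the paper), and you supply the local-constancy argument for continuity that the paper simply declares clear.
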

\begin{proof}
We will first see that the function $f_{G_{\frak L}}$ is well-defined.
Suppose that 
$ (x, p-q,z)=(x, p' -q', z)  \in G_{\frak L}$
so that 
$\sigma_{\L}^p(x) =\sigma_{\L}^q(z), \sigma_{\L}^{p'}(x) =\sigma_{\L}^{q'}(z).$
One may assume that 
$p\ge p'$ and $q\ge q'$ so that we have
$p-p' = q - q'.$
We then have
\begin{align*}
 &(f^p(x) - f^q(z)) -(f^{p'}(x) - f^{q'}(z)) \\
=&
 \sum_{i=p'}^{p-1}f(\sigma_\L^i(x))  - \sum_{j=q'}^{q-1}f(\sigma_\L^j(z))  
 =
\sum_{i=0}^{p-p'-1}f(\sigma_\L^{p'+i}(x))  - \sum_{j=0}^{q-q'-1}f(\sigma_\L^{q'+j}(z)) =0.
\end{align*}
We will next show that the function 
$f_{G_{\frak L}}:G_{\frak L}\longrightarrow \Z$
is a groupoid homomorphism.
For 
$ (x, p-q,z), (z, r-s, w)  \in G_{\frak L}$, we have 
$ (x, p-q,z)\cdot (z, r-s, w) =(x,p-q+r -s,w)$ and
\begin{align*}
& f_{G_\L}(x, p-q,z) + f_{G_\L} (z, r-s, w) \\
=& (f^p(x) - f^q(z) ) + ( f^r(z) - f^s(w) )\\
=& \{ (f^{p+r}(x)- f^r(\sigma_\L^p(x)))
      - (f^{q+r}(z)-f^r(\sigma_\L^q(z)))\} \\
&+
\{(f^{r+q}(z) -f^q(\sigma_\L^r(z))) - (f^{q+s}(w)- f^q(\sigma_\L^s(w)))\}\\
=& f^{p+r}(x) - f^{q+s}(w) =f_{G_\L}(x, (p+r)-(q+s), w).
\end{align*}
As its continuity is clear, the map
 $f_{G_{\frak L}}:G_{\frak L}\longrightarrow \Z$
gives rise to a continuous groupoid homomorphism.
\end{proof}

The following lemma is a direct consequence of Lemma \ref{lem:3.1}.
\begin{lemma}[{\cite[Proof of (i) $\Longrightarrow$ (ii) of Proposition 4.2]{MaDynam2020}}]
Let ${\frak L}_1,{\frak L}_2$ be left-resolving $\lambda$-graph systems.
Assume that there exist a homeomorphism
$h_{\L}: X_{{\frak L}_1}\longrightarrow X_{{\frak L}_2}$
and continuous maps
$k_i, l_i: X_{{\frak L}_i}\longrightarrow \Z, i=1,2$
satisfying
\begin{align}
\sigma_{\L_2}^{k_1(x)}(h_\L(\sigma_{\L_1}(x)))
 =\sigma_{\L_2}^{l_1(x)}(h_\L(x)), \qquad x \in X_{\L_1}, \label{eq:sigmaL2x}\\
\sigma_{\L_1}^{k_2(y)}(h_\L^{-1}(\sigma_{\L_2}(y)))
 =\sigma_{\L_1}^{l_2(y)}(h_\L^{-1}(y)), \qquad y \in X_{\L_2}. \label{eq:sigmaL1y}
\end{align}
Put $c_1(x) = l_1(x) - k_1(x)$
and $c_1^p(x) = \sum_{n=0}^{p-1}c_1(\sigma_{\L_1}^n(x))$ for $x \in X_{\L_1}.$ 
Then the map 
\begin{equation}\label{eq:varphi}
\varphi: (x,p-q,z) \in G_{\L_1}\longrightarrow (h_{\L}(x), c_1^p(x) - c_1^q(z), h_{\L}(z)) \in G_{\L_2}
\end{equation} 
gives rise to an isomorphism of \'etale groupoids such that 
the restriction $\varphi|_{G_{\L_1}^{(0)}} : G_{\L_1}^{(0)}\longrightarrow G_{\L_2}^{(0)}$
of $\varphi: G_{\L_1}\longrightarrow G_{\L_2}$ to  the unit space
$G_{\L_1}^{(0)}$ coincides with $h_\L: X_{\L_1}\longrightarrow X_{\L_2}$
under the natural identification between $G_{\L_i}^{(0)}$ and $X_{\L_i}, i=1,2$.
\end{lemma}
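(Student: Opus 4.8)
The plan is to verify that $\varphi$ is a well-defined continuous groupoid homomorphism, to produce an explicit inverse built symmetrically from $h_\L^{-1}$, and then to conclude that $\varphi$ is an isomorphism of \'etale groupoids restricting to $h_\L$ on the unit spaces. The one genuine input beyond bookkeeping will be an iterated form of the defining relations \eqref{eq:sigmaL2x} and \eqref{eq:sigmaL1y}.

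First I would record the iterated form of \eqref{eq:sigmaL2x}. Writing $k_1^p(x) = \sum_{n=0}^{p-1} k_1(\sigma_{\L_1}^n(x))$ and $l_1^p(x) = \sum_{n=0}^{p-1} l_1(\sigma_{\L_1}^n(x))$, I claim that
\[
\sigma_{\L_2}^{k_1^p(x)}(h_\L(\sigma_{\L_1}^p(x))) = \sigma_{\L_2}^{l_1^p(x)}(h_\L(x)), \qquad x \in X_{\L_1},\ p \in \Zp .
\]
This follows by induction on $p$: the case $p=1$ is \eqref{eq:sigmaL2x}, and the inductive step is obtained by applying \eqref{eq:sigmaL2x} at the point $\sigma_{\L_1}^p(x)$ and then applying a suitable power of $\sigma_{\L_2}$ to the induction hypothesis, using the additive identities $k_1^{p+1}(x) = k_1^p(x) + k_1(\sigma_{\L_1}^p(x))$ and its analogue for $l_1^p$. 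Since $c_1 = l_1 - k_1$, this gives $c_1^p(x) = l_1^p(x) - k_1^p(x)$.

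Next, for $(x, p-q, z) \in G_{\L_1}$ with $\sigma_{\L_1}^p(x) = \sigma_{\L_1}^q(z) =: w$, applying the identity above to $x$ at $p$ and to $z$ at $q$, and composing both with the appropriate powers of $\sigma_{\L_2}$, yields
\[
\sigma_{\L_2}^{l_1^p(x) + k_1^q(z)}(h_\L(x)) = \sigma_{\L_2}^{k_1^p(x) + l_1^q(z)}(h_\L(z)),
\]
whose two exponents differ exactly by $c_1^p(x) - c_1^q(z)$. Hence $\varphi(x,p-q,z) = (h_\L(x), c_1^p(x) - c_1^q(z), h_\L(z))$ indeed lands in $G_{\L_2}$ (adding a common shift to both exponents to render them nonnegative if necessary). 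Well-definedness of the middle entry — its independence of the choice of $p,q$ representing $p-q$ — and its continuity are precisely Lemma \ref{lem:3.1} applied to $f = c_1$, which also shows that $(x, p-q, z) \mapsto c_1^p(x) - c_1^q(z)$ is a groupoid homomorphism into $\Z$. Combined with the fact that $h_\L$ is applied separately to the range and source coordinates, this gives at once that $\varphi$ is a continuous groupoid homomorphism.

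To obtain bijectivity I would construct the candidate inverse $\psi \colon G_{\L_2} \to G_{\L_1}$, $\psi(y,r-s,w) = (h_\L^{-1}(y), c_2^r(y) - c_2^s(w), h_\L^{-1}(w))$, where $c_2 = l_2 - k_2$ and $c_2^r$ is built from $k_2,l_2$ via \eqref{eq:sigmaL1y} exactly as above; by the symmetric argument $\psi$ is a continuous groupoid homomorphism. The crux is then to check $\psi \circ \varphi = \id$ on $G_{\L_1}$ and $\varphi \circ \psi = \id$ on $G_{\L_2}$. On the unit spaces this is immediate from $h_\L^{-1}\circ h_\L = \id$. For the middle $\Z$-coordinate one must show that the composite cocycle collapses to the canonical grading $(x,p-q,z)\mapsto p-q$, and this is where both relations \eqref{eq:sigmaL2x} and \eqref{eq:sigmaL1y} have to be used together, feeding the iterated identity for $h_\L$ and its analogue for $h_\L^{-1}$ into one another. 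I expect this integer-component computation to be the main obstacle, since it is the only place where the full force of both eventual-conjugacy relations (not just one of them) is needed. Once it is settled, $\varphi$ and $\psi$ are mutually inverse bijections; continuity of both, together with their compatibility with the open bisection neighborhood basis of the groupoids $G_{\L_i}$, makes $\varphi$ an isomorphism of \'etale groupoids, and by construction its restriction to $G_{\L_1}^{(0)}$ is $h_\L$.
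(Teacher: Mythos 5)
Your first half reproduces exactly what the paper does: the paper dispatches this lemma with the single remark that it is a ``direct consequence of Lemma \ref{lem:3.1}'', meaning precisely your step of applying Lemma \ref{lem:3.1} to $f=c_1$ to obtain that $(x,p-q,z)\mapsto c_1^p(x)-c_1^q(z)$ is a well-defined continuous $\Z$-valued cocycle, together with the iterated relations \eqref{eq:sigmaL2px}, \eqref{eq:sigmaL1py} (which the paper records immediately afterwards, by the same induction you describe) to check that $\varphi$ actually lands in $G_{\L_2}$; your identification of the two exponents $l_1^p(x)+k_1^q(z)$ and $k_1^p(x)+l_1^q(z)$, whose difference is $c_1^p(x)-c_1^q(z)$, is the same computation that reappears in Lemma \ref{lem:3.4}. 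Up to that point the proposal is correct and matches the source.

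The gap is in the bijectivity. You build the symmetric candidate inverse $\psi$ and then state that you ``expect'' the verification that the composite cocycle collapses to $p-q$ to be the main obstacle, to be used ``once it is settled''. That verification is the actual content of the claim and cannot be deferred: the integer entry of a groupoid element is not determined by its endpoints (the isotropy group at an eventually periodic point is a nontrivial subgroup of $\Z$), so bijectivity of $h_\L$ alone says nothing about the middle coordinate, and $\psi\circ\varphi=\id$ amounts to the identity $c_2^{\,l_1^p(x)+k_1^q(z)}(h_\L(x))-c_2^{\,k_1^p(x)+l_1^q(z)}(h_\L(z))=p-q$, equivalently to identities of the type $c_2^{l_1(x)}(h_\L(x))-c_2^{k_1(x)}(h_\L(\sigma_{\L_1}(x)))=1$. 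In the Markov-shift literature these are established either by a direct computation feeding \eqref{eq:sigmaL2x} and \eqref{eq:sigmaL1y} into each other, or by first proving them on the dense set of points with trivial isotropy and extending by continuity; your proposal neither states nor proves them. To be fair, the paper does not prove this step on the spot either---it imports the whole lemma from the proof of Proposition 4.2 of \cite{MaDynam2020}---but as a self-contained argument your write-up stops exactly at the one place where both hypotheses must genuinely interact.
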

Keep the above situation.
Similarly to \cite[Lemma 5.1]{MaPacific2010},
the identities for $p\in \N$ below 
follow from \eqref{eq:sigmaL2x}, \eqref{eq:sigmaL1y} by induction on $p$:
\begin{align}
\sigma_{\L_2}^{k_1^p(x)}(h_\L(\sigma_{\L_1}^p(x)))
 =\sigma_{\L_2}^{l_1^p(x)}(h_\L(x)), \qquad x \in X_{\L_1}, \label{eq:sigmaL2px}\\
\sigma_{\L_1}^{k_2^p(y)}(h_\L^{-1}(\sigma_{\L_2}^p(y)))
 =\sigma_{\L_1}^{l_2^p(y)}(h_\L^{-1}(y)), \qquad y \in X_{\L_2}.\label{eq:sigmaL1py}
\end{align}
Define a map
$
\Psi_{h_\L}:C(X_{\L_2},\Z) \longrightarrow C(X_{\L_1},\Z)
$
by
\begin{equation*}
\Psi_{h_\L}(f)(x) = 
\sum_{i=0}^{l_1(x)} f(\sigma_{\L_2}^i(h_\L(x)))
-
\sum_{j=0}^{k_1(x)} f(\sigma_{\L_2}^j(h_\L(\sigma_{\L_1}(x)))),
\qquad f \in C(X_{\L_2},\Z), \, \,  x \in X_{\L_1}.
\end{equation*}

\begin{lemma}
Keep the above notation.
For $p \in \N$, we have
\begin{equation}
\Psi_{h_\L}(f)^p(x) = f^{l_1^p(x)}(h_\L(x))
- f^{k_1^p(x)} (h_\L(\sigma_{\L_1}^p(x))),
\qquad f \in C(X_{\L_2},\Z), \quad x \in X_{\L_1}. \label{eq:Psihlambdafp}
\end{equation}
\end{lemma}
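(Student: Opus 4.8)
The plan is to argue by induction on $p$, the engine being the additive cocycle identity $f^{n+m}(x) = f^n(x) + f^m(\sigma_{\L_1}^n(x))$ for $\Psi_{h_\L}(f)$, the analogous cocycle relations $l_1^{p+1}(x) = l_1^p(x) + l_1(\sigma_{\L_1}^p(x))$ and $k_1^{p+1}(x) = k_1^p(x) + k_1(\sigma_{\L_1}^p(x))$ for the $p$-fold sums, and the shift relations \eqref{eq:sigmaL2x} and \eqref{eq:sigmaL2px}. For the base case $p=1$, the desired identity \eqref{eq:Psihlambdafp} is nothing but the definition of $\Psi_{h_\L}(f)$ rewritten through the notation $f^{\,\bullet}$, since $l_1^1 = l_1$, $k_1^1 = k_1$, and $\sigma_{\L_1}^1 = \sigma_{\L_1}$.

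For the inductive step I assume the formula at $p$. Applying the cocycle identity for $\Psi_{h_\L}(f)$ gives $\Psi_{h_\L}(f)^{p+1}(x) = \Psi_{h_\L}(f)^p(x) + \Psi_{h_\L}(f)(\sigma_{\L_1}^p(x))$. I substitute the inductive hypothesis into the first summand and the base case (evaluated at $\sigma_{\L_1}^p(x)$) into the second. This produces four terms evaluated at the three points $h_\L(x)$, $h_\L(\sigma_{\L_1}^p(x))$ and $h_\L(\sigma_{\L_1}^{p+1}(x))$. Abbreviating $y = h_\L(x)$, $w = h_\L(\sigma_{\L_1}^p(x))$, $u = h_\L(\sigma_{\L_1}^{p+1}(x))$, I must show that $f^{l_1^p(x)}(y) - f^{k_1^p(x)}(w) + f^{l_1(\sigma_{\L_1}^p(x))}(w) - f^{k_1(\sigma_{\L_1}^p(x))}(u)$ equals the target $f^{l_1^{p+1}(x)}(y) - f^{k_1^{p+1}(x)}(u)$.

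The decisive moves are to expand the two target terms by the cocycle identity along the above relations for $l_1^{p+1}$ and $k_1^{p+1}$, and then to invoke \eqref{eq:sigmaL2px} (at $x$) in the form $\sigma_{\L_2}^{l_1^p(x)}(y) = \sigma_{\L_2}^{k_1^p(x)}(w)$ together with \eqref{eq:sigmaL2x} (at $\sigma_{\L_1}^p(x)$) in the form $\sigma_{\L_2}^{k_1(\sigma_{\L_1}^p(x))}(u) = \sigma_{\L_2}^{l_1(\sigma_{\L_1}^p(x))}(w)$. These two equalities are exactly what is needed to relocate every evaluation point onto $w$; after the terms at $y$ and $u$ cancel, the identity collapses to the symmetric two-index relation $f^a(w) - f^b(w) = f^a(\sigma_{\L_2}^b(w)) - f^b(\sigma_{\L_2}^a(w))$ with $a = l_1(\sigma_{\L_1}^p(x))$ and $b = k_1^p(x)$. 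This last relation is immediate from writing $f^{a+b}(w)$ in the two ways $f^a(w) + f^b(\sigma_{\L_2}^a(w))$ and $f^b(w) + f^a(\sigma_{\L_2}^b(w))$ and equating.

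The main obstacle is purely organizational rather than conceptual: because the four terms live at three different base points, one cannot simply cancel them, and the only mechanism for moving them to a common base point is the pair of shift relations \eqref{eq:sigmaL2x} and \eqref{eq:sigmaL2px}. Keeping track of which relation applies to which term, and of the order of the indices ($l$ before $k$), is where care is required; once the base points are aligned, everything reduces to the elementary symmetric cocycle identity above. I note that the same scheme, driven instead by \eqref{eq:sigmaL1y} and \eqref{eq:sigmaL1py}, yields the corresponding identity for the map in the reverse direction, should it be needed later.
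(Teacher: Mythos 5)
Your proof is correct and follows essentially the same route as the paper: induction on $p$, the cocycle identity for $f^{\bullet}$, and the relations \eqref{eq:sigmaL2x}, \eqref{eq:sigmaL2px} used to relocate every term to the common base point $h_\L(\sigma_{\L_1}^p(x))$, after which only the elementary symmetric identity $f^a(\sigma_{\L_2}^b(w)) - f^b(\sigma_{\L_2}^a(w)) = f^a(w) - f^b(w)$ remains. One small correction: the base case $p=1$ is not purely definitional, because the sums defining $\Psi_{h_\L}(f)$ run up to $l_1(x)$ and $k_1(x)$ \emph{inclusive} and hence equal $f^{l_1(x)+1}(h_\L(x))$ and $f^{k_1(x)+1}(h_\L(\sigma_{\L_1}(x)))$, so one must invoke \eqref{eq:sigmaL2x} once more to cancel the two boundary terms --- exactly as the paper notes when it says the case $p=1$ follows ``by definition together with \eqref{eq:sigmaL2x}''.
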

\begin{proof}
We fix $f \in C(X_{\L_2},\Z)$ and $x \in X_{\L_1}.$
We will prove the equality \eqref{eq:Psihlambdafp} by induction on $p$.
For $p=1$, the desired equality follows by definition
together with \eqref{eq:sigmaL2x}.
Suppose that the equality holds for $p$. 
We then have
\begin{align*}
\Psi_{h_\L}(f)^{p+1}(x)
=& \Psi_{h_\L}(f)^p(x) + \Psi_{h_\L}(f)(\sigma_{\L_1}^p(x)) \\
=& \{ f^{l_1^p(x)}(h_\L(x))
- f^{k_1^p(x)} (h_\L(\sigma_{\L_1}^p(x))) \} \\
 & + \{ f^{l_1(\sigma_{\L_1}^p(x))}(h_\L(\sigma_{\L_1}^p(x)))
- f^{k_1(\sigma_{\L_1}^p(x))} (h_\L(\sigma_{\L_1}^{p+1}(x)))\}. 
\end{align*}
On the other hand, we have 
\begin{align*}
 & f^{l_1^{p+1}(x)}(h_\L(x))
- f^{k_1^{p+1}(x)} (h_\L(\sigma_{\L_1}^{p+1}(x))) \\
= &
f^{l_1^p(x) + l_1(\sigma_{\L_1}^p(x))}(h_\L(x))
- f^{k_1^p(x) + k_1(\sigma_{\L_1}^p(x))} (h_\L(\sigma_{\L_1}^{p+1}(x))) \\
= &
f^{l_1^p(x)}(h_\L(x)) +
f^{l_1(\sigma_{\L_1}^p(x))}(\sigma_{\L_2}^{l_1^p(x)}(h_\L(x))) \\
& - \{ 
f^{k_1(\sigma_{\L_1}^p(x))} (h_\L(\sigma_{\L_1}^{p+1}(x))) 
+ 
f^{k_1^p(x)}(\sigma_{\L_2}^{k_1(\sigma_{\L_1}^p(x))}(h_\L(\sigma_{\L_1}^{p+1}(x)))) \}. 
\end{align*}
By using \eqref{eq:sigmaL2px} and \eqref{eq:sigmaL1py}, 
the second term minus the fourth term above goes to 
\begin{align*}
& f^{l_1(\sigma_{\L_1}^p(x))}(\sigma_{\L_2}^{l_1^p(x)}(h_\L(x)))
- f^{k_1^p(x)}(\sigma_{\L_2}^{k_1(\sigma_{\L_1}^p(x))}(h_\L(\sigma_{\L_1}^{p+1}(x)))) \\
= &  f^{l_1(\sigma_{\L_1}^p(x))}(\sigma_{\L_2}^{k_1^p(x)}(h_\L(\sigma_{\L_1}^p(x))))
- f^{k_1^p(x)}(\sigma_{\L_2}^{l_1(\sigma_{\L_1}^p(x))}(h_\L(\sigma_{\L_1}^{p}(x)))) \\
= & \{ f^{l_1(\sigma_{\L_1}^p(x)) + k_1^p(x)}(h_\L(\sigma_{\L_1}^p(x)))
      - f^{k_1^p(x))}(h_\L(\sigma_{\L_1}^p(x)))\} \\
 & -\{ f^{k_1^p(x)+ l_1(\sigma_{\L_1}^p(x))}(h_\L(\sigma_{\L_1}^{p}(x))) 
       -f^{l_1(\sigma_{\L_1}^p(x))}(h_\L(\sigma_{\L_1}^{p}(x))) \} \\
= & f^{l_1(\sigma_{\L_1}^p(x))}(h_\L(\sigma_{\L_1}^{p}(x))) - f^{k_1^p(x))}(h_\L(\sigma_{\L_1}^p(x))).      
\end{align*}
 Therefore we have 
\begin{equation*}
\Psi_{h_\L}(f)^{p+1}(x) = f^{l_1^{p+1}(x)}(h_\L(x))
- f^{k_1^{p+1}(x)} (h_\L(\sigma_{\L_1}^{p+1}(x))).
\end{equation*}
\end{proof}
\begin{lemma}\label{lem:3.4}
Keep the above notation.
Let $\varphi: G_{\L_1}\longrightarrow G_{\L_2}$
be the groupoid isomorphism defined by \eqref{eq:varphi}.
For $f \in C(X_{\L_2},\Z)$, 
let $f_{G_\L}:G_{\L_2}\longrightarrow \Z$
be the groupoid homomorphism defined by \eqref{eq:fGL}.
Then we have the equality
\begin{equation}
(f_{G_\L}\circ \varphi)(x, p-q,z) = \Psi_{h_\L}(f)^p(x) - \Psi_{h_\L}(f)^q(z), 
\qquad (x,p-q,z) \in G_{\L_1}.
\end{equation}
Hence we have 
$f_{G_\L}\circ \varphi = \Psi_{h_\L}(f)_{G\L}.$
\end{lemma}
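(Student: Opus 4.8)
The plan is to prove the identity by direct computation, reducing the claim to the content of the previous lemma.

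First I would unwind the left-hand side. By the definition \eqref{eq:fGL} of $f_{G_\L}$, applied at the point $\varphi(x,p-q,z) \in G_{\L_2}$, and using the explicit form \eqref{eq:varphi} of $\varphi$, namely $\varphi(x,p-q,z) = (h_\L(x),\, c_1^p(x) - c_1^q(z),\, h_\L(z))$, I get
\begin{equation*}
(f_{G_\L}\circ\varphi)(x,p-q,z) = f^{c_1^p(x)}(h_\L(x)) - f^{c_1^q(z)}(h_\L(z)).
\end{equation*}
The decomposition of the cocycle index as a difference $c_1^p(x) - c_1^q(z)$ matches the $p - q$ form expected by $f_{G_\L}$ because $c_1(x) = l_1(x) - k_1(x)$, so $c_1^p(x) = l_1^p(x) - k_1^p(x)$ by the additivity of the sums defining the superscripts. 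Substituting this, the first term becomes $f^{l_1^p(x) - k_1^p(x)}(h_\L(x))$, and I would expand it using the additive property of $f^{(\cdot)}$ together with the relation \eqref{eq:sigmaL2px}, which says $\sigma_{\L_2}^{k_1^p(x)}(h_\L(\sigma_{\L_1}^p(x))) = \sigma_{\L_2}^{l_1^p(x)}(h_\L(x))$.

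The key step is then to recognize that the right-hand side is exactly the content of the preceding lemma. By \eqref{eq:Psihlambdafp}, we have
\begin{equation*}
\Psi_{h_\L}(f)^p(x) = f^{l_1^p(x)}(h_\L(x)) - f^{k_1^p(x)}(h_\L(\sigma_{\L_1}^p(x))),
\end{equation*}
and likewise for $z$ with $q$ in place of $x,p$. So the whole assertion amounts to checking that my expression for $(f_{G_\L}\circ\varphi)(x,p-q,z)$ coincides with $\Psi_{h_\L}(f)^p(x) - \Psi_{h_\L}(f)^q(z)$. The subtraction $\Psi_{h_\L}(f)^p(x) - \Psi_{h_\L}(f)^q(z)$, when each term is written via \eqref{eq:Psihlambdafp}, produces four $f^{(\cdot)}$-terms, and I would match these against $f^{c_1^p(x)}(h_\L(x)) - f^{c_1^q(z)}(h_\L(z))$ by invoking \eqref{eq:sigmaL2px} to convert the $k_1^p(x)$-indexed term at $h_\L(\sigma_{\L_1}^p(x))$ into an index shift of $h_\L(x)$, after which the additive cocycle identity for $f^{(\cdot)}$ collapses the pair into the single term $f^{l_1^p(x)-k_1^p(x)}(h_\L(x)) = f^{c_1^p(x)}(h_\L(x))$.

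The main obstacle, such as it is, is purely bookkeeping: keeping the base points of the various $f^{(\cdot)}$-evaluations straight when applying \eqref{eq:sigmaL2px}, since the superscript of one term must be read as the number of shifts applied to the base point of a neighboring term. This is exactly the manipulation already carried out in the inductive step of the previous lemma, so once the identification with $\Psi_{h_\L}(f)^p(x) - \Psi_{h_\L}(f)^q(z)$ is made, nothing further is needed. The concluding sentence $f_{G_\L}\circ\varphi = \Psi_{h_\L}(f)_{G\L}$ is then immediate: comparing with \eqref{eq:fGL} applied to the function $\Psi_{h_\L}(f)$ on $G_{\L_1}$, the right-hand side $\Psi_{h_\L}(f)^p(x) - \Psi_{h_\L}(f)^q(z)$ is by definition $\Psi_{h_\L}(f)_{G_{\L_1}}(x,p-q,z)$, so the two groupoid homomorphisms agree on every element of $G_{\L_1}$.
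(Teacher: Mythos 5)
There is a genuine gap at the very first step. Your opening identity
\begin{equation*}
(f_{G_\L}\circ\varphi)(x,p-q,z) = f^{c_1^p(x)}(h_\L(x)) - f^{c_1^q(z)}(h_\L(z))
\end{equation*}
is not an instance of the definition \eqref{eq:fGL}. That definition evaluates $f_{G_{\L_2}}(y,n,w)$ as $f^{P}(y)-f^{Q}(w)$ only for pairs $(P,Q)$ of \emph{nonnegative} integers satisfying $\sigma_{\L_2}^{P}(y)=\sigma_{\L_2}^{Q}(w)$, and Lemma \ref{lem:3.1} guarantees independence of the choice only among such admissible pairs. The pair $\bigl(c_1^p(x),\,c_1^q(z)\bigr)$ need not be admissible: since $c_1=l_1-k_1$, the value $c_1^p(x)$ can be negative, in which case $f^{c_1^p(x)}$ is not even defined (recall $f^n$ is defined only for $n\in\Zp$); and even when both values are nonnegative, the relation $\sigma_{\L_2}^{c_1^p(x)}(h_\L(x))=\sigma_{\L_2}^{c_1^q(z)}(h_\L(z))$ does not follow from \eqref{eq:sigmaL2px}, because one cannot cancel $\sigma_{\L_2}^{k_1^p(x)}$ from both sides of $\sigma_{\L_2}^{l_1^p(x)}(h_\L(x))=\sigma_{\L_2}^{k_1^p(x)}(h_\L(\sigma_{\L_1}^p(x)))$ — the shift is not injective. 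For the same reason, the subsequent plan to ``expand $f^{l_1^p(x)-k_1^p(x)}(h_\L(x))$ using the additive property'' does not apply, since that cocycle identity is stated for nonnegative exponents.

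The missing idea — which is precisely the content of the first half of the paper's computation — is to exhibit an admissible pair representing the cocycle value: take $P=l_1^p(x)+k_1^q(z)$ and $Q=l_1^q(z)+k_1^p(x)$. Then $P-Q=c_1^p(x)-c_1^q(z)$, and applying \eqref{eq:sigmaL2px} at $x$ (with exponent $p$) and at $z$ (with exponent $q$), together with $\sigma_{\L_1}^p(x)=\sigma_{\L_1}^q(z)$, shows $\sigma_{\L_2}^{P}(h_\L(x))=\sigma_{\L_2}^{Q}(h_\L(z))$, so that
$(f_{G_\L}\circ\varphi)(x,p-q,z)=f^{P}(h_\L(x))-f^{Q}(h_\L(z))$ is a legitimate application of \eqref{eq:fGL}. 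From that point on, your strategy is the paper's: expand with the cocycle identity, use \eqref{eq:sigmaL2px} to move base points, and match the four resulting terms against \eqref{eq:Psihlambdafp}. So the bookkeeping you describe is fine, but without the correct choice of $(P,Q)$ the computation never legitimately starts.
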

\begin{proof}
For $(x,p-q,z) \in G_{\L_1}$, we have
\begin{align*}
 & (f_{G_\L}\circ \varphi)(x, p-q,z) \\
=&
f_{G_\L}(h_{\L}(x), c_1^p(x)-c_1^q(z), h_{\L}(z)) \\ 
=&
f_{G_\L}(h_{\L}(x), (l_1^p(x)+k_1^q(z) )- ( l_1^q(z) +k_1^p(x) ), h_{\L}(z)) \\ 
=&
f^{ l_1^p(x)+k_1^q(z) }(h_{\L}(x)) - f^{ l_1^q(z) +k_1^p(x) }(h_{\L}(z)) \\ 
=&
\{f^{l_1^p(x)}(h_{\L}(x))+f^{k_1^q(z)}(\sigma_{\L_2}^{l_1^p(x)}(h_\L(x))) \}\\
 & -\{ f^{l_1^q(z)}(h_{\L}(z)) + f^{k_1^p(x)}(\sigma_{\L_2}^{l_1^q(z)}(h_{\L}(z)))\} \\ 
=&
\{f^{l_1^p(x)}(h_{\L}(x))+f^{k_1^q(z)}(\sigma_{\L_2}^{k_1^p(x)}(h_\L(\sigma_{\L_1}^p(x))))\}\\
 & -\{ f^{l_1^q(z)}(h_{\L}(z)) + f^{k_1^p(x)}(\sigma_{\L_2}^{k_1^q(z)}(h_{\L}(\sigma_{\L_1}^q(z))))\} \\ 
=&
f^{l_1^p(x)}(h_{\L}(x))
+\{ f^{k_1^q(z)+k_1^p(x)}(h_\L(\sigma_{\L_1}^p(x)))
- f^{k_1^p(x)}(h_\L(\sigma_{\L_1}^p(x)))\}\\
& - f^{l_1^q(z)}(h_{\L}(z)) 
-\{ f^{k_1^p(x)+k_1^q(z)}(h_{\L}(\sigma_{\L_1}^q(z)))
- f^{k_1^q(z)}(h_{\L}(\sigma_{\L_1}^q(z)))\} \\ 
=&
f^{l_1^p(x)}(h_{\L}(x))
- f^{k_1^p(x)}(h_\L(\sigma_{\L_1}^p(x))) - f^{l_1^q(z)}(h_{\L}(z)) 
+ f^{k_1^q(z)}(h_{\L}(\sigma_{\L_1}^q(z))) \\ 
=& \Psi_{h_\L}(f)^p(x) - \Psi_{h_\L}(f)^q(z) 
= \Psi_{h_\L}(f)_{G_{\L}}(x, p-q,z).
\end{align*}
\end{proof}
For $f \in C(X_\L,\Z)$ and the groupoid homomorphism
 $f_{G_{\frak L}}:G_{\frak L}\longrightarrow \Z$ defined in \eqref{eq:fGL},
we define one-parameter unitaries $U_t(f), t \in \T $ on $\ell^2(G_\L)$ by setting
\begin{equation}
[U_t(f)\xi](x, n,z) =
\exp{(2\pi\sqrt{-1}f_{G_{\frak L}}(x,n,z)t)} \xi(x,n,z),
\quad \xi \in \ell^2(G_\L),\, \,  t \in \T. 
\end{equation}
As in \cite{MaDynam2020},
the one-parameter unitaries satisfy the condition
$U_t(f) X U_t(f)^* \in\OL$ for $X \in \OL$,
the family $U_t(f), t \in \T$ gives rise to an action of $\T$ on the $C^*$-algebra
$\OL$ by 
$$
\rho^{\L,f}_t = \Ad(U_t(f)) \in \Aut(\OL), \qquad t \in \T.
$$
The automorphism $\rho^{\L,f}_t$ is often writen 
as $\rho^{\L,f_{G_\L}}_t$ for the groupoid homomorphism
$f_{G_\L}:G_\L\longrightarrow \Z$.
We then have the following proposition.
\begin{proposition}\label{prop:3.8}
Let $\L_1, \L_2$ be left-resolving $\lambda$-graph systems.
Assume that 
there exist a homeomorphism
$h_{\L}: X_{{\frak L}_1}\longrightarrow X_{{\frak L}_2}$
and continuous maps
$k_{i}, l_{i}: X_{\L_i}\longrightarrow \Z, i=1,2$
such that 
\begin{align*}
\sigma_{\L_2}^{k_{1}(x)}(h_\L(\sigma_{\L_1}(x)))
 =\sigma_{\L_2}^{l_{1}(x)}(h_\L(x)), \qquad x \in X_{\L_1}, \\
\sigma_{\L_1}^{k_{2}(y)}(h_\L^{-1}(\sigma_{\L_2}(y)))
 =\sigma_{\L_1}^{l_{2}(y)}(h_\L^{-1}(y)), \qquad y \in X_{\L_2}.
\end{align*}
Then there exists 
an isomorphism $\Phi: \mathcal{O}_{\L_1}\longrightarrow \mathcal{O}_{\L_2}$
of $C^*$-algebras such that 
$\Phi(\mathcal{D}_{\L_1}) = \mathcal{D}_{\L_2}$
with $\Phi(g) = g \circ h_\L^{-1}$ for $g \in \D_{\L_1}$
and
\begin{equation}\label{eq:3.20}
\Phi \circ \rho^{ {\L_1},{\Psi_{h_\L}(f)}}_t 
= \rho^{{\L_2}, f}_t\circ \Phi,
\qquad f \in C(X_{\L_2},\Z), \, t \in \T.
\end{equation}
\end{proposition}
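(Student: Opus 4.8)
The plan is to obtain $\Phi$ as the $C^*$-algebra isomorphism induced by the \'etale groupoid isomorphism $\varphi : G_{\L_1} \to G_{\L_2}$ of \eqref{eq:varphi}, and then to read off the two required properties from the behaviour of $\varphi$ on the unit space and from the cocycle identity of Lemma \ref{lem:3.4}. The combinatorial heart of the matter has already been isolated in Lemmas \ref{lem:3.1}--\ref{lem:3.4}, so what remains is largely the formal transport of the gauge action through $\varphi$.

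First I would invoke functoriality of the reduced groupoid $C^*$-algebra. Since $\varphi$ is an isomorphism of \'etale groupoids, composition with $\varphi^{-1}$ gives a $*$-isomorphism $\Phi_0 : C_c(G_{\L_1}) \to C_c(G_{\L_2})$, $\Phi_0(F) = F \circ \varphi^{-1}$; it preserves the convolution product and the involution because these are defined purely through the groupoid operations that $\varphi$ respects, and, since $\varphi$ is a homeomorphism preserving the counting (Haar) systems of the \'etale structure, $\Phi_0$ extends to an isomorphism $\Phi : \mathcal{O}_{\L_1} = C^*_r(G_{\L_1}) \to C^*_r(G_{\L_2}) = \mathcal{O}_{\L_2}$. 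Because $\varphi$ carries $G_{\L_1}^{(0)}$ onto $G_{\L_2}^{(0)}$ with $\varphi|_{G_{\L_1}^{(0)}} = h_\L$, the subalgebra $\mathcal{D}_{\L_1} \cong C(G_{\L_1}^{(0)})$ is mapped onto $\mathcal{D}_{\L_2} \cong C(G_{\L_2}^{(0)})$, and for $g \in \mathcal{D}_{\L_1}$ one gets $\Phi(g) = g \circ h_\L^{-1}$. This gives the first assertion.

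Next I would record how the gauge automorphisms act on kernels. For $f \in C(X_\L,\Z)$ the unitary $U_t(f)$ acts on $\ell^2(G_\L)$ as multiplication by $\exp(2\pi\sqrt{-1}\,f_{G_\L}(\cdot)\,t)$; combining this with the definition of the convolution product and the cocycle identity $f_{G_\L}(\gamma) = f_{G_\L}(s) + f_{G_\L}(s^{-1}\gamma)$, valid since $f_{G_\L}$ is a groupoid homomorphism by Lemma \ref{lem:3.1}, one finds
\begin{equation*}
\rho^{\L,f}_t(\pi(F)) = \pi\bigl(\exp(2\pi\sqrt{-1}\,f_{G_\L}(\cdot)\,t)\,F\bigr), \qquad F \in C_c(G_\L).
\end{equation*}
Thus each $\rho^{\L,f}_t$ is implemented on the kernels $F$ by pointwise multiplication with the phase attached to the cocycle $f_{G_\L}$.

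Finally I would check the intertwining relation \eqref{eq:3.20} on the dense subalgebra $C_c(G_{\L_1})$ and extend by continuity. Fix $f \in C(X_{\L_2},\Z)$ and $F \in C_c(G_{\L_1})$. Evaluating $\Phi\bigl(\rho^{\L_1,\Psi_{h_\L}(f)}_t(F)\bigr)$ at $\gamma \in G_{\L_2}$ gives $\exp(2\pi\sqrt{-1}\,\Psi_{h_\L}(f)_{G_\L}(\varphi^{-1}(\gamma))\,t)\,F(\varphi^{-1}(\gamma))$, whereas $\rho^{\L_2,f}_t(\Phi(F))$ evaluates to $\exp(2\pi\sqrt{-1}\,f_{G_\L}(\gamma)\,t)\,F(\varphi^{-1}(\gamma))$. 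The two coincide exactly when $\Psi_{h_\L}(f)_{G_\L} = f_{G_\L}\circ\varphi$, which is precisely the content of Lemma \ref{lem:3.4}. I do not anticipate a genuine obstacle at this stage; the only point demanding care is the bookkeeping of directions, namely that $\Phi$ is induced by $\varphi^{-1}$, so that the cocycle on $G_{\L_1}$ matching $f$ on $G_{\L_2}$ is $f_{G_\L}\circ\varphi$, which Lemma \ref{lem:3.4} identifies with $\Psi_{h_\L}(f)_{G_\L}$ rather than with any inverse of it.
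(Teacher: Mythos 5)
Your proposal is correct and follows essentially the same route as the paper: both pass through the groupoid isomorphism $\varphi$ of \eqref{eq:varphi} and the identity $f_{G_\L}\circ\varphi = \Psi_{h_\L}(f)_{G_\L}$ of Lemma \ref{lem:3.4}. The only difference is that the paper delegates the transport of the gauge action through $\varphi$ to a citation of \cite[Proposition 3.3]{MaDynam2020}, whereas you verify that step directly on $C_c(G_{\L_1})$ via the cocycle computation $\rho^{\L,f}_t(\pi(F)) = \pi\bigl(\exp(2\pi\sqrt{-1}\,f_{G_\L}(\cdot)\,t)F\bigr)$, which is a sound and self-contained substitute.
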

\begin{proof}
By Lemma \ref{lem:3.1} and Lemma \ref{lem:3.4},
there exists a groupoid isomorphism
$\varphi:G_{\L_1}\longrightarrow G_{\L_2}$
satisfying  
\begin{equation} \label{eq:fglvarphi}
f_{G_\L}\circ\varphi = {\Phi_{h_\L}(f)}_{G_\L},
\qquad f \in C(X_{\L_2},\Z).
\end{equation}
By \cite[Proposition 3.3]{MaDynam2020},
there exists 
an isomorphism $\Phi: \mathcal{O}_{\L_1}\longrightarrow \mathcal{O}_{\L_2}$
of $C^*$-algebras such that 
$\Phi(\mathcal{D}_{\L_1}) = \mathcal{D}_{\L_2}$
with $\Phi(g) = g\circ h_{\L}^{-1}$ for $g \in \D_{\L_1}$
and
\begin{equation*}
\Phi \circ \rho^{ {\L_1},f_{G_\L} \circ \varphi}_t 
= \rho^{{\L_2}, f_{G_\L}}_t\circ \Phi,
\qquad f \in C(X_{\L_2},\Z), \, t \in \T.
\end{equation*}
By \eqref{eq:fglvarphi}, 
we obtain the equality \eqref{eq:3.20}.
\end{proof}
In \cite{MaDynam2020}, the notion of $(\L_1,\L_2)$-continuously orbit equivalence
between one-sided subshifts
$(X_{\Lambda_1},\sigma_{\Lambda_1})$ 
and 
$(X_{\Lambda_2},\sigma_{\Lambda_2})$ 
was introduced. 
The following notion of $(\L_1,\L_2)$-properly continuously orbit equivalence  
is stronger than it.
\begin{definition}[{cf. \cite[Definition 4.1]{MaDynam2020}, \cite[Section 6]{MaYMJ2010}}]
Let $\L_1, \L_2$ be left-resolving $\lambda$-graph systems
and
$\Lambda_1, \Lambda_2$ be their presenting subshifts, respectively.
The one-sided subshifts 
$(X_{\Lambda_1},\sigma_{\Lambda_1})$ 
and 
$(X_{\Lambda_2},\sigma_{\Lambda_2})$ 
are said to be 
$(\L_1,\L_2)$-{\it properly continuously orbit equivalent}\/
if 
there exist homeomorphisms
$h_{\L}: X_{{\frak L}_1}\longrightarrow X_{{\frak L}_2}$
and 
$h_{\Lambda}: X_{\Lambda_1}\longrightarrow X_{\Lambda_2}$
and
continuous maps
$k_{\Lambda_i}, l_{\Lambda_i}: X_{\Lambda_i}\longrightarrow \Z, i=1,2$
such that 
\begin{equation} \label{eq:pih}
\pi_{\L_2}\circ h_\L = h_{\Lambda}\circ \pi_{\L_1}
\end{equation}
and
\begin{align}
\sigma_{\L_2}^{k_{\Lambda_1}(\pi_{\L_1}(x))}(h_\L(\sigma_{\L_1}(x)))
 =\sigma_{\L_2}^{l_{\Lambda_1}(\pi_{\L_1}(x))}(h_\L(x)), 
\qquad x \in X_{\L_1}, \label{eq:pcoe1}\\
\sigma_{\L_1}^{k_{\Lambda_2}(\pi_{\L_2}(y))}(h_\L^{-1}(\sigma_{\L_2}(y)))
 =\sigma_{\L_1}^{l_{\Lambda_2}(\pi_{\L_2}(y))}(h_\L^{-1}(y)), 
\qquad y \in X_{\L_2},\label{eq:pcoe2}
\end{align}
where $\pi_{\L_i}:X_{\L_i}\longrightarrow X_{\Lambda_i}, i=1,2$ 
denotes
the surjection defined by \eqref{eq:pilambda}.
\end{definition}
By \eqref{eq:pih} and \eqref{eq:pcoe1},\eqref{eq:pcoe2},
the following lemma is straightforward. 
\begin{lemma}\label{lem:3.7}
Let $\L_1, \L_2$ be left-resolving $\lambda$-graph systems
and $\Lambda_1, \Lambda_2$ be their presenting subshifts, respectively.
Suppose that the one-sided subshifts 
$(X_{\Lambda_1},\sigma_{\Lambda_1})$ 
and 
$(X_{\Lambda_2},\sigma_{\Lambda_2})$ 
are $(\L_1,\L_2)$-properly continuously orbit equivalent.
\begin{enumerate}
\renewcommand{\theenumi}{\roman{enumi}}
\renewcommand{\labelenumi}{\textup{(\theenumi)}}
\item
The following equalities hold:
\begin{align*}
\sigma_{\Lambda_2}^{k_{\Lambda_1}(a)}(h_\Lambda(\sigma_{\Lambda_1}(a)))
 =\sigma_{\Lambda_2}^{l_{\Lambda_1}(a)}(h_\Lambda(a)), \qquad a \in X_{\Lambda_1}, \\
\sigma_{\Lambda_1}^{k_{\Lambda_2}(b)}(h_\Lambda^{-1}(\sigma_{\Lambda_2}(b)))
 =\sigma_{\Lambda_1}^{l_{\Lambda_2}(b)}(h_\Lambda^{-1}(b)), \qquad b \in X_{\Lambda_2}.
\end{align*}
\item
For $f \in C(X_{\Lambda_2},\Z)$, put 
\begin{equation} \label{eq:3.18}
\Psi_{h_\Lambda}(f)(a) 
= \sum_{i=0}^{l_{\Lambda_1}(a)}f(\sigma_{\Lambda_2}^i(h_\Lambda(a)))
- \sum_{j=0}^{k_{\Lambda_1}(a)}f(\sigma_{\Lambda_2}^j(h_\Lambda(\sigma_{\Lambda_1}(a)))),
\qquad a \in X_{\Lambda_1}.
\end{equation}
Then we have
\begin{equation}\label{eq:lem3.7ii}
\Psi_{h_\L}(f\circ\pi_{\L_2}) = \Psi_{h_\Lambda}(f)\circ\pi_{\L_1}, \qquad f \in C(X_{\Lambda_2},\Z).
\end{equation}
\end{enumerate}
\end{lemma}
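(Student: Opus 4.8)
The plan is to obtain both assertions by pushing the $\lambda$-graph-system-level data in \eqref{eq:pcoe1} and \eqref{eq:pcoe2} forward through the surjections $\pi_{\L_i}: X_{\L_i}\longrightarrow X_{\Lambda_i}$. The two facts I would use repeatedly are the intertwining identity $\pi_{\L_i}\circ\sigma_{\L_i} = \sigma_{\Lambda_i}\circ\pi_{\L_i}$, which iterates to $\pi_{\L_i}\circ\sigma_{\L_i}^n = \sigma_{\Lambda_i}^n\circ\pi_{\L_i}$ for every $n\in\Zp$, and the compatibility \eqref{eq:pih}, that is $\pi_{\L_2}\circ h_\L = h_\Lambda\circ\pi_{\L_1}$, together with its rearrangement $\pi_{\L_1}\circ h_\L^{-1} = h_\Lambda^{-1}\circ\pi_{\L_2}$. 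Surjectivity of each $\pi_{\L_i}$ will then upgrade an identity holding on the range of $\pi_{\L_1}$ (resp. $\pi_{\L_2}$) to one holding on all of $X_{\Lambda_1}$ (resp. $X_{\Lambda_2}$).

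For (i), I would apply $\pi_{\L_2}$ to both sides of \eqref{eq:pcoe1}. Commuting $\pi_{\L_2}$ through the powers of $\sigma_{\L_2}$ by the iterated intertwining identity turns the equation into $\sigma_{\Lambda_2}^{k_{\Lambda_1}(\pi_{\L_1}(x))}(\pi_{\L_2}(h_\L(\sigma_{\L_1}(x)))) = \sigma_{\Lambda_2}^{l_{\Lambda_1}(\pi_{\L_1}(x))}(\pi_{\L_2}(h_\L(x)))$. Using \eqref{eq:pih} and $\pi_{\L_1}\circ\sigma_{\L_1} = \sigma_{\Lambda_1}\circ\pi_{\L_1}$, I would rewrite $\pi_{\L_2}(h_\L(\sigma_{\L_1}(x))) = h_\Lambda(\sigma_{\Lambda_1}(\pi_{\L_1}(x)))$ and $\pi_{\L_2}(h_\L(x)) = h_\Lambda(\pi_{\L_1}(x))$. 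Writing $a = \pi_{\L_1}(x)$ then gives the first identity of (i) for every $a$ in the range of $\pi_{\L_1}$, hence for all $a\in X_{\Lambda_1}$ by surjectivity. The second identity is obtained symmetrically: apply $\pi_{\L_1}$ to \eqref{eq:pcoe2}, use the rearranged form of \eqref{eq:pih}, and set $b = \pi_{\L_2}(y)$.

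For (ii), the key observation is that in the present setting the continuous maps occurring in the definition of $\Psi_{h_\L}$ are exactly $k_{\Lambda_1}\circ\pi_{\L_1}$ and $l_{\Lambda_1}\circ\pi_{\L_1}$, as one sees by comparing \eqref{eq:pcoe1} with the relation defining $\Psi_{h_\L}$. I would substitute $g = f\circ\pi_{\L_2}$ into that definition and rewrite each summand using the two intertwining identities and \eqref{eq:pih}, turning $f(\pi_{\L_2}(\sigma_{\L_2}^i(h_\L(x))))$ into $f(\sigma_{\Lambda_2}^i(h_\Lambda(\pi_{\L_1}(x))))$ and $f(\pi_{\L_2}(\sigma_{\L_2}^j(h_\L(\sigma_{\L_1}(x)))))$ into $f(\sigma_{\Lambda_2}^j(h_\Lambda(\sigma_{\Lambda_1}(\pi_{\L_1}(x)))))$. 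Term by term this matches the definition \eqref{eq:3.18} of $\Psi_{h_\Lambda}(f)$ at $a = \pi_{\L_1}(x)$, giving $\Psi_{h_\L}(f\circ\pi_{\L_2})(x) = \Psi_{h_\Lambda}(f)(\pi_{\L_1}(x))$ for all $x$, which is \eqref{eq:lem3.7ii}.

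I do not expect a genuinely hard step: the content is bookkeeping and, as the text indicates, ``straightforward''. The one place I would be careful is in (ii), where the upper limits of the two sums in $\Psi_{h_\L}(f\circ\pi_{\L_2})(x)$ depend on $x$ only through $\pi_{\L_1}(x)$ --- this holds precisely because $k_{\Lambda_1}$ and $l_{\Lambda_1}$ are functions on $X_{\Lambda_1}$ --- so that the term-by-term comparison with $\Psi_{h_\Lambda}(f)$ is legitimate and the identity descends cleanly along $\pi_{\L_1}$.
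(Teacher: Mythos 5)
Your proposal is correct and is exactly the verification the paper has in mind: the paper offers no written proof, stating only that the lemma ``is straightforward'' from \eqref{eq:pih}, \eqref{eq:pcoe1} and \eqref{eq:pcoe2}, and your argument (push those relations through the surjections $\pi_{\L_i}$ using the intertwining identities, then use surjectivity, with the correct identification $k_1=k_{\Lambda_1}\circ\pi_{\L_1}$, $l_1=l_{\Lambda_1}\circ\pi_{\L_1}$ in part (ii)) is precisely that routine check, carried out correctly.
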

Let $\L$ be a left-resolving $\lambda$-graph system 
and $\Lambda$ be its presenting subshift.
For $f \in C(X_\Lambda,\Z),$ 
we have $f \circ \pi_\L \in C(X_{\L},\Z)$
so that the automorphisms
$\rho^{\L,f\circ\pi_\L}_t, t \in \T$ of $\OL$
are defined.
We denote them by $\rho^{\Lambda,f}_t, t \in \T$.
By Proposition \ref{prop:3.8} and
Lemma \ref{lem:3.7},
we may see the following lemma.
\begin{lemma}\label{lemma:3.8}
Let $\L_1$ and $ \L_2$ be left-resolving $\lambda$-graph systems.
If the presenting one-sided subshifts 
$(X_{\Lambda_1},\sigma_{\Lambda_1})$ 
and 
$(X_{\Lambda_2},\sigma_{\Lambda_2})$ 
are 
$(\L_1,\L_2)$-properly continuously orbit equivalent
via a homeomorphism $h_\Lambda: X_{\Lambda_1}\longrightarrow X_{\Lambda_2}$,
then there exists an isomorphism
$\Phi:\mathcal{O}_{\L_1}\longrightarrow \mathcal{O}_{\L_2}$
of $C^*$-algebras such that  
$\Phi(\mathcal{D}_{\Lambda_1}) = \mathcal{D}_{\Lambda_2}$
with $\Phi(g) = g \circ h_\Lambda^{-1}$ for $g \in \D_{\Lambda_1}$
and
\begin{equation}\label{eq:propcoe}
\Phi \circ \rho^{ {\Lambda_1},{\Psi_{h_\Lambda}(f)}}_t 
= \rho^{{\Lambda_2}, f}_t\circ \Phi,
\qquad f \in C(X_{\Lambda_2},\Z), \, t \in \T.
\end{equation}
\end{lemma}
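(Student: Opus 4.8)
The plan is to lift the problem to the cover spaces $X_{\L_1}, X_{\L_2}$, apply Proposition \ref{prop:3.8} there, and then descend to the subshift level using the compatibility \eqref{eq:pih} together with Lemma \ref{lem:3.7}. The point is that ``proper'' continuous orbit equivalence provides, on top of the data needed at the cover level, exactly the extra relation \eqref{eq:pih} that lets the isomorphism respect the smaller diagonals $\D_{\Lambda_i}$.

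First I would produce the cover-level data required by Proposition \ref{prop:3.8}. Setting $k_i := k_{\Lambda_i}\circ\pi_{\L_i}$ and $l_i := l_{\Lambda_i}\circ\pi_{\L_i}$, which are continuous $\Z$-valued maps on $X_{\L_i}$, the defining equations \eqref{eq:pcoe1} and \eqref{eq:pcoe2} of $(\L_1,\L_2)$-proper continuous orbit equivalence become exactly the hypotheses of Proposition \ref{prop:3.8}. Thus Proposition \ref{prop:3.8} furnishes an isomorphism $\Phi:\mathcal{O}_{\L_1}\longrightarrow\mathcal{O}_{\L_2}$ with $\Phi(\D_{\L_1})=\D_{\L_2}$, satisfying $\Phi(g)=g\circ h_\L^{-1}$ for $g\in\D_{\L_1}\cong C(X_{\L_1})$, together with the intertwining relation
$$
\Phi\circ\rho^{\L_1,\Psi_{h_\L}(F)}_t=\rho^{\L_2,F}_t\circ\Phi,
\qquad F\in C(X_{\L_2},\Z),\ t\in\T.
$$

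Next I would check that $\Phi$ carries the smaller diagonal $\D_{\Lambda_1}$ onto $\D_{\Lambda_2}$. Under the identifications $\D_{\Lambda_i}\cong C(X_{\Lambda_i})$, the inclusion $\D_{\Lambda_i}\subset\D_{\L_i}$ is the pullback $g\mapsto g\circ\pi_{\L_i}$. For $g\in C(X_{\Lambda_1})$ one computes $\Phi(g\circ\pi_{\L_1})=g\circ(\pi_{\L_1}\circ h_\L^{-1})$, and rewriting \eqref{eq:pih} as $\pi_{\L_1}\circ h_\L^{-1}=h_\Lambda^{-1}\circ\pi_{\L_2}$ gives $\Phi(g\circ\pi_{\L_1})=(g\circ h_\Lambda^{-1})\circ\pi_{\L_2}\in\D_{\Lambda_2}$. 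Hence $\Phi(\D_{\Lambda_1})\subseteq\D_{\Lambda_2}$ and $\Phi$ acts there as $g\mapsto g\circ h_\Lambda^{-1}$; applying the same argument to $\Phi^{-1}$, which sends $g$ to $g\circ h_\L$ and for which \eqref{eq:pih} reads $\pi_{\L_2}\circ h_\L=h_\Lambda\circ\pi_{\L_1}$, yields the reverse inclusion, so $\Phi(\D_{\Lambda_1})=\D_{\Lambda_2}$.

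Finally I would specialize the cover-level intertwining relation to potentials pulled back from the subshift. Taking $F=f\circ\pi_{\L_2}$ for $f\in C(X_{\Lambda_2},\Z)$, one has $\rho^{\L_2,f\circ\pi_{\L_2}}_t=\rho^{\Lambda_2,f}_t$ by the definition $\rho^{\Lambda,\cdot}_t=\rho^{\L,(\cdot)\circ\pi_\L}_t$, while Lemma \ref{lem:3.7}(ii) gives $\Psi_{h_\L}(f\circ\pi_{\L_2})=\Psi_{h_\Lambda}(f)\circ\pi_{\L_1}$, whence $\rho^{\L_1,\Psi_{h_\L}(f\circ\pi_{\L_2})}_t=\rho^{\Lambda_1,\Psi_{h_\Lambda}(f)}_t$. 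Substituting these into the displayed relation produces exactly \eqref{eq:propcoe}. The only step carrying real content is the descent $\Phi(\D_{\Lambda_1})=\D_{\Lambda_2}$, where the compatibility \eqref{eq:pih} between $h_\L$ and $h_\Lambda$ is indispensable; everything else is a matter of matching hypotheses and unwinding the definition of $\rho^{\Lambda,\cdot}_t$.
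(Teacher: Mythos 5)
Your proposal is correct and follows essentially the same route as the paper: apply Proposition \ref{prop:3.8} at the cover level (with $k_i=k_{\Lambda_i}\circ\pi_{\L_i}$, $l_i=l_{\Lambda_i}\circ\pi_{\L_i}$), use \eqref{eq:pih} to see that $\Phi$ restricts to the smaller diagonals, and use Lemma \ref{lem:3.7}(ii) to convert the cover-level intertwining into \eqref{eq:propcoe}. You merely spell out the diagonal-descent computation more explicitly than the paper does.
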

\begin{proof}
Assume that the presenting one-sided subshifts 
$(X_{\Lambda_1},\sigma_{\Lambda_1})$ 
and 
$(X_{\Lambda_2},\sigma_{\Lambda_2})$ 
are $(\L_1,\L_2)$-properly continuously orbit equivalent
via a homeomorphism $h_{\Lambda}:X_{\Lambda_1}\longrightarrow X_{\Lambda_2}$.
Take a homeomorphism 
$h_{\L}: X_{{\frak L}_1}\longrightarrow X_{{\frak L}_2}$
satisfying
\eqref{eq:pih}
and
\eqref{eq:pcoe1}, \eqref{eq:pcoe2}.
As the embedding 
$\mathcal{D}_{\Lambda_i}\hookrightarrow \mathcal{D}_{\L_i}
$
is induced by the factor mp
$\pi_{\L_i}:X_{\L_i}\longrightarrow X_{\Lambda_i}, i=1,2$,
the isomorphism
$\Phi:\mathcal{O}_{\L_1}\longrightarrow \mathcal{O}_{\L_2}$
of $C^*$-algebras in Proposition \ref{prop:3.8} holding
$\Phi(g) = g\circ h_\L^{-1}, g \in \mathcal{D}_{\L_1}$
satisfies
$\Phi(\mathcal{D}_{\Lambda_1}) = \mathcal{D}_{\Lambda_2}$
because of \eqref{eq:pih}. 
By the equality \eqref{eq:3.20} together with \eqref{eq:lem3.7ii}, 
we obtain the equality \eqref{eq:propcoe}.
\end{proof}
Now let $\Lambda$ be a normal subshift. 
Take the minimal $\lambda$-graph system 
$\LLmin$ of $\Lambda$ and consider the $C^*$-algebra
$\mathcal{O}_{\LLmin}$ that is written
$\OLmin$.
For  $f \in C(X_\Lambda,\Z)$,
the automorphisms
$\rho^{\Lambda,f}_t, t \in \T$
act on $\OLmin$
and yield an action written 
$\rho^{\Lambda,f}
$ of $\T$.
It is called the gauge action with potential $f$. 
We identify the commutative $C^*$-subalgebras 
$\DL, \DLam$ with $C(X_\L), C(X_\Lambda)$
in a natural way, respectively.
We reach the following theorem
that proves Theorem \ref{thm:main} (i) $\Longrightarrow$ (ii).
\begin{theorem}\label{thm:main1}
Let $\Lambda_1$ and $\Lambda_2$ be normal subshifts.
If the one-sided subshifts 
$(X_{\Lambda_1},\sigma_{\Lambda_1})$ 
and 
$(X_{\Lambda_2},\sigma_{\Lambda_2})$ 
are topologically  conjugate,
then there exists an isomorphism
$\Phi:\OLamonemin\longrightarrow\OLamtwomin$ of $C^*$-algebras such that 
$\Phi({\mathcal{D}}_{\Lambda_1}) ={\mathcal{D}}_{\Lambda_2}$
and
\begin{equation}
\Phi\circ\rho^{\Lambda_1, f\circ h}_t = \rho^{\Lambda_2,f}_t\circ\Phi\qquad
\text{ for all }\quad f \in C(X_{\Lambda_2},\Z),\quad  t \in \T,
\end{equation}
where
$h:X_{\Lambda_1}\longrightarrow X_{\Lambda_2}$
is the homeomorphism satisfying
$\Phi(g) = g \circ h^{-1}, g \in C(X_{\Lambda_1})$
under the natural identification between $\D_{\Lambda_i}$ and $C(X_{\Lambda_i}), i=1,2.$
\end{theorem}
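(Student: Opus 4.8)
The plan is to deduce Theorem~\ref{thm:main1} from Lemma~\ref{lemma:3.8} by exhibiting a one-sided topological conjugacy of normal subshifts as the special instance of proper continuous orbit equivalence of the minimal presentations whose cocycles are constant, namely $k_{\Lambda_i}\equiv 0$ and $l_{\Lambda_i}\equiv 1$, and then by identifying the induced potential transform $\Psi_{h_\Lambda}$. So suppose $h:X_{\Lambda_1}\longrightarrow X_{\Lambda_2}$ is a homeomorphism with $\sigma_{\Lambda_2}\circ h = h\circ\sigma_{\Lambda_1}$; its inverse then also commutes with the shifts.

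First I would lift $h$ to the level of the minimal $\lambda$-graph systems $\L_i = {\frak L}_{\Lambda_i}^{\min}$. The essential point is that the vertex sets $V_l^{\min}$, the labeled edges, and the map $\iota^{\min}$ defining $\L_i$ are built intrinsically from $\Lambda_i$ out of its $l$-synchronizing words $S_l(\Lambda_i)$ and the $l$-past equivalence given by the predecessor sets $\Gamma_l^-(\cdot)$. Since $h$ and $h^{-1}$ are sliding block codes, they put the admissible words, the synchronizing words, and the predecessor sets of $\Lambda_1$ and $\Lambda_2$ into a structure-preserving correspondence, compatible with $l$-past equivalence and with $\iota^{\min}$ level by level. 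Consequently the projective limits $\Omega_{\L_i}$ and the one-sided path spaces $X_{\L_i}$ are carried onto one another, producing a homeomorphism $h_\L: X_{\L_1}\longrightarrow X_{\L_2}$ whose label component is $h\circ\pi_{\L_1}$, so that $\pi_{\L_2}\circ h_\L = h\circ\pi_{\L_1}$ (this is \eqref{eq:pih}) and $h_\L\circ\sigma_{\L_1}=\sigma_{\L_2}\circ h_\L$. I expect this lift to be the \emph{main obstacle}: one must verify that the word correspondence induced by a block code is genuinely compatible with the $l$-past equivalence relation and the shift operation $\iota^{\min}$, so that the vertex-sequence datum of a path in $X_{\L_1}$ transports canonically to a path in $X_{\L_2}$; equivalently, that the minimal $\lambda$-graph system is a one-sided conjugacy invariant of the subshift.

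Granting the lift, equations \eqref{eq:pcoe1} and \eqref{eq:pcoe2} hold with the constant cocycles $k_{\Lambda_1}\equiv 0,\ l_{\Lambda_1}\equiv 1$ and $k_{\Lambda_2}\equiv 0,\ l_{\Lambda_2}\equiv 1$, because $h_\L\circ\sigma_{\L_1}=\sigma_{\L_2}\circ h_\L$ reads exactly $\sigma_{\L_2}^{0}(h_\L(\sigma_{\L_1}(x)))=\sigma_{\L_2}^{1}(h_\L(x))$, and symmetrically for $h_\L^{-1}$. Thus $(X_{\Lambda_1},\sigma_{\Lambda_1})$ and $(X_{\Lambda_2},\sigma_{\Lambda_2})$ are $(\L_1,\L_2)$-properly continuously orbit equivalent via $h$, and Lemma~\ref{lemma:3.8} yields an isomorphism $\Phi:\OLamonemin\longrightarrow\OLamtwomin$ with $\Phi(\D_{\Lambda_1})=\D_{\Lambda_2}$, $\Phi(g)=g\circ h^{-1}$ for $g\in\D_{\Lambda_1}$, and $\Phi\circ\rho^{\Lambda_1,\Psi_{h}(f)}_t=\rho^{\Lambda_2,f}_t\circ\Phi$ for all $f\in C(X_{\Lambda_2},\Z)$, where $\Psi_h=\Psi_{h_\Lambda}$ is the transform of \eqref{eq:3.18}.

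Finally I would evaluate $\Psi_{h}(f)$ at these cocycles. Substituting $l_{\Lambda_1}\equiv 1$ and $k_{\Lambda_1}\equiv 0$ into \eqref{eq:3.18} gives
\begin{equation*}
\Psi_{h}(f)(a)=f(h(a))+f(\sigma_{\Lambda_2}(h(a)))-f(h(\sigma_{\Lambda_1}(a))),\qquad a\in X_{\Lambda_1},
\end{equation*}
and the conjugacy relation $h(\sigma_{\Lambda_1}(a))=\sigma_{\Lambda_2}(h(a))$ cancels the last two terms, so $\Psi_{h}(f)=f\circ h$. Feeding this into the intertwining relation above turns it into $\Phi\circ\rho^{\Lambda_1,f\circ h}_t=\rho^{\Lambda_2,f}_t\circ\Phi$ for all $f\in C(X_{\Lambda_2},\Z)$ and $t\in\T$, which is exactly the assertion of Theorem~\ref{thm:main1}.
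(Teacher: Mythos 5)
Your proposal follows essentially the same route as the paper: lift the conjugacy $h$ to a conjugacy $h_\L$ of the path spaces of the minimal $\lambda$-graph systems with $\pi_{\L_2}\circ h_\L=h\circ\pi_{\L_1}$, observe that this is a proper continuous orbit equivalence with cocycles $k_{\Lambda_1}\equiv 0$, $l_{\Lambda_1}\equiv 1$, apply Lemma~\ref{lemma:3.8}, and compute $\Psi_h(f)=f\circ h$. The lifting step you correctly flag as the main obstacle is exactly what the paper delegates to \cite[Proposition 7.5]{MaPre2020a} (the statement that a one-sided conjugacy of normal subshifts is an $(\L_{\Lambda_1}^{\min},\L_{\Lambda_2}^{\min})$-conjugacy), so your argument is correct and matches the paper's.
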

\begin{proof}
Assume that there exists a homeomorphism
$h:X_{\Lambda_1}\longrightarrow X_{\Lambda_2}$
such that $h\circ \sigma_{\Lambda_1} = \sigma_{\Lambda_2}\circ h$.
By \cite[Proposition 7.5]{MaPre2020a}, the one-sided subshifts
$(X_{\Lambda_1},\sigma_{\Lambda_1})$ 
and 
$(X_{\Lambda_2},\sigma_{\Lambda_2})$ 
are $(\L_{\Lambda_1}^\min,\L_{\Lambda_2}^\min)$-conjugate.
Hence by \cite[Definition 7.1]{MaPre2020a},
there exists a topological conjugacy
$h_\L: X_{\L_{\Lambda_1}^\min}\longrightarrow X_{\L_{\Lambda_2}^\min}$
such that 
$\pi_{\L_{\Lambda_2}^\min}\circ h_\L = h\circ \pi_{\L_{\Lambda_1}^\min}$,
so that it
yields an $(\L_{\Lambda_1}^\min,\L_{\Lambda_2}^\min)$-properly 
continuous orbit equivalence between
$(X_{\Lambda_1},\sigma_{\Lambda_1})$ 
and 
$(X_{\Lambda_2},\sigma_{\Lambda_2}).$ 
Lemma \ref{lemma:3.8} 
tells us 
that 
there exists an isomorphism
$\Phi:\mathcal{O}_{\L_{\Lambda_1}^\min}\longrightarrow \mathcal{O}_{\L_{\Lambda_2}^\min}$
of $C^*$-algebras
satisfying   
$\Phi(\mathcal{D}_{\Lambda_1}) = \mathcal{D}_{\Lambda_2}$
with $\Phi(g) = g \circ h^{-1}$ for $g \in \D_{\Lambda_1}$
and
\eqref{eq:propcoe}.
Now 
$h:X_{\Lambda_1}\longrightarrow X_{\Lambda_2}$
is a topological conjugacy, so that 
the equality
\begin{equation}
\Psi_{h}(f)  = f \circ h \qquad \text{ for all } f \in C(X_{\Lambda_2},\Z)
\label{eq:Psihlambda3.18}
\end{equation}
holds.
We thus have the desired assertion by the equality \eqref{eq:propcoe}
together with \eqref{eq:Psihlambda3.18}.
\end{proof}

\section{Proof of Theorem \ref{thm:main} (ii) $\Longrightarrow$ (i)} 
In this section, 
we will give a proof of the implication of Theorem \ref{thm:main} (ii) $\Longrightarrow$ (i).
Let $\Lambda$ be a normal subshift.
Let us denote by $S_1,\dots, S_N$ the canonical generating partial isometries of the $C^*$-algebra
$\OLmin$.
Recall that the $C^*$-subalgebras 
$\DLam$ and  $\DLmin$ of $\OLmin$ are defined in \eqref{eq:DL} and \eqref{eq:DLam}, respectively such that 
$\DLam \subset \DLmin$.
\begin{lemma}\label{lem:alpha}
Let $\alpha$ be an automorphism of 
$\OLmin$ such that its restriction to the subalgebra $\DLam$ 
is the identity. 
Then we have
\begin{enumerate}
\renewcommand{\theenumi}{\roman{enumi}}
\renewcommand{\labelenumi}{\textup{(\theenumi)}}
\item
$\alpha(S_\mu^*)S_\nu \in \DLmin$ for all $\mu, \nu \in B_*(\Lambda)$ with $|\mu| = |\nu|$.
\item
Put $\lambda_{\frak L} (X) = \sum_{j=1}^N S_j^* X S_j, X \in \DLmin$.
Then we have $\alpha\circ\lambda_{\frak L}= \lambda_{\frak L}\circ\alpha.$ 
\item The restriction of $\alpha$ to the subalgebra $\DLmin$
 is the identity.
\end{enumerate}
\end{lemma}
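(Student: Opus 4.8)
The plan is to prove the three assertions in order, since each builds on the previous one. The key structural fact I will exploit is Proposition \ref{prop:2.4}: the algebra $\OLmin$ satisfies $\DLam' \cap \OLmin = \DLmin$, so the relative commutant of $\DLam$ is exactly $\DLmin$. This is what turns a statement about $\alpha$ fixing $\DLam$ pointwise into control over how $\alpha$ moves elements into $\DLmin$.

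For assertion (i), I would proceed as follows. First I would show that $S_\mu^* S_\nu \in \DLmin$ whenever $|\mu| = |\nu|$: indeed, for any word $\xi$ with $|\xi| = |\mu| = |\nu|$, the element $S_\mu^* S_\nu$ commutes with the projection $S_\xi S_\xi^*$ (by the left-resolving/disjointness structure of the relations in Proposition \ref{prop:2.4}, products $S_\mu^* S_\nu$ are orthogonal or coincide according to the initial segments), and more generally $S_\mu^* S_\nu$ commutes with all of $\DLam = C^*(S_\eta S_\eta^* : \eta \in B_*(\Lambda))$; hence $S_\mu^* S_\nu \in \DLam' \cap \OLmin = \DLmin$. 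The point is then that $\alpha$ fixes $\DLam$, so for any $d \in \DLam$,
\begin{equation*}
\alpha(S_\mu^*) S_\nu \, d = \alpha(S_\mu^*)\, d \, S_\nu = \alpha(S_\mu^* \alpha^{-1}(d)) S_\nu = \alpha(S_\mu^* d) S_\nu,
\end{equation*}
and one checks symmetrically that $d\, \alpha(S_\mu^*) S_\nu = \alpha(S_\mu^*) S_\nu\, d$, using that $d$ commutes with $S_\nu$ up to the action already understood on $\DLam$; more cleanly, writing $S_\mu^* S_\nu \in \DLmin$ and applying $\alpha$, one gets $\alpha(S_\mu^* S_\nu) = \alpha(S_\mu^*) S_\nu \cdot (\text{correction})$, so I would instead directly verify that $\alpha(S_\mu^*) S_\nu$ commutes with every generator $S_\eta S_\eta^*$ of $\DLam$ and invoke the relative commutant identity to land in $\DLmin$.

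For assertion (ii), I would compute $\alpha(\lambda_{\frak L}(X)) = \alpha\bigl(\sum_j S_j^* X S_j\bigr) = \sum_j \alpha(S_j^*)\, \alpha(X)\, \alpha(S_j)$ for $X \in \DLmin$, and compare it with $\lambda_{\frak L}(\alpha(X)) = \sum_j S_j^* \alpha(X) S_j$. The discrepancy is controlled by part (i): the elements $\alpha(S_j^*) S_k$ lie in $\DLmin$, and by inserting the resolution of the identity $\sum_k S_k S_k^* = 1$ one rewrites $\alpha(S_j^*)\alpha(X)\alpha(S_j)$ in terms of $S_k^* \alpha(X) S_k$ weighted by elements $\alpha(S_j^*)S_k$ and $S_k^*\alpha(S_j)$ of $\DLmin$, which commute with the $\DLmin$-valued piece $\alpha(X)$; summing over $j$ and using $\sum_j \alpha(S_j)\alpha(S_j^*) = 1$ collapses the cross terms. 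This is where I expect the main obstacle: the bookkeeping in reindexing the double sum and verifying that the correction factors $\alpha(S_j^*)S_k$ recombine to give exactly $\lambda_{\frak L}\circ\alpha$ rather than some twisted version, relying crucially on $\alpha$ being a $*$-homomorphism fixing $\DLam$.

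Finally, assertion (iii) follows by an approximation/density argument. The subalgebra $\DLmin$ is the closure of the span of elements of the form $S_\mu E_i^l S_\mu^*$; since each $E_i^l$ is a polynomial in the $S_\nu^* S_\nu$ by \eqref{eq:Eil}, every element of $\DLmin$ is a norm limit of finite sums of products $S_\mu S_\mu^*$-type elements together with elements of the form $\lambda_{\frak L}^n(d)$ for $d \in \DLam$. Using part (ii) repeatedly gives $\alpha(\lambda_{\frak L}^n(d)) = \lambda_{\frak L}^n(\alpha(d)) = \lambda_{\frak L}^n(d)$ since $\alpha$ fixes $\DLam$; combined with part (i), which shows $\alpha$ fixes the transition data $S_\mu^* S_\nu$, one concludes $\alpha$ fixes a generating set of $\DLmin$ and hence, by continuity, all of $\DLmin$. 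I would make precise the claim that $\DLmin$ is generated as a $C^*$-algebra by $\DLam$ together with the images under the iterated maps $\lambda_{\frak L}$, so that the invariance established in (i) and (ii) propagates to the whole of $\DLmin$.
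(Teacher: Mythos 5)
Parts (i) and (ii) of your plan are essentially workable and close to the paper's argument. For (i), your final formulation --- verify that $\alpha(S_\mu^*)S_\nu$ commutes with every generator $S_\eta S_\eta^*$ of $\DLam$ and invoke $\DLam^\prime\cap\OLmin=\DLmin$ --- is exactly what the paper does (the computation goes by inserting $S_\nu S_\eta S_\eta^* S_\nu^*\in\DLam$ and pulling it through $\alpha$; note that your intermediate identity $S_\nu d=dS_\nu$ for $d\in\DLam$ is false, but you discard that route yourself). For (ii), your resolution-of-identity bookkeeping does close up: the cross terms $S_k^*\alpha(X)S_{k'}$ vanish for $k\ne k'$, and the diagonal correction $\sum_j\alpha(S_j^*)S_kS_k^*\alpha(S_j)=\alpha(S_k^*S_k)$ equals $S_k^*S_k$ because $d=\alpha(S_k^*)S_k$ lies in the commutative algebra $\DLmin$, whence $dd^*=d^*d$; you also need to record that $\alpha(\DLmin)=\DLmin$, which follows from $\alpha(\DLam)=\DLam$ and the relative commutant identity. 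The paper instead checks the intertwining on the generators $S_\mu E_i^lS_\mu^*$ directly, but the two computations are equivalent in substance.

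The genuine gap is in (iii). First, part (i) does not show that $\alpha$ ``fixes the transition data $S_\mu^*S_\nu$''; it only shows $\alpha(S_\mu^*)S_\nu\in\DLmin$. The identity $\alpha(S_\mu^*S_\mu)=S_\mu^*S_\mu$ is true, but it is a consequence of (ii) via $S_\mu^*S_\mu=\lambda_{\frak L}^{|\mu|}(S_\mu S_\mu^*)$, not of (i). Second, and more seriously, $\DLmin$ is \emph{not} generated by $\DLam$ together with $\bigcup_n\lambda_{\frak L}^n(\DLam)$: one computes $\lambda_{\frak L}(S_\mu S_\mu^*)=S_{\mu_1}^*S_{\mu_1}S_{\bar{\mu}}S_{\bar{\mu}}^*$, so iterating $\lambda_{\frak L}$ on $\DLam$ never leaves $C^*(\DLam\cup\{S_\eta^*S_\eta \mid \eta\in B_*(\Lambda)\})$, which consists of functions on $X_{\frak L}$ depending only on the label sequence and the vertex at time zero. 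Since the $\lambda$-graph system is left-resolving rather than right-resolving, the later vertices of a point of $X_{\frak L}$ are not determined by that data, so the generators $S_\mu E_i^lS_\mu^*$ with $|\mu|\ge 1$ lie outside this subalgebra in general. Consequently, even after you know $\alpha(E_i^l)=E_i^l$ and $\alpha|_{\DLam}=\id$, you still owe an argument that $\alpha(S_\mu E_i^lS_\mu^*)=S_\mu E_i^lS_\mu^*$; this does not follow from fixing the factors separately, because $\alpha(S_\mu E_i^lS_\mu^*)=\alpha(S_\mu)E_i^l\alpha(S_\mu^*)$. The missing step is the one the paper supplies: write $\alpha(S_\mu E_i^lS_\mu^*)=S_\mu S_\mu^*\alpha(S_\mu E_i^lS_\mu^*)=S_\mu\,\bigl(S_\mu^*\alpha(S_\mu)\bigr)\,E_i^l\,\alpha(S_\mu^*)$, use that $S_\mu^*\alpha(S_\mu)\in\DLmin$ commutes with $E_i^l$ by part (i), and then collapse $\alpha(S_\mu)\alpha(S_\mu^*)=\alpha(S_\mu S_\mu^*)=S_\mu S_\mu^*$. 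Without this step your argument for (iii) does not reach all of $\DLmin$.
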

\begin{proof}
(i) Let $\mu, \nu \in B_*(\Lambda)$ satisfy
$|\mu| = |\nu|$.
If $\mu \ne \nu$, we see that 
$$
\alpha(S_\mu^*)S_\nu 
= \alpha(S_\mu^*)S_\nu S_\nu^*S_\nu 
= \alpha(S_\mu^*S_\nu S_\nu^*) S_\nu 
=0.
$$
Suppose that 
$\mu = \nu$.
For $\xi \in B_*(\Lambda)$, we have
\begin{equation*}
\alpha(S_\mu^*)S_\mu S_\xi S_\xi^*
=  \alpha(S_\mu^*) \alpha(S_{\mu\xi} S_{\mu\xi}^*) S_\mu 
=  \alpha(S_\mu^* S_\mu S_\xi S_\xi^*S_\mu^*) S_\mu 
=   S_\xi S_\xi^* \alpha(S_\mu^*) S_\mu
\end{equation*}
so that $\alpha(S_\mu^*)S_\mu$ commutes with $ S_\xi S_\xi^*$ for all $\xi \in B_*(\Lambda).$
Hence $\alpha(S_\mu^*)S_\mu$ belongs to 
$\DLam^\prime \cap \OLmin$ that is 
$\DLmin$ by Proposition \ref{prop:2.4}.

(ii) 
We first note that 
the condition $\alpha(\DLam) = \DLam$ implies 
$\alpha(\DLam)^\prime \cap \OLmin =\DLam^\prime \cap \OLmin$
so that 
$\alpha(\DLmin) =\DLmin$
by Proposition \ref{prop:2.4}.
For $\mu = (\mu_1,\mu_2,\dots,\mu_m)$, put
$\bar{\mu} = (\mu_2,\dots,\mu_m)$ so that $\mu = \mu_1\bar{\mu}$.
We then have  for $i=1,2,\dots,m(l)$,
\begin{equation*}
\alpha\circ\lambda_{\frak L}( S_\mu E_i^l S_\mu^*)
= 
\alpha(S_{\mu_1}^*S_{\mu_1} S_{\bar{\mu}} E_i^l S_{\bar{\mu}}^*S_{\mu_1}^*S_{\mu_1} )
= 
\alpha(S_{\bar{\mu}} E_i^l S_{\bar{\mu}}^*)\alpha(S_{\mu_1}^*S_{\mu_1} ).
\end{equation*}
On the other hand, we have by (i)
\begin{align*}
\lambda_{\frak L}\circ\alpha( S_\mu E_i^l S_\mu^*)
= &\sum_{j=1}^N S_j^* \alpha( S_{\mu_1}) \alpha( S_{\bar{\mu}} E_i^l S_{\bar{\mu}}^*) \alpha(S_{\mu_1}^*)S_j \\
= & S_{\mu_1}^* \alpha( S_{\mu_1}) \alpha( S_{\bar{\mu}} E_i^l S_{\bar{\mu}}^*) \alpha(S_{\mu_1}^*)S_{\mu_1} \\
= &  \alpha( S_{\bar{\mu}} E_i^l S_{\bar{\mu}}^*) \alpha(S_{\mu_1}^*)S_{\mu_1}S_{\mu_1}^* \alpha( S_{\mu_1}) \\
= &  \alpha( S_{\bar{\mu}} E_i^l S_{\bar{\mu}}^*) \alpha(S_{\mu_1}^*S_{\mu_1}) 
\end{align*}
so that we have 
$\alpha\circ\lambda_{\frak L}( S_\mu E_i^l S_\mu^*)
=
\lambda_{\frak L}\circ\alpha( S_\mu E_i^l S_\mu^*) 
$
and hence
$\alpha\circ\lambda_{\frak L} =\lambda_{\frak L}\circ\alpha.$

(iii)
For $\mu \in B_k(\Lambda)$, we have
$S_\mu^* S_\mu = \lambda_{\frak L}^k(S_\mu S_\mu^*)$
so that 
\begin{equation}
\alpha(S_\mu^* S_\mu) = \alpha(\lambda_{\frak L}^k(S_\mu S_\mu^*))
=\lambda_{\frak L}^k( \alpha(S_\mu S_\mu^*))
=\lambda_{\frak L}^k(S_\mu S_\mu^*)=S_\mu^* S_\mu. \label{eq:alphaSmu}
\end{equation}
Since 
$E_i^l$ is written by some products among $S_\mu^*S_\mu, 1-S_\mu^*S_\mu$
for $\mu \in B_l(\Lambda)$ as in  \eqref{eq:Eil},
we know
$\alpha(E_i^l) = E_i^l.$
For $\mu \in \Gamma_l^-(v_i^l)$, 
as $S_\mu^*\alpha(S_\mu)$ commutes with 
$E_i^l$, we have
\begin{equation*}
\alpha(S_\mu E_i^l S_\mu^*)
 = S_\mu S_\mu^*\alpha(S_\mu E_i^l S_\mu^*) 
 = S_\mu\cdot S_\mu^*\alpha(S_\mu) \cdot  E_i^l \alpha(S_\mu^*) 
 = S_\mu  E_i^l S_\mu^*\alpha(S_\mu) \alpha(S_\mu^*) 
 = S_\mu  E_i^l S_\mu^*
\end{equation*}
so that we see
$\alpha(S_\mu E_i^l S_\mu^*) = S_\mu E_i^l S_\mu^*.$
\end{proof}
We thus have the following lemma.
\begin{lemma}\label{lem:ualpha}
Let $\alpha$ be an automorphism of 
$\OLmin$ such that its restriction to the subalgebra $\DLam$ 
is the identity. 
Then there exists a unitary $U_\alpha \in \DLmin$
such that 
\begin{equation}
\alpha(S_i) = U_\alpha S_i, \qquad i=1,2,\dots,N.
\end{equation}
\end{lemma}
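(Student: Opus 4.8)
The plan is to exhibit the unitary explicitly and verify the three required properties (unitarity, membership in $\DLmin$, and the intertwining relation) directly from the generators. The natural candidate is
\[
U_\alpha := \sum_{j=1}^N \alpha(S_j) S_j^*.
\]
Throughout I would use the elementary relations in $\OLmin$ coming from Proposition \ref{prop:2.4}: since the range projections $S_\beta S_\beta^*$ are projections summing to $1$, they are mutually orthogonal, whence $S_i^* S_j = \delta_{i,j} S_i^* S_i$ and $S_i S_i^* S_i = S_i$. I would also repeatedly invoke the identity $\alpha(S_\mu^* S_\mu) = S_\mu^* S_\mu$ of \eqref{eq:alphaSmu} together with the standing hypothesis that $\alpha$ fixes $\DLam$ pointwise.

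First I would check that $U_\alpha$ is unitary. Expanding $U_\alpha^* U_\alpha = \sum_{i,j} S_i \alpha(S_i^* S_j) S_j^*$ and using $S_i^* S_j = \delta_{i,j} S_i^* S_i$ together with $\alpha(S_i^* S_i) = S_i^* S_i$ collapses the sum to $\sum_i S_i S_i^* S_i S_i^* = \sum_i S_i S_i^* = 1$. Symmetrically, $U_\alpha U_\alpha^* = \sum_i \alpha(S_i) S_i^* S_i \alpha(S_i)^* = \sum_i \alpha(S_i S_i^* S_i S_i^*) = \alpha(\sum_i S_i S_i^*) = \alpha(1) = 1$, where orthogonality of the $S_i S_i^*$ kills the off-diagonal terms and \eqref{eq:alphaSmu} is used once more. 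The intertwining relation is then almost immediate: $U_\alpha S_i = \sum_j \alpha(S_j) S_j^* S_i = \alpha(S_i) S_i^* S_i$, and since $\alpha(S_i) S_i^* S_i = \alpha(S_i)\alpha(S_i^* S_i) = \alpha(S_i S_i^* S_i) = \alpha(S_i)$, we obtain $\alpha(S_i) = U_\alpha S_i$ for every $i$.

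The step I expect to be the crux is showing $U_\alpha \in \DLmin$. By Proposition \ref{prop:2.4} it suffices to prove that $U_\alpha$ commutes with every generator $S_\mu S_\mu^*$ of $\DLam$, so that $U_\alpha \in \DLam^\prime \cap \OLmin = \DLmin$. Here one must resist the temptation to write $\alpha(S_\mu) = U_\alpha S_\mu$: this fails for $|\mu| \ge 2$, since each letter of $\mu$ contributes its own factor of $U_\alpha$. The way around is to use only the single-letter relation $\alpha(S_i) = U_\alpha S_i$ together with the fact that $\alpha$ fixes $\DLam$. Writing $\mu = \mu_1\bar{\mu}$ and using $S_{\bar{\mu}} S_{\bar{\mu}}^* \in \DLam$, I would compute
\[
\alpha(S_\mu S_\mu^*) = \alpha(S_{\mu_1}) (S_{\bar{\mu}} S_{\bar{\mu}}^*) \alpha(S_{\mu_1}^*) = U_\alpha S_{\mu_1} (S_{\bar{\mu}} S_{\bar{\mu}}^*) S_{\mu_1}^* U_\alpha^* = U_\alpha (S_\mu S_\mu^*) U_\alpha^*,
\]
the middle equality being exactly $\alpha(S_{\mu_1}) = U_\alpha S_{\mu_1}$ and its adjoint. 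On the other hand $S_\mu S_\mu^* \in \DLam$ forces $\alpha(S_\mu S_\mu^*) = S_\mu S_\mu^*$, so $U_\alpha (S_\mu S_\mu^*) U_\alpha^* = S_\mu S_\mu^*$, i.e. $U_\alpha$ commutes with $S_\mu S_\mu^*$. Since $\mu \in B_*(\Lambda)$ is arbitrary and these projections generate $\DLam$, the unitary $U_\alpha$ lies in $\DLam^\prime \cap \OLmin = \DLmin$, which completes the argument.
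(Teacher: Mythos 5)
Your proposal is correct and follows essentially the same route as the paper: the same unitary $U_\alpha = \sum_{j=1}^N \alpha(S_j)S_j^*$, the same verification of unitarity and of $\alpha(S_i)=U_\alpha S_i$ via $\alpha(S_i^*S_i)=S_i^*S_i$, and the same criterion $U_\alpha\in\DLam^\prime\cap\OLmin=\DLmin$ checked by showing $U_\alpha S_\mu S_\mu^* U_\alpha^* = \alpha(S_\mu S_\mu^*) = S_\mu S_\mu^*$. The only (cosmetic) difference is that you derive this last identity from the already-established relation $\alpha(S_{\mu_1})=U_\alpha S_{\mu_1}$ rather than by expanding the double sum directly, which is the same computation read in the opposite direction.
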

\begin{proof}
We set $U_\alpha = \sum_{j=1}^N \alpha(S_j) S_j^*.$
By Lemma \ref{lem:alpha} (iii) together with \eqref{eq:alphaSmu}, 
we know $\alpha(S_i^* S_i) = S_i^* S_i$.
It then follows that 
\begin{equation*}
U_\alpha S_i =\sum_{j=1}^N \alpha(S_j) S_j^*S_i = \alpha(S_i) S_i^*S_i=\alpha(S_i).
\end{equation*}
We also have 
\begin{equation*}
U_\alpha U_\alpha^*
=\sum_{i=1}^N \alpha(S_i) S_i^*S_i\alpha(S_i^*) 
=\sum_{i=1}^N \alpha(S_iS_i^*) =1 
\end{equation*}
and
\begin{equation*}
 U_\alpha^* U_\alpha
=\sum_{i=1}^N S_i \alpha(S_i^*S_i) S_i^* 
=\sum_{i=1}^N S_i S_i^* =1. 
\end{equation*}
We will next show that $U_\alpha$ belongs to
$\DLmin.$
For $\mu =(\mu_1,\dots,\mu_m) \in B_m(\Lambda)$ and
$\bar{\mu} =(\mu_2,\dots,\mu_m) \in B_{m-1}(\Lambda)$,
we have
\begin{align*}
U_\alpha S_\mu S_\mu^* U_\alpha
= & \sum_{j,k=1}^N \alpha(S_j) 
S_j^* S_{\mu_1}S_{\bar{\mu}}S_{\bar{\mu}}^*S_{\mu_1}^*S_k \alpha(S_k^*)\\
= &  \alpha(S_{\mu_1}) 
S_{\mu_1}^* S_{\mu_1}S_{\bar{\mu}}S_{\bar{\mu}}^*S_{\mu_1}^*S_{\mu_1} \alpha(S_{\mu_1}^*)\\
= &  \alpha(S_{\mu_1}S_{\mu_1}^* S_{\mu_1}) \alpha(S_{\bar{\mu}}S_{\bar{\mu}}^*)
\alpha(S_{\mu_1}^*S_{\mu_1}) \alpha(S_{\mu_1}^*)\\
= &  \alpha(S_{\mu}S_{\mu}^*) =S_\mu S_\mu^* \\
\end{align*}
so that $U_\alpha$ commutes with $ S_\mu S_\mu^*, \mu \in B_*(\Lambda)$.
This shows that 
$U_\alpha$ belongs to
$\DLmin$ by Proposition \ref{prop:2.4}.
\end{proof}
We will prove the following theorem 
that is the converse of Theorem \ref{thm:main1}.
\begin{theorem}\label{thn:main2}
Let $\Lambda_1, \Lambda_2$ be normal subshifts.
If there exists an isomorphism 
$\Phi:\OLamonemin\longrightarrow\OLamtwomin$ of $C^*$-algebras such that 
$\Phi({\mathcal{D}}_{\Lambda_1}) ={\mathcal{D}}_{\Lambda_2}$
and
\begin{equation} \label{eq:gauge}
\Phi\circ\rho^{\Lambda_1, f\circ h_\Lambda}_t = \rho^{\Lambda_2,f}_t\circ\Phi\qquad
\text{ for all }\quad f \in C(X_{\Lambda_2},\Z),\quad  t \in \T,
\end{equation}
where $h_\Lambda: X_{\Lambda_1}\longrightarrow X_{\Lambda_2}$
is a homeomorphism satisfying 
$\Phi(g) = g\circ h_\Lambda^{-1}$ for $g \in \D_{\Lambda_1},$   
then  $h_\Lambda: X_{\Lambda_1}\longrightarrow X_{\Lambda_2}$
gives rise to a topological conjugacy between 
$(X_{\Lambda_1},\sigma_{\Lambda_1})$ 
and 
$(X_{\Lambda_2},\sigma_{\Lambda_2}).$
\end{theorem}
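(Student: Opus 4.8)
The plan is to promote the eventual conjugacy already hidden in the hypothesis into a genuine topological conjugacy, using the entire family of potentials to annihilate the eventual-conjugacy lag. First I would specialize \eqref{eq:gauge} to $f\equiv 1$. Since $1\circ h_\Lambda\equiv 1$ and $\rho^{\Lambda_i,1}$ is the ordinary gauge action, the hypothesis gives $\Phi\circ\rho^{\Lambda_1}_t=\rho^{\Lambda_2}_t\circ\Phi$, so by Proposition \ref{prop:MaPre2020a} the one-sided subshifts are eventually conjugate and the realizing homeomorphism is exactly $h_\Lambda$; thus there is $K\in\Zp$ with $\sigma_{\Lambda_2}^{K}(h_\Lambda(\sigma_{\Lambda_1}(a)))=\sigma_{\Lambda_2}^{K+1}(h_\Lambda(a))$ and the symmetric relation for $h_\Lambda^{-1}$. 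Lifting $h_\Lambda$ to the $\lambda$-graph path spaces (as behind Proposition \ref{prop:MaPre2020a}), this exhibits $h_\Lambda$ as an $(\L_{\Lambda_1}^{\min},\L_{\Lambda_2}^{\min})$-properly continuously orbit equivalence with the constant cocycles $k_{\Lambda_1}\equiv K$, $l_{\Lambda_1}\equiv K+1$.

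Next I would manufacture a comparison isomorphism. Applying Lemma \ref{lemma:3.8} to this proper continuous orbit equivalence produces $\Phi_0\colon\OLamonemin\to\OLamtwomin$ with $\Phi_0(g)=g\circ h_\Lambda^{-1}$ on $\mathcal D_{\Lambda_1}$ and $\Phi_0\circ\rho^{\Lambda_1,\Psi_{h_\Lambda}(f)}_t=\rho^{\Lambda_2,f}_t\circ\Phi_0$, where $\Psi_{h_\Lambda}$ is the cocycle transform \eqref{eq:3.18} built from $k_{\Lambda_1},l_{\Lambda_1}$. Then $\alpha:=\Phi_0^{-1}\circ\Phi$ is an automorphism of $\OLamonemin$ whose restriction to $\mathcal D_{\Lambda_1}$ is the identity, since $\Phi$ and $\Phi_0$ induce the same $h_\Lambda$. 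Lemma \ref{lem:ualpha} then supplies a unitary $U_\alpha\in\mathcal D_{\L_{\Lambda_1}^{\min}}$ with $\alpha(S_i)=U_\alpha S_i$, and Lemma \ref{lem:alpha}(iii) gives $\alpha|_{\mathcal D_{\L_{\Lambda_1}^{\min}}}=\id$.

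The core of the argument is to feed the potentials through $\alpha$. Inverting the intertwining of $\Phi_0$ and combining with \eqref{eq:gauge} yields $\alpha\circ\rho^{\Lambda_1,f\circ h_\Lambda}_t=\rho^{\Lambda_1,\Psi_{h_\Lambda}(f)}_t\circ\alpha$ for all $f\in C(X_{\Lambda_2},\Z)$, and I would evaluate both sides on $S_i$. By the very definition of the potential gauge action one has $\rho^{\Lambda_1,g}_t(S_i)=e^{2\pi\sqrt{-1}\,g\,t}S_i$ with $e^{2\pi\sqrt{-1}gt}\in\mathcal D_{\Lambda_1}$; using that $\alpha$ fixes $\mathcal D_{\Lambda_1}$ and $\mathcal D_{\L_{\Lambda_1}^{\min}}\ni U_\alpha$ and that the gauge actions fix $\mathcal D_{\L_{\Lambda_1}^{\min}}$, the left side becomes $e^{2\pi\sqrt{-1}(f\circ h_\Lambda)t}U_\alpha S_i$ and the right side $U_\alpha e^{2\pi\sqrt{-1}\Psi_{h_\Lambda}(f)t}S_i$. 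All factors lie in the commutative algebra $\mathcal D_{\L_{\Lambda_1}^{\min}}$, so $U_\alpha$ cancels; multiplying on the right by $S_i^*$ and summing against $\sum_iS_iS_i^*=1$ gives $e^{2\pi\sqrt{-1}(f\circ h_\Lambda)t}=e^{2\pi\sqrt{-1}\Psi_{h_\Lambda}(f)t}$ for all $t\in\T$, hence $\Psi_{h_\Lambda}(f)=f\circ h_\Lambda$ for every $f\in C(X_{\Lambda_2},\Z)$.

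Finally I would unwind $\Psi_{h_\Lambda}(f)=f\circ h_\Lambda$. Writing out \eqref{eq:3.18} with $k_{\Lambda_1}\equiv K$, $l_{\Lambda_1}\equiv K+1$ yields, for each $a$, the equality of integer combinations of point evaluations $\sum_{i=1}^{K+1}\delta_{\sigma_{\Lambda_2}^i h_\Lambda(a)}=\sum_{j=0}^{K}\delta_{\sigma_{\Lambda_2}^j h_\Lambda(\sigma_{\Lambda_1}(a))}$. Since point evaluations are linearly independent, whenever $h_\Lambda(a)$ is aperiodic these two orbit-intervals coincide as sets; their top points already agree by the eventual-conjugacy relation, and descending forces $h_\Lambda(\sigma_{\Lambda_1}(a))=\sigma_{\Lambda_2}(h_\Lambda(a))$. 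As aperiodic points are dense in the infinite irreducible subshift $X_{\Lambda_2}$ and $h_\Lambda$ is a homeomorphism, the continuous maps $h_\Lambda\circ\sigma_{\Lambda_1}$ and $\sigma_{\Lambda_2}\circ h_\Lambda$ agree on a dense set, hence everywhere, so $h_\Lambda$ is a topological conjugacy. I expect the principal obstacle to be the second step: rigorously producing $\Phi_0$ inducing exactly $h_\Lambda$ and intertwining the $\Psi_{h_\Lambda}$-twisted gauge actions, that is, lifting the eventual conjugacy to a properly continuous orbit equivalence of the minimal presentations; the density-of-aperiodic-points step closing the final paragraph is the secondary delicate point.
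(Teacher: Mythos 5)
Your proposal is correct and follows essentially the same route as the paper: specialize to $f\equiv 1$ to extract an $(\L_{\Lambda_1}^{\min},\L_{\Lambda_2}^{\min})$-eventual conjugacy (hence a properly continuous orbit equivalence), build the comparison isomorphism from Lemma \ref{lemma:3.8}, use the unitary from Lemma \ref{lem:ualpha} to transfer the twisted gauge intertwining onto $\Phi$ and deduce $\Psi_{h_\Lambda}(f)=f\circ h_\Lambda$, then unwind this identity on non-eventually-periodic points and conclude by density. The only deviations are cosmetic: you work with $\alpha=\Phi_0^{-1}\circ\Phi$ directly rather than first re-establishing the intertwining for $\Phi$ itself, and you exploit the constancy of the cocycles $K,K+1$ in the final step where the paper's Lemma 4.4 treats general cocycles.
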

\begin{proof}
Assume that 
 there exists an isomorphism 
$\Phi:\OLamonemin\longrightarrow\OLamtwomin$ of $C^*$-algebras satisfying  
$\Phi({\mathcal{D}}_{\Lambda_1}) ={\mathcal{D}}_{\Lambda_2}$
and the equality \eqref{eq:gauge}.
In particular for the constant function $f\equiv 1$ on $X_{\Lambda_2},$
the equality \eqref{eq:gauge} goes to 
\begin{equation}
\Phi\circ\rho^{\Lambda_1}_t = \rho^{\Lambda_2}_t\circ\Phi\qquad
\text{ for all }\quad  t \in \T.
\end{equation}
By \cite[Proposition 8.9]{MaPre2020a}, we know that 
$(X_{\Lambda_1},\sigma_{\Lambda_1})$ 
and 
$(X_{\Lambda_2},\sigma_{\Lambda_2})$
are  $({\frak L}_1^{\min},{\frak L}_2^{\min})$-eventually conjugate.
Hence it is  
$({\frak L}_1^{\min},{\frak L}_2^{\min})$-properly continuously orbit equivalent
via a homeomorphism $h_\Lambda:X_{\Lambda_1}\longrightarrow X_{\Lambda_2}$,
so that by Lemma \ref{lemma:3.8},
there exists an isomorphism 
$\Phi_1:\OLamonemin\longrightarrow\OLamtwomin$ 
of $C^*$-algebras such that 
$\Phi_1({\mathcal{D}}_{\Lambda_1}) ={\mathcal{D}}_{\Lambda_2}$
with
$\Phi_1(a) = a\circ h_\Lambda^{-1}, a \in \D_{\Lambda_1}$
and
\begin{equation} \label{eq:phi1coegauge}
\Phi_1\circ\rho^{\Lambda_1, \Psi_{h_\Lambda}(f)}_t = \rho^{\Lambda_2,f}_t\circ\Phi_1\qquad
\text{ for all }\quad f \in C(X_{\Lambda_2},\Z),\quad  t \in \T.
\end{equation}
Since the original isomorphism
 $\Phi:\OLamonemin\longrightarrow\OLamtwomin$ 
 of $C^*$-algebras satisfying  
$\Phi({\mathcal{D}}_{\Lambda_1}) ={\mathcal{D}}_{\Lambda_2}$
and
$\Phi(a) = a\circ h_\Lambda^{-1}$ for $a \in \D_{\Lambda_1}$,
we know that 
$\Phi^{-1}\circ \Phi_1$ is the identity on ${\mathcal{D}}_{\Lambda_1}$.
This means that 
$\Phi^{-1}\circ \Phi_1$ is an automorphism of
$\OLamonemin$ such that its restriction to the subalgebra 
${\mathcal{D}}_{\Lambda_1}$ is the identity.
Let us denote by $S_1,\dots, S_N$ the canonical generating partial isometries
of the $C^*$-algebra $\OLamonemin$.
By Lemma \ref{lem:ualpha}, there exists a unitary 
$U \in \mathcal{D}_{{\frak L}_{\Lambda_1}^{\min}} 
$ such that 
$\Phi^{-1}\circ \Phi_1(S_i) = U S_i, i=1,2,\dots, N.$
By putting $U_1 = \Phi(U)$, we have 
$$
\Phi_1(S_i) = U_1 \Phi(S_i), \qquad i=1,2,\dots,N.
$$
By \eqref{eq:phi1coegauge} and the equality
\begin{equation}
\rho^{\Lambda_1, \Psi_{h_\Lambda}(f)}_t(S_i) 
= \exp{(2 \pi \sqrt{-1}\Psi_{h_\Lambda}(f) t)} S_i, \qquad i=1,2,\dots,N,
\end{equation}
we have 
\begin{equation*}
\Phi_1(\exp{(2 \pi \sqrt{-1}\Psi_{h_\Lambda}(f) t)} S_i)
=\rho^{\Lambda_2, f}_t(U_1 \Phi(S_i)),  \qquad i=1,2,\dots,N
\end{equation*} 
and hence 
\begin{equation*}
\Phi_1(\exp{(2 \pi \sqrt{-1}\Psi_{h_\Lambda}(f) t)}) U_1\Phi( S_i)
=U_1 \rho^{\Lambda_2, f}_t( \Phi(S_i)),  \qquad i=1,2,\dots,N.
\end{equation*}
As the restriction of
$\Phi^{-1}\circ \Phi_1$ to $\mathcal{D}_{\Lambda_1}$
is the identity, we have 
\begin{equation*}
\Phi_1(\exp{(2 \pi \sqrt{-1}\Psi_{h_\Lambda}(f) t)})
=\Phi(\exp{(2 \pi \sqrt{-1}\Psi_{h_\Lambda}(f) t)})
\end{equation*}
so that 
\begin{equation*}
\Phi(\exp{(2 \pi \sqrt{-1}\Psi_{h_\Lambda}(f) t)}) U_1\Phi( S_i)
=U_1 \rho^{\Lambda_2, f}_t( \Phi(S_i)),  \qquad i=1,2,\dots,N.
\end{equation*}
Therefore we have  
\begin{equation*} 
\Phi\circ\rho^{\Lambda_1, \Psi_{h_\Lambda}(f)}_t(S_i) 
= \rho^{\Lambda_2,f}_t\circ\Phi(S_i),
 \qquad
t \in \T
\end{equation*}
and hence 
\begin{equation} \label{eq:phicoegauge}
\Phi\circ\rho^{\Lambda_1, \Psi_{h_\Lambda}(f)}_t 
= \rho^{\Lambda_2,f}_t\circ\Phi, \qquad
t \in \T.
\end{equation}
Since
\begin{equation*} 
\Phi\circ\rho^{\Lambda_1, f\circ h}_t 
= \rho^{\Lambda_2,f}_t\circ\Phi, \qquad
t \in \T,
\end{equation*}
we obtain that 

\begin{equation}
\Psi_{h_\Lambda}(f) = f \circ h_\Lambda
\qquad
\text{ for all }\quad f \in C(X_{\Lambda_2},\Z).
\end{equation}
We will reach the desired assertion from the following lemma.
\end{proof}
\begin{lemma}
Suppose that  
$(X_{\Lambda_1},\sigma_{\Lambda_1})$ and
$(X_{\Lambda_2},\sigma_{\Lambda_2})$
are
$({\frak L}_1^{\min},{\frak L}_2^{\min})$-properly continuously orbit equivalent
such that 
homeomorphisms
$h_{\L}: X_{{\frak L}_1}\longrightarrow X_{{\frak L}_2}$
and 
$h_{\Lambda}: X_{\Lambda_1}\longrightarrow X_{\Lambda_2}$
satisfy the conditions 
\eqref{eq:pih} and \eqref{eq:pcoe1}, \eqref{eq:pcoe2}.
Let
$
\Psi_{h_\Lambda}:C(X_{\Lambda_2},\Z) \longrightarrow C(X_{\Lambda_1},\Z)
$
be the map defined by \eqref{eq:3.18}.
If $\Psi_{h_\Lambda}(f) = f\circ h_{\Lambda}$ 
for all $f \in C(X_{\Lambda_2},\Z)$,
then the homeomorphism
$h_\Lambda:X_{\Lambda_1}\longrightarrow X_{\Lambda_2}
$
gives rise to a topological conjugacy
between $(X_{\Lambda_1},\sigma_{\Lambda_1})$ and
$(X_{\Lambda_2},\sigma_{\Lambda_2})$.
\end{lemma}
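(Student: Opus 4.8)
The plan is to show directly that the given homeomorphism $h_\Lambda$ intertwines the two shifts, i.e. that $h_\Lambda\circ\sigma_{\Lambda_1}=\sigma_{\Lambda_2}\circ h_\Lambda$; since $h_\Lambda$ is already a homeomorphism, this makes it a topological conjugacy (the inverse then automatically intertwines as well). I would fix an arbitrary point $a\in X_{\Lambda_1}$ and abbreviate $b=h_\Lambda(a)$, $b'=h_\Lambda(\sigma_{\Lambda_1}(a))$, $k=k_{\Lambda_1}(a)$ and $l=l_{\Lambda_1}(a)$, so that the goal reduces to the pointwise identity $b'=\sigma_{\Lambda_2}(b)$.

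First I would unwind the hypothesis. Substituting the definition \eqref{eq:3.18} into the assumption $\Psi_{h_\Lambda}(f)=f\circ h_\Lambda$ and observing that the $i=0$ summand on the left is exactly $f(h_\Lambda(a))=(f\circ h_\Lambda)(a)$, the two copies of $f(h_\Lambda(a))$ cancel, leaving for every $f\in C(X_{\Lambda_2},\Z)$ the identity
\[
\sum_{i=1}^{l}f(\sigma_{\Lambda_2}^i(b)) = \sum_{j=0}^{k}f(\sigma_{\Lambda_2}^j(b')).
\]
The next step is to upgrade this family of scalar identities to an equality of counting measures on $X_{\Lambda_2}$. Since $X_{\Lambda_2}$ is zero-dimensional and only finitely many points appear in the two finite orbit segments, they can be separated by clopen sets whose indicator functions lie in $C(X_{\Lambda_2},\Z)$; hence
\[
\mu_A := \sum_{i=1}^{l}\delta_{\sigma_{\Lambda_2}^i(b)} = \sum_{j=0}^{k}\delta_{\sigma_{\Lambda_2}^j(b')} =: \mu_B .
\]
Comparing total masses forces $l=k+1$.

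The heart of the argument is a telescoping cancellation. Pushing the identity $\mu_A=\mu_B$ forward by $\sigma_{\Lambda_2}$ and subtracting it from the original, all interior point masses cancel, leaving the identity of integer point-mass combinations
\[
\delta_{\sigma_{\Lambda_2}(b)} - \delta_{\sigma_{\Lambda_2}^{l+1}(b)} = \delta_{b'} - \delta_{\sigma_{\Lambda_2}^{k+1}(b')}.
\]
Now the orbit relation of Lemma \ref{lem:3.7}(i), namely $\sigma_{\Lambda_2}^{k}(b')=\sigma_{\Lambda_2}^{l}(b)$, gives after one more application of $\sigma_{\Lambda_2}$ that $\sigma_{\Lambda_2}^{k+1}(b')=\sigma_{\Lambda_2}^{l+1}(b)$, so the two ``boundary'' terms cancel and I conclude $\delta_{\sigma_{\Lambda_2}(b)}=\delta_{b'}$, that is $b'=\sigma_{\Lambda_2}(b)$. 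As $a$ was arbitrary, this yields $h_\Lambda\circ\sigma_{\Lambda_1}=\sigma_{\Lambda_2}\circ h_\Lambda$, completing the proof.

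I expect the main obstacle to be the passage from the scalar identities to the measure equality in the second step: one must verify that $C(X_{\Lambda_2},\Z)$ genuinely separates the finitely many orbit points occurring (using zero-dimensionality), and one must keep the sign conventions straight so that the sums $\sum_{i=0}^{l}$ and $\sum_{j=0}^{k}$, and hence the consequence $l=k+1$, are interpreted consistently. Once the measure equality and the boundary relation $\sigma_{\Lambda_2}^{l}(b)=\sigma_{\Lambda_2}^{k}(b')$ are in hand, the telescoping cancellation that produces $b'=\sigma_{\Lambda_2}(b)$ is purely formal.
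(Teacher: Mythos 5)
Your proof is correct, and it takes a genuinely different --- and in fact shorter --- route than the paper's. Both arguments begin identically: cancel the $i=0$ term against $f(h_\Lambda(a))$, test the resulting identity against indicator functions of clopen sets separating the finitely many orbit points (zero-dimensionality of $X_{\Lambda_2}$ puts these in $C(X_{\Lambda_2},\Z)$), and conclude that the two finite orbit segments coincide. But where the paper records only the set equality $B=D$, extracts indices $p,q$ with $\sigma_{\Lambda_2}(h_\Lambda(a))=\sigma_{\Lambda_2}^p(h_\Lambda(\sigma_{\Lambda_1}(a)))$ and $h_\Lambda(\sigma_{\Lambda_1}(a))=\sigma_{\Lambda_2}^q(h_\Lambda(a))$, and then must dispose of the case $p\ge 1$ by showing $h_\Lambda(a)$ would be eventually periodic --- after which it invokes preservation of eventually periodic points under the orbit relation (citing the analogue of \cite[Proposition 3.5]{MMETDS}), density of non-eventually-periodic points in a normal subshift, and continuity --- your telescoping identity in the free abelian group $\Z[X_{\Lambda_2}]$ yields the conjugacy relation at every single point $a$ directly. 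Subtracting the pushforward under $\sigma_{\Lambda_2}$ leaves only the boundary terms $\delta_{\sigma_{\Lambda_2}(h_\Lambda(a))}-\delta_{\sigma_{\Lambda_2}^{l+1}(h_\Lambda(a))}$ and $\delta_{h_\Lambda(\sigma_{\Lambda_1}(a))}-\delta_{\sigma_{\Lambda_2}^{k+1}(h_\Lambda(\sigma_{\Lambda_1}(a)))}$, and the orbit relation of Lemma \ref{lem:3.7}(i), shifted once, identifies the two negative terms, so $\sigma_{\Lambda_2}(h_\Lambda(a))=h_\Lambda(\sigma_{\Lambda_1}(a))$ falls out formally --- with no case analysis and no appeal to density of aperiodic points, hence no use of normality at this step. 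The only delicate point, which you flag yourself, is that the equality of counting measures must hold with multiplicities, and the separating clopen functions do deliver exactly that; note also that your relation $l_{\Lambda_1}(a)=k_{\Lambda_1}(a)+1$ is the correct count (the paper's displayed $l_{\Lambda_1}(a)=k_{\Lambda_1}(a)-1$ is a sign slip).
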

\begin{proof}
We fix $a \in X_{\Lambda_1}$. 
By \eqref{eq:3.18} with the condition 
$\Psi_{h_\Lambda}(f)(a) = f(h_{\Lambda}(a))$
and the equality
$\sigma_{\Lambda_2}^{l_{\Lambda_1}(a)}(h_\Lambda(a))
=
\sigma_{\Lambda_2}^{k_{\Lambda_1}(a)}(h_\Lambda(\sigma_{\Lambda_1}(a))),
$
we have 
\begin{equation}\label{eq:star1}
\sum_{i=1}^{l_{\Lambda_1}(a)-1} f(\sigma_{\Lambda_2}^i(h_\Lambda(a)))
=
\sum_{j=0}^{k_{\Lambda_1}(a)-1} f(\sigma_{\Lambda_2}^j(h_\Lambda(\sigma_{\Lambda_1}(a)))),
\qquad f \in C(X_{\Lambda_2},\Z).
\end{equation}
Put
\begin{align*}
B  = & \{ \sigma_{\Lambda_2}^i(h_\Lambda(a)) \mid i=1, 2,\dots, l_{\Lambda_1}(a)-1\},\\
D = & \{ \sigma_{\Lambda_2}^j(h_\Lambda(\sigma_{\Lambda_1}(a))) \mid j=0,1, \dots, k_{\Lambda_1}(a)-1\}.
\end{align*}
By \eqref{eq:star1},we have
\begin{equation}\label{eq:star2}
\sum_{b \in B}f(b) = \sum_{d \in D}f(d)
\qquad \text{ for all } f \in C(X_{\Lambda_2},\Z).
\end{equation}
Suppose that $B\ne D$ as sets, 
and that there exists  an element $b_1 \in B$
such that $b_1 \not\in D$.
One may find a continuous function $f\in  C(X_{\Lambda_2},\Z)$ such that 
$$
f(b_1) =1, \qquad f(b) =0 \quad \text{ for all } b\in D\cup B\backslash\{b_1\}
$$
a contradiction to \eqref{eq:star2}.
Hence we see that $B \subset D$
and similarly $D \subset B$
so that $B = D$.
By \eqref{eq:star1}, we have
$l_{\Lambda_1}(a)= k_{\Lambda_1}(a)-1$
and there exist nonnegative integers
$p, q$ with $0\le p \le k_{\Lambda_1}(a)-1$
and $1\le q \le l_{\Lambda_1}(a)-1$
such that
\begin{equation}\label{eq:pq}
\sigma_{\Lambda_2}(h_\Lambda(a)) = \sigma_{\Lambda_2}^p(h_\Lambda(\sigma_{\Lambda_1}(a))), \qquad
h_\Lambda(\sigma_{\Lambda_1}(a)) = \sigma_{\Lambda_2}^q(h_\Lambda(a)).
\end{equation}
In case that $p=0$, we have 
$\sigma_{\Lambda_2}(h_\Lambda(a)) = h_\Lambda(\sigma_{\Lambda_1}(a)).$
Suppose that $p\ne 0$ so that $p\ge 1$.
By \eqref{eq:pq}, we have
\begin{equation}
\sigma_{\Lambda_2}(h_\Lambda(a)) = \sigma_{\Lambda_2}^p(h_\Lambda(\sigma_{\Lambda_1}(a)))
= \sigma_{\Lambda_2}^p(\sigma_{\Lambda_2}^q(h_\Lambda(a)))
= \sigma_{\Lambda_2}^{p+q}(h_\Lambda(a)).
\end{equation}
Since $p+q \ge 2$, the point $h_{\Lambda}(a)$ must be an eventually periodic point.
Hence we conclude that if a point $b =h_{\Lambda}(a)$ is not eventually periodic,
the equality $\sigma_{\Lambda_2}(h_\Lambda(a)) = h_\Lambda(\sigma_{\Lambda_1}(a))$
holds.
As in the proof of \cite[Proposition 3.5]{MMETDS},
the homeomorphism
$h_\Lambda: X_{\Lambda_1}\longrightarrow X_{\Lambda_2}$ 
satisfying the condition
$$
\sigma_{\Lambda_2}^{k_{\Lambda_1}(a)}(h_\Lambda(\sigma_{\Lambda_1}(a)))
=\sigma_{\Lambda_2}^{l_{\Lambda_1}(a)}(h_\Lambda(a)),
\qquad a \in X_{\Lambda_1} 
$$
preserves the set of eventually periodic points
in $X_{\Lambda_i}$.
Since the set of non-eventually periodic points is dense in a normal subshift,
 we conclude that the equality
$\sigma_{\Lambda_2}(h_\Lambda(a)) = h_\Lambda(\sigma_{\Lambda_1}(a))$
holds for all points $a \in X_{\Lambda_1}$, so that 
the homeomorphism
$h_\Lambda: X_{\Lambda_1}\longrightarrow X_{\Lambda_2}$ 
is a topological conjugacy.
\end{proof}

\section{Example: Irreducible sofic shifts}
Let $G=(V, E, \lambda)$ be a finite directed labeled graph over finite alphabet $\Sigma$,
where
$V$ is a finite vertex set, $E$ is a finite edge set and
$\lambda: E\longrightarrow \Sigma$ is a labeling map.
The shift space $\Lambda_G$ of a  sofic shift $(\Lambda_G,\sigma_G)$  for the directed labeled graph $G$ is 
defined by 
\begin{equation*}
\Lambda_G =\{
(\lambda(e_n))_{n\in \Z} \in \Sigma^\Z \mid t(e_n) = s(e_{n+1}),\, e_n, e_{n+1} \in E, \,  n \in \Z\},
\end{equation*}
where
$t(e_n)$ denotes the terminal vertex of $e_n$ and
$s(e_{n+1})$ denotes the source vertex of $e_{n+1}.$
The sofic shift $(\Lambda_G,\sigma_G)$ is written $\Lambda_G$ or $\Lambda$ without specifying $G$ for brevity.
The one-sided sofic shift for $\Lambda_G$ is denoted by $X_{\Lambda_G}$ or $X_\Lambda$.
If $\Lambda_G$ is not any finite set, it is said to be nontrivial.
The class of irreducible nontrivial sofic shifts contains
that of irreducible topological Markov shifts defined by 
irreducible non-permutation matrices with entreis in $\{0,1\}$. 

Let $\Lambda$ be an irreducible nontrivial sofic shift over alphabet $\Sigma$.
There are many finite labeled graph that present the sofic shift.
Among them, there is a left-resolving, predecessor-separated irreducible
finite labeled graph written
$G_\Lambda^F = (V_\Lambda^F, E_\Lambda^F, \lambda_\Lambda^F)$.
It is a unique labeled graph called the (left) Fischer cover graph (cf. \cite{LM}). 
Let us denote by 
$\{ v_1,\dots,v_N\}$
the vertex set
$V_\Lambda^F$.
For $i,j = 1,\dots, N$ and $\alpha \in \Sigma$, 
consider the matrix:
\begin{equation*} 
A(i,\alpha,j) 
= 
\begin{cases}
1 &  \text{ if } s(e) = v_i,  t(e) = v_j, \lambda_\Lambda^F(e) = \alpha
\text{ for some } e \in E_\Lambda^F,  \\
0 & \text{ otherwise. }
\end{cases}
\end{equation*}
Put
\begin{equation*}
\widehat{\Sigma} 
=\{(\alpha,i) \in \Sigma \times \{1,2,\dots,N\} \mid
 \lambda_\Lambda^F(e)= \alpha, t(e) = v_i \text{ for some } e \in E_\Lambda^F \}.
\end{equation*}
Define a matrix 
$\widehat{A}$ over $\widehat{\Sigma}$ 
by setting
\begin{equation}
\widehat{A}((\alpha,i), (\beta,j)) 
=
\sum_{k=1}^N  A(k,\alpha,i) A(i,\beta,j)
\quad
\text{ for } 
(\alpha,i), (\beta,j) \in \widehat{\Sigma}. \label{eq:5.1}
\end{equation}
As the labeled graph $G_\Lambda^F$ is left-resolving,
the $(\alpha,i), (\beta,j)$-entry 
$\widehat{A}((\alpha,i), (\beta,j))$
of the matrix $\widehat{A}$ is zero or one.
Since $\Lambda$ is irreducible, the matrix $\widehat{A}$ is an irreducible matrix.
Let us denote by $X_{\widehat{A}}$ the shift space of the topological Markov shift
$(X_{\widehat{A}}, \sigma_{\widehat{A}})$
defined by the matrix $\widehat{A}$.
There exists a  factor map
$\pi_\Lambda: X_{\widehat{A}} \longrightarrow X_\Lambda$
defined by
$\pi_\Lambda((\alpha_n,i_n)_{n \in \N}) =(\alpha_n)_{n\in \N}.$
It satisfies
$\pi_\Lambda\circ \sigma_{\widehat{A}} =\sigma_\Lambda\circ \pi_\Lambda$.  
Let $S_\alpha, \alpha \in \Sigma$ and
$E_i, i=1,2,\dots,N$ be partial isometries and projections
respectively 
satisfying the operator relations: 
\begin{equation}
\sum_{j=1}^{N} E_j = \sum_{\beta \in \Sigma}S_\beta S_\beta^* = 1, 
\qquad S_\alpha S_\alpha^* E_i = E_i S_\alpha S_\alpha^*, 
\qquad
S_\alpha^* E_i S_\alpha = \sum_{j=1}^{N} A(i,\alpha, j) E_j \label{eq:5.2}
\end{equation}
for $\alpha \in \Sigma,\ i=1, 2,\dots,N$.
Let us denote by ${\mathcal{O}}_{G^F_\Lambda}$
the universal $C^*$-algebra generated by 
$S_\alpha, \alpha \in \Sigma$ and
$E_i, i=1, 2, \dots,N$ satisfying the relations \eqref{eq:5.2}.
By the relations \eqref{eq:5.2},
we know that the $C^*$-algebra $\OLmin$  associated with  the sofic shift
$\Lambda$ is canonically isomorphic to the $C^*$-algebra 
${\mathcal{O}}_{G^F_\Lambda}$.
Define partial isometries
$$
S_{(\alpha,i)} = S_\alpha E_i \qquad \text{ for }\quad (\alpha,i) \in \widehat{\Sigma}.
$$
As in  \cite[Section 4]{MaPre2020a}, the $C^*$-algebra
$C^*(S_{(\alpha,i)}: (\alpha,i) \in \widehat{\Sigma})$ generated by 
$
S_{(\alpha,i)}, (\alpha,i) \in \widehat{\Sigma} 
$
is canonically isomorphic to 
the Cuntz-Krieger algebra 
${\mathcal{O}}_{\widehat{A}}$
 for the matrix $\widehat{A}$,
 that is simple and purely infinite. 
  Since
 \begin{equation*}
S_\alpha = \sum_{i=1}^N S_\alpha E_i = \sum_{i=1}^N S_{(\alpha,i)}, \qquad
E_i= \sum_{(\beta,j)\in \widehat{\Sigma}} A(i,\beta,j) S_{(\beta,j)} S_{(\beta,j)}^*,
\end{equation*}
we know that
$C^*(S_{(\alpha,i)}; (\alpha,i) \in \widehat{\Sigma})$
coincides with
${\mathcal{O}}_{G^F_\Lambda} $
and hence with $\OLmin$.
Define the commutative $C^*$-subalgebra $\mathcal{D}_{G_\Lambda^F}$ of 
$\mathcal{O}_{G_\Lambda^F}$ 
by setting
\begin{equation*}
\mathcal{D}_{G_\Lambda^F} = C^*(S_\mu E_i S_\mu^* \mid \mu \in B_*(\Lambda), i=1,2,\dots,N).
\end{equation*}
Through the identification between
$\OLmin$ and $\mathcal{O}_{G_\Lambda^F}$ ,
and between 
$\mathcal{O}_{G_\Lambda^F}$  and  ${\mathcal{O}}_{\widehat{A}}$,
we know that 
$\DLmin = \mathcal{D}_{G_\Lambda^F}$,
and
$\mathcal{D}_{G_\Lambda^F}= \mathcal{D}_{\widehat{A}}$,
where
$\mathcal{D}_{\widehat{A}} =C(X_{\widehat{A}})$
the canonical diagonal algebra of the Cuntz--Krieger algebra   
 $\mathcal{O}_{\widehat{A}}$.
Therefore these three pairs
\begin{equation*}
(\OLmin,\DLmin), \qquad
(\mathcal{O}_{G_\Lambda^F}, \mathcal{D}_{G_\Lambda^F}),\qquad
(\mathcal{O}_{\widehat{A}}, \mathcal{D}_{\widehat{A}})
\end{equation*}
are all identified with each other in natural way.
Under the identification between 
$\DLmin$ and $ \mathcal{D}_{G_\Lambda^F}$,
 the commutative $C^*$-subalgebra $\mathcal{D}_\Lambda =C(X_\Lambda)$
 of $\DLmin$  is regarded as a $C^*$-subalgebra of  
 $\mathcal{D}_{\widehat{A}}$ that comes from the factor map
 $\pi_\Lambda:X_{\widehat{A}} \longrightarrow X_\Lambda$ from the Fischer cover
 $X_{\widehat{A}}$ to $X_\Lambda$.
 Under the identification between 
 $\OLmin$ and $\mathcal{O}_{\widehat{A}}$,
 the gauge action $\rho^{\Lambda,f}$ on $\OLmin$ with potential $f \in C(X_\Lambda,\Z)$
 is identified with the gauge action
 $\rho^{\widehat{A},f}$ 
 on $\mathcal{O}_{\widehat{A}}$ with potential $f$. 
 Therefore we have the following proposition.
\begin{proposition}[{cf.  \cite[Proposition 4.2]{MaPre2020a}}]  \label{prop:5.1}
Let $\Lambda$ be a nontrivila  irreducible sofic shift.
Let $\widehat{A}$ be the transition matrix of the left Fischer cover 
graph of $\Lambda$ defined by \eqref{eq:5.1}.
Then there exists an isomorphism 
$\Phi_\Lambda:\OLmin\longrightarrow 
{\mathcal{O}}_{\widehat{A}}$
from the $C^*$-algebra 
$\OLmin$ associated with the normal subshift $\Lambda$
onto the simple purely infinite Cuntz--Krieger algebra
${\mathcal{O}}_{\widehat{A}}$
for the matrix $\widehat{A}$ such that 
\begin{gather*}
\Phi_\Lambda(\DLmin) = {\mathcal{D}}_{\widehat{A}} (= C(X_{\widehat{A}})), \qquad
\Phi_\Lambda(\mathcal{D}_\Lambda) = C(X_\Lambda),\\
\Phi_\Lambda\circ \rho^{\Lambda,f}_t = \rho^{\widehat{A},f}_t\circ \Phi_\Lambda 
\qquad \text{ for all } f \in C(X_\Lambda, \Z), \quad t \in \T.
\end{gather*}
\end{proposition}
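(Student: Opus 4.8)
The plan is to obtain $\Phi_\Lambda$ as the composite of the two canonical isomorphisms already exhibited in the discussion preceding the statement: the isomorphism $\OLmin \cong \mathcal{O}_{G_\Lambda^F}$ coming from the fact that the defining relations of $\OLmin$ reduce, for a sofic shift, to the relations \eqref{eq:5.2} of the Fischer cover, followed by the isomorphism $\mathcal{O}_{G_\Lambda^F} \cong {\mathcal{O}}_{\widehat{A}}$ determined by $S_{(\alpha,i)} = S_\alpha E_i$. Concretely $\Phi_\Lambda$ sends each generator $S_\alpha$ to $\sum_{i=1}^N S_{(\alpha,i)}$ and each $E_i$ to $\sum_{(\beta,j)\in\widehat{\Sigma}} A(i,\beta,j)\, S_{(\beta,j)}S_{(\beta,j)}^*$, exactly the two expressions displayed just above the statement, and its inverse is read off from the same formulas. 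That $\Phi_\Lambda$ is a well-defined $*$-isomorphism, and that $\Phi_\Lambda(\DLmin) = {\mathcal{D}}_{\widehat{A}}$ and $\Phi_\Lambda(\DLam) = C(X_\Lambda)$, are precisely the identifications of the three pairs $(\OLmin,\DLmin)$, $(\mathcal{O}_{G_\Lambda^F},\mathcal{D}_{G_\Lambda^F})$, $({\mathcal{O}}_{\widehat{A}},{\mathcal{D}}_{\widehat{A}})$ recorded above, together with the observation that $\DLam$ is carried onto the copy of $C(X_\Lambda)$ sitting inside ${\mathcal{D}}_{\widehat{A}} = C(X_{\widehat{A}})$ via the factor map $\pi_\Lambda$.

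Thus the only point requiring genuine verification is the intertwining of the gauge actions with potential. Since $\Phi_\Lambda$ is a $*$-isomorphism and $\OLmin$ is generated by $S_1,\dots,S_N$ (the projections $E_i^l$ being expressible through them as in \eqref{eq:Eil}), it suffices to check $\Phi_\Lambda \circ \rho^{\Lambda,f}_t = \rho^{\widehat{A},f}_t\circ\Phi_\Lambda$ on each generator $S_\alpha$. I would use the description of the gauge action with potential on generators: unwinding $\rho^{\Lambda,f}_t = \Ad(U_t(f\circ\pi_{\L^\min}))$ through the cocycle $(f\circ\pi_{\L^\min})_{G_{\L^\min}}$, which on a degree-one element $(x,1,\sigma_{\L^\min}(x))$ equals $f(\pi_{\L^\min}(x))$, one gets $\rho^{\Lambda,f}_t(S_\alpha) = \exp{(2\pi\sqrt{-1}(f\circ\pi_{\L^\min})t)}\,S_\alpha$, the exponential being a unitary lying in the copy $\DLam$ of $C(X_\Lambda)$; likewise $\rho^{\widehat{A},f}_t(S_{(\alpha,i)}) = \exp{(2\pi\sqrt{-1}(f\circ\pi_\Lambda)t)}\,S_{(\alpha,i)}$.

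The key compatibility is then that $\Phi_\Lambda$ carries the diagonal unitary $\exp{(2\pi\sqrt{-1}(f\circ\pi_{\L^\min})t)}$ to $\exp{(2\pi\sqrt{-1}(f\circ\pi_\Lambda)t)}$. This is exactly the assertion that, under the identification $\DLmin = {\mathcal{D}}_{\widehat{A}}$ of unit-space algebras (equivalently the homeomorphism $X_{\L^\min}\cong X_{\widehat{A}}$ it induces), the two factor maps $\pi_{\L^\min}$ and $\pi_\Lambda$ agree, so that the two pulled-back potentials coincide; this is what $\Phi_\Lambda(\DLam) = C(X_\Lambda)$ encodes. Granting it, the computation
\begin{equation*}
\Phi_\Lambda(\rho^{\Lambda,f}_t(S_\alpha))
= \exp{(2\pi\sqrt{-1}(f\circ\pi_\Lambda)t)}\sum_{i}S_{(\alpha,i)}
= \rho^{\widehat{A},f}_t\Big(\sum_i S_{(\alpha,i)}\Big)
= \rho^{\widehat{A},f}_t(\Phi_\Lambda(S_\alpha))
\end{equation*}
closes the argument, the diagonal unitary factoring through the sum because it is independent of $i$. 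The main obstacle, such as it is, is bookkeeping: making sure the potential is pulled back along the correct factor map on each side and that the exponential really sits in the diagonal, so that $\Phi_\Lambda$ acts on it by the already-established identification of diagonals rather than by the full isomorphism on off-diagonal elements.
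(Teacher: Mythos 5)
Your proposal is correct and follows essentially the same route as the paper: the paper "proves" Proposition \ref{prop:5.1} precisely by the chain of identifications $(\OLmin,\DLmin)\cong(\mathcal{O}_{G_\Lambda^F},\mathcal{D}_{G_\Lambda^F})\cong(\mathcal{O}_{\widehat{A}},\mathcal{D}_{\widehat{A}})$ set up in the discussion preceding the statement, with $\mathcal{D}_\Lambda$ tracked through via the factor map $\pi_\Lambda$, and then simply asserts the compatibility of the gauge actions with potential. Your explicit verification of that last point on the generators $S_\alpha=\sum_i S_{(\alpha,i)}$, via the observation that the diagonal unitary $\exp(2\pi\sqrt{-1}(f\circ\pi)t)$ lives in the common copy of $C(X_\Lambda)$ on both sides, is exactly the detail the paper leaves implicit.
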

By applying Theorem  \ref{thm:main}, we know the following corollary.
\begin{corollary}\label{cor:5.2}
Let $\Lambda_1$ and $\Lambda_2$ be irreducible nontrivial sofic shifts.
Let $\widehat{A}_1$ and $\widehat{A}_2$ be their transition matrices 
for their left Fischer cover graphs for the sofic shifts 
$\Lambda_1$ and $\Lambda_2$ defined by the formulas \eqref{eq:5.1}, respectively.
Then the right one-sided sofic shifts 
$X_{\Lambda_1}$ and $X_{\Lambda_2}$ are topologically conjugate if and only if
there exists an isomorphism 
$\Phi:\mathcal{O}_{\widehat{A}_1}\longrightarrow \mathcal{O}_{\widehat{A}_2}$
of the simple purely infinite Cuntz--Krieger algebras such that
$\Phi(C(X_{\Lambda_1})) = C(X_{\Lambda_2}))$ and
\begin{equation*}
\Phi\circ \rho^{\widehat{A}_1,f}_t = \rho^{\widehat{A}_2,\Phi(f)}_t \circ\Phi
\qquad \text{ for all }\quad f \in C(X_{\Lambda_1},\Z), \quad t \in \T.
\end{equation*} 
\end{corollary}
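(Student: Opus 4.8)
The plan is to transport the characterization of Theorem \ref{thm:main} through the identifications supplied by Proposition \ref{prop:5.1}, which for each irreducible nontrivial sofic shift $\Lambda$ realizes the pair $(\OLmin,\DLmin)$ together with all gauge actions $\rho^{\Lambda,f}$ as the Cuntz--Krieger pair $(\mathcal{O}_{\widehat{A}},\mathcal{D}_{\widehat{A}})$ carrying the actions $\rho^{\widehat{A},f}$. Since an irreducible nontrivial sofic shift is in particular a normal subshift, Theorem \ref{thm:main} applies verbatim to $\Lambda_1$ and $\Lambda_2$, and the whole statement reduces to a routine conjugation once the two diagrams are in place. Write $\Phi_{\Lambda_1}:\OLamonemin\to\mathcal{O}_{\widehat{A}_1}$ and $\Phi_{\Lambda_2}:\OLamtwomin\to\mathcal{O}_{\widehat{A}_2}$ for the isomorphisms of Proposition \ref{prop:5.1}; each carries $\mathcal{D}_{\Lambda_i}$ onto $C(X_{\Lambda_i})$ by the canonical identification and satisfies $\Phi_{\Lambda_i}\circ\rho^{\Lambda_i,f}_t=\rho^{\widehat{A}_i,f}_t\circ\Phi_{\Lambda_i}$ for all $f\in C(X_{\Lambda_i},\Z)$.

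For the forward implication, suppose $X_{\Lambda_1}$ and $X_{\Lambda_2}$ are topologically conjugate. Theorem \ref{thm:main} produces an isomorphism $\Phi':\OLamonemin\to\OLamtwomin$ with $\Phi'(\mathcal{D}_{\Lambda_1})=\mathcal{D}_{\Lambda_2}$ and $\Phi'\circ\rho^{\Lambda_1,f}_t=\rho^{\Lambda_2,\Phi'(f)}_t\circ\Phi'$ for all $f\in C(X_{\Lambda_1},\Z)$. I would then set $\Phi=\Phi_{\Lambda_2}\circ\Phi'\circ\Phi_{\Lambda_1}^{-1}:\mathcal{O}_{\widehat{A}_1}\to\mathcal{O}_{\widehat{A}_2}$. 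That $\Phi(C(X_{\Lambda_1}))=C(X_{\Lambda_2})$ is immediate from $\Phi_{\Lambda_i}(\mathcal{D}_{\Lambda_i})=C(X_{\Lambda_i})$ together with $\Phi'(\mathcal{D}_{\Lambda_1})=\mathcal{D}_{\Lambda_2}$. For the gauge condition, fix $f\in C(X_{\Lambda_1},\Z)$ and push $\rho^{\widehat{A}_1,f}$ through $\Phi_{\Lambda_1}^{-1}$ (turning it into $\rho^{\Lambda_1,f}$), then through $\Phi'$ (turning it into $\rho^{\Lambda_2,\Phi'(f)}$), then through $\Phi_{\Lambda_2}$ (turning it into $\rho^{\widehat{A}_2,\Phi'(f)}$); since the identifications on the diagonal subalgebras are canonical, $\Phi'(f)$ is carried to $\Phi(f)$, and one arrives at $\Phi\circ\rho^{\widehat{A}_1,f}_t=\rho^{\widehat{A}_2,\Phi(f)}_t\circ\Phi$.

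For the converse, given an isomorphism $\Phi:\mathcal{O}_{\widehat{A}_1}\to\mathcal{O}_{\widehat{A}_2}$ with $\Phi(C(X_{\Lambda_1}))=C(X_{\Lambda_2})$ and the stated gauge-with-potential intertwining, I would reverse the conjugation and set $\Phi'=\Phi_{\Lambda_2}^{-1}\circ\Phi\circ\Phi_{\Lambda_1}$. The same three-step transport shows $\Phi'(\mathcal{D}_{\Lambda_1})=\mathcal{D}_{\Lambda_2}$ and $\Phi'\circ\rho^{\Lambda_1,f}_t=\rho^{\Lambda_2,\Phi'(f)}_t\circ\Phi'$ for every $f\in C(X_{\Lambda_1},\Z)$, so condition (ii) of Theorem \ref{thm:main} is satisfied and the theorem returns the topological conjugacy of $X_{\Lambda_1}$ and $X_{\Lambda_2}$.

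The argument is essentially bookkeeping, so I do not expect a serious obstacle; the only point demanding care is the matching of potentials across the three isomorphisms. Here one must observe that the restriction of $\Phi$ (respectively $\Phi'$) to $C(X_{\Lambda_1})$ is a $*$-isomorphism onto $C(X_{\Lambda_2})$, hence implemented by a homeomorphism of the underlying spaces, so it sends $\Z$-valued functions to $\Z$-valued functions and $\Phi(f)\in C(X_{\Lambda_2},\Z)$ is well defined and coincides with the image of $\Phi'(f)$ under the canonical identification $\Phi_{\Lambda_2}|_{\mathcal{D}_{\Lambda_2}}$. A second, purely formal observation is that the gauge conditions in Theorem \ref{thm:main} and in the corollary range over exactly the same family $f\in C(X_{\Lambda_1},\Z)$, rather than over all of $C(X_{\widehat{A}_1},\Z)$, which is precisely what makes the transport an equivalence rather than a one-way implication.
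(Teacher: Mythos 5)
Your proposal is correct and follows essentially the same route as the paper: the paper derives the corollary by combining Theorem \ref{thm:main} with the identifications of Proposition \ref{prop:5.1}, which is exactly the conjugation-by-$\Phi_{\Lambda_i}$ transport you carry out (the paper leaves the bookkeeping implicit, simply stating that the corollary follows by applying Theorem \ref{thm:main}). Your added care about potentials ranging over $C(X_{\Lambda_1},\Z)$ rather than $C(X_{\widehat{A}_1},\Z)$ matches the paper's setup, where $\rho^{\widehat{A},f}$ is taken for $f$ in the subalgebra $C(X_\Lambda)\subset C(X_{\widehat{A}})$ coming from the factor map.
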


\medskip

{\it Acknowledgments:}
This work was  supported by JSPS KAKENHI Grant Number 19K03537.



\end{document}